\newcommand \clb{\color{blue}}    
\newcommand \clr{\color{red}}     
\newcommand \clm {\color{magenta}}
\newcommand \for {\mbox{ for } }
\newtheorem{theorem}{Theorem}[section]
\newtheorem{lemma}[theorem]{Lemma}
\newtheorem{proposition}[theorem]{Proposition}
\newtheorem{property}[theorem]{Property}
\newtheorem{definition}[theorem]{Definition}
\newtheorem{remark}[theorem]{Remark}
\newenvironment{proof}{\par\noindent{{\bf Proof.}}}{\hfill$\Box$
\medskip}
\newtheorem{prop}[theorem]{Proposition}
\title{Improving semi-groups bounds with resolvent estimates}
\author{B.~Helffer\footnote{Bernard.Helffer@univ-nantes.fr}\\Laboratoire de Math\'ematiques Jean Leray,\\ Nantes Universit\'e and
  CNRS,\\ \and\\ J.~Sj\"ostrand\footnote{Johannes.Sjostrand@u-bourgogne.fr}\\
Institut de Math\'ematiques de Bourgogne, UMR 5584 CNRS,\\  Universit\'e Bourgogne Franche-Comt\'e,\\ 
F21000 Dijon Cedex France.}
\date{\today}
\begin{document}

\bibliographystyle{plain}

\maketitle
\begin{abstract}
The purpose of this paper is to revisit the proof of the
Gearhardt-Pr\"uss-Hwang-Greiner theorem for a semigroup $S(t)$, following the
general idea of the proofs that we have seen in the literature and to
get an explicit estimate on $\Vert S(t)
\Vert$ in terms of bounds on the resolvent of the generator. A first version of this paper 
 was presented by the two authors  in ArXiv (2010)  together with applications in semi-classical analysis   and a part of these results  has been published later  in two books written by the authors. Our aim is to present new improvements, partially motivated by a paper of D. Wei.  On the way we discuss optimization problems confirming the optimality of our results.
\end{abstract}
{\color{blue}\tableofcontents }
\section{Introduction}\label{int}\setcounter{equation}{0}

Let ${\cal H}$ be a complex Hilbert space and let $[0,+\infty [\ni
t\mapsto S(t)\in {\cal L}({\cal H},{\cal H})$ be a strongly continuous
semigroup with $S(0)=I$. Recall that by the Banach-Steinhaus theorem,
$\sup_J\Vert S(t)\Vert=:m(J)$ is bounded for every compact
interval $J\subset [0,+\infty [$. Using the semigroup property it
follows easily that there exist $M\ge 1$ and $\omega_0 \in \mathbb{R}$
such that $S(t)$ has the property 
\begin{equation}\label{int.1}P(M,\omega_0 ):\quad \Vert S(t)\Vert\le Me^{\omega_0 t},\ t\ge
0.
\end{equation}
\par Let $A$ be the generator of the semigroup (so that formally $S(t)=\exp tA$) and recall
(cf. \cite{EnNa07}, Chapter II or \cite{Paz}) that $A$ is closed and densely
defined. We also recall (\cite{EnNa07}, Theorem II.1.10) that 
\begin{equation}\label{int.2}
(z-A)^{-1}=\int_0^\infty S(t)e^{-tz}dt,\quad \Vert (z-A)^{-1}\Vert \le
\frac{M}{\Re z-\omega_0}\,,
\end{equation}
when $P(M,\omega_0 )$ holds and $z$ belongs to the open half-plane $\Re z> \omega_0 $. 

According to the Hille-Yosida theorem (\cite{EnNa07}, Th.~II.3.5),
the following three statements are equivalent when $\omega
\in \mathbb{R}$:
\begin{itemize}
\item $P(1,\omega )$ holds.
\item $\Vert (z-A)^{-1}\Vert \le (\Re z-\omega )^{-1}$, when $z\in
  \mathbb{C}$ and $\Re z> \omega $.
\item $\Vert (\lambda -A)^{-1}\Vert \le (\lambda -\omega )^{-1}$, when
  $\lambda \in ]\omega ,+\infty [$.
\end{itemize}
Here we may notice that we get from the special case $\omega =0$ to
general $\omega $ by passing from $S(t)$ to
$\widetilde{S}(t)=e^{-\omega t}S(t)$.

Also recall that there is a similar characterization of the property
$P(M,\omega )$ when $M>1$, in terms of the norms of all powers of the
resolvent. This is the Feller-Miyadera-Phillips theorem (\cite{EnNa07},
Th.~II.3.8). Since we need all powers of the resolvent, the practical
usefulness of that result is less evident. 

\par We next recall the Gearhardt-Pr\"uss-Hwang-Greiner theorem, see
\cite{EnNa07}, Theorem V.I.11, \cite{TrEm}, Theorem 19.1:
\begin{theorem}\label{int1}~
\par\noindent 
(a) Assume that $\Vert (z-A)^{-1}\Vert$ is uniformly bounded in the
half-plane $\Re z\ge \omega $. Then there exists a constant $M>0$ such
that $P(M,\omega )$ holds.\\
(b) If $P(M,\omega )$ holds, then for every $\alpha >\omega $, $\Vert
(z-A)^{-1}\Vert$ is uniformly bounded in the half-plane $\Re z\ge
\alpha $.
\end{theorem}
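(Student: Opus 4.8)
\textbf{Proof proposal for Theorem \ref{int1}.}

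For part (b), which is the easy direction, I would argue directly from the Laplace-transform formula \eqref{int.2}. If $P(M,\omega)$ holds and $\alpha>\omega$, then for any $z$ with $\Re z\ge\alpha$ one has $\Re z-\omega\ge\alpha-\omega>0$, so the integral $\int_0^\infty S(t)e^{-tz}\,dt$ converges absolutely and represents $(z-A)^{-1}$ with the bound $\Vert(z-A)^{-1}\Vert\le M/(\Re z-\omega)\le M/(\alpha-\omega)$. This is uniform in the closed half-plane $\Re z\ge\alpha$, which is exactly the claim.

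For part (a), the plan is to reduce to the case $\omega=0$ by replacing $S(t)$ with $\widetilde S(t)=e^{-\omega t}S(t)$, whose generator is $A-\omega$; thus assume $\Vert(z-A)^{-1}\Vert\le C$ for $\Re z\ge 0$. The key point is to recover $S(t)$ from the resolvent by an inverse-Laplace (Bromwich) contour integral. First I would establish, for $u$ in the domain of a suitable power of $A$ (say $u\in D(A^2)$, a dense set), the representation
\[
S(t)u=\frac{1}{2\pi i}\int_{\gamma-i\infty}^{\gamma+i\infty} e^{tz}(z-A)^{-1}u\,dz
\]
for $\gamma>\omega_0$, where $P(M_0,\omega_0)$ is the a priori bound from \eqref{int.1}; here working on $D(A^2)$ gives the extra decay $\Vert(z-A)^{-1}u\Vert=O(|z|^{-2})$ needed for absolute convergence. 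Then I would shift the contour from $\Re z=\gamma$ to $\Re z=0$: this is legitimate because $(z-A)^{-1}$ is holomorphic and uniformly bounded in the strip $0\le\Re z\le\gamma$ (using the hypothesis on $\Re z\ge 0$, the bound \eqref{int.2} near $\Re z=\gamma$, and an interpolation/Phragmén–Lindelöf or a direct resolvent-identity argument to cover the strip), and the horizontal pieces at $\Im z=\pm R$ tend to $0$ as $R\to\infty$ thanks to the $O(|z|^{-2})$ decay. One can even push slightly to $\Re z=-\varepsilon$ for small $\varepsilon>0$, since uniform boundedness on $\Re z\ge 0$ propagates to a strip $\Re z\ge-\varepsilon_0$ by a Neumann-series perturbation of the resolvent.

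Having the representation on $\Re z=-\varepsilon$, I would estimate: $\Vert S(t)u\Vert\le\frac{1}{2\pi}e^{-\varepsilon t}\int_{-\infty}^{\infty}\Vert(-\varepsilon+is-A)^{-1}u\Vert\,ds$, and since the integrand is $O(\langle s\rangle^{-2})$ on $D(A^2)$, the integral is finite and we get $\Vert S(t)u\Vert\le K_u e^{-\varepsilon t}$ for each such $u$. To upgrade this to a uniform operator bound $\Vert S(t)\Vert\le Me^{\omega t}=M$ (recall $\omega=0$ after rescaling), I would use the semigroup property together with uniform boundedness of $S(t)$ on compact $t$-intervals (Banach–Steinhaus): once $\Vert S(t_0)\Vert<1$ for some $t_0$ — which follows because the pointwise decay, combined with $\sup_{[0,T]}\Vert S(t)\Vert<\infty$ and a uniform-boundedness argument on the dense set, forces $\Vert S(t)\Vert\to 0$ — the bound $\Vert S(nt_0+r)\Vert\le\Vert S(t_0)\Vert^n\,m([0,t_0])$ gives exponential decay, hence $P(M,0)$, i.e. $P(M,\omega)$ in the original variables. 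The main obstacle is the contour-shift step: justifying that the resolvent is holomorphic and uniformly bounded throughout the strip $0\le\Re z\le\gamma$ (not merely on its boundary lines) and controlling the horizontal truncation errors; this is where the regularization $u\in D(A^2)$ and careful use of the a priori bound \eqref{int.2} are essential, and it is precisely the place where the paper presumably extracts an \emph{explicit} constant $M$ in terms of the resolvent bound $C$.
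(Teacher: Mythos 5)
Part (b) of your proposal is correct and is the standard argument from \eqref{int.2}. For part (a), however, there is a genuine gap at the very last step. After the contour shift you obtain $\Vert S(t)u\Vert\le K_u e^{-\varepsilon t}$ only for $u$ in the dense subspace $D(A^2)$, with a constant $K_u$ controlled at best by the graph norm of $u$. From this you cannot conclude that $\Vert S(t)\Vert\to 0$: the Banach--Steinhaus theorem requires pointwise boundedness of the family $\{e^{\varepsilon t}S(t)\}$ on a non-meager set, and a proper dense subspace such as $D(A^2)$ is meager; moreover, even genuine pointwise decay $\Vert S(t)u\Vert\to 0$ for \emph{every} $u\in{\cal H}$ (strong stability) does not imply $\Vert S(t)\Vert\to 0$ (uniform stability) --- this distinction is exactly the content of the Gearhardt--Pr\"uss theorem, and there are classical counterexamples (multiplication and weighted shift semigroups). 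So the sentence ``forces $\Vert S(t)\Vert\to 0$'' is where the proof breaks. The missing ingredient is a \emph{duality} step: one must estimate the scalar quantity $[S(t)u\,|\,v]_{\cal H}$ by combining an $L^2$-in-time bound for the forward solution $s\mapsto S(s)u$ with an $L^2$-in-time bound for the backward solution of the adjoint equation $(A^*+\partial_s)u^*=0$ with data $v$, both obtained from Plancherel and the resolvent hypothesis; a Cauchy--Schwarz inequality in the time variable then yields a bound of the form $t\,|[S(t)u|v]_{\cal H}|\le C\Vert u\Vert\,\Vert v\Vert$, which \emph{is} a uniform operator-norm estimate. This is precisely what the paper does in Section \ref{s2} with the constant flux $[u(s)|u^*(s)]_{\cal H}$ and the averaging weight $M(s)$ of mass one.

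There is also a secondary inaccuracy: for $u\in D(A^2)$ one has $(z-A)^{-1}u=z^{-1}u+z^{-1}(z-A)^{-1}Au$, so $\Vert (z-A)^{-1}u\Vert={\cal O}(|z|^{-1})$, not ${\cal O}(|z|^{-2})$; the Bromwich integral is therefore only conditionally convergent and the term $z^{-1}u$ must be subtracted off by hand (or one works with $(z-A)^{-2}$ and the identity $tS(t)u=\frac{1}{2\pi i}\int e^{tz}(z-A)^{-2}u\,dz$, which is in fact the version compatible with the duality step above). The contour shift itself is unproblematic --- the hypothesis already gives uniform boundedness of the resolvent on the whole half-plane $\Re z\ge 0$, hence on the strip, so no Phragm\'en--Lindel\"of argument is needed there. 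By contrast, the paper never forms a Bromwich integral: it works with the inhomogeneous equation $(A-\partial_t)\Phi u=-\Phi'u$ in exponentially weighted $L^2$ spaces, where Plancherel converts the resolvent bound directly into $\Vert\Phi u\Vert_{\omega\cdot}\le r(\omega)^{-1}\Vert\Phi'u\Vert_{\omega\cdot}$, and then performs the duality and averaging step; this route avoids all regularization by $D(A^2)$ and produces the explicit constants of Theorems \ref{int2} and \ref{Th1.2}.
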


\par The purpose of this paper is to revisit the proof of (a), following the
general idea of the proofs that we have seen in the literature and to
get an explicit $t$ dependent estimate on $e^{-\omega t}\Vert S(t)
\Vert$, implying explicit bounds on $M$. \\

 This idea is essentially to use that the resolvent and
  the inhomogeneous equation $(\partial _t-A)u=w$ in exponentially
  weighted spaces are related via
  Fourier-Laplace transform and we can use Plancherel's formula. Variants of
  this simple idea have also been used in more concrete
  situations. See \cite{BuZw, GGN, Hi, Sch09}  and a very complete
  overview of the possible applications in \cite{CST}. In this
    paper, we will obtain general results of the form:

    \medskip\par\noindent {\it If $\|
      S(t)\|\le m(t)$ for some positive function $m$, and if we have a
      certain bound on the resolvent of $A$, then $\|
      S(t)\|\le \widetilde{m}(t)$ and hence $\|
      S(t)\|\le \min (m(t),\widetilde{m}(t))$ for a new function
      $\widetilde{m}$ that can be explicitly described.}\\

Note that we can extend the conclusion of (a). If the
property (a) is true for some $\omega$ then it is automatically true
 for some  $\omega '<\omega $. We recall indeed 
  the following
\begin{lemma}\label{int01}~\\ If for some $r(\omega )>0$, $\Vert (z-A)^{-1}\Vert \le \frac{1}{r(\omega )}$
  for $\Re z>\omega $, then for every $\omega '\in ]\omega
  -r(\omega ),\omega ]$ we have 
$$
\Vert (z-A)^{-1}\Vert \le \frac{1}{r(\omega )-(\omega -\omega
  ')},\ \Re z>\omega '.
$$
\end{lemma}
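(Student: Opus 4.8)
The plan is to exploit the first resolvent identity (Neumann series) to continue the resolvent from the half-plane $\Re z > \omega$ into the strip $\omega - r(\omega) < \Re z \le \omega$. Fix $z_0$ with $\Re z_0 > \omega$ and consider the resolvent $(z_0-A)^{-1}$, which by hypothesis has norm at most $1/r(\omega)$. For a point $z$ with $|z - z_0| < r(\omega)$, the operator $z - A = (z_0 - A) + (z - z_0) = (z_0-A)\bigl(I + (z-z_0)(z_0-A)^{-1}\bigr)$ is invertible, since $\|(z-z_0)(z_0-A)^{-1}\| \le |z-z_0|/r(\omega) < 1$, and then
$$
(z-A)^{-1} = \Bigl(I + (z-z_0)(z_0-A)^{-1}\Bigr)^{-1}(z_0-A)^{-1},
\qquad
\|(z-A)^{-1}\| \le \frac{1}{r(\omega) - |z - z_0|}.
$$

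Now I would take any $\omega' \in ]\omega - r(\omega), \omega]$ and any $z$ with $\Re z > \omega'$, and choose the base point $z_0$ appropriately: set $z_0 = z + (\omega - \Re z) + \varepsilon$ for small $\varepsilon>0$ if $\Re z \le \omega$ — i.e.\ slide $z_0$ horizontally to the right until it just crosses the line $\Re z = \omega$ — so that $\Re z_0 > \omega$ and $|z - z_0| = \omega - \Re z + \varepsilon < \omega - \omega' + \varepsilon$. (If $\Re z > \omega$ already, there is nothing to do since the original bound is at least as good.) Plugging into the estimate above gives $\|(z-A)^{-1}\| \le 1/(r(\omega) - (\omega - \Re z) - \varepsilon)$, and letting $\varepsilon \to 0$, then bounding $\omega - \Re z \le \omega - \omega'$, yields
$$
\|(z-A)^{-1}\| \le \frac{1}{r(\omega) - (\omega - \omega')}, \qquad \Re z > \omega',
$$
as claimed; note the denominator is positive precisely because $\omega' > \omega - r(\omega)$.

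There is essentially no obstacle here: the only point requiring a little care is the bookkeeping on the horizontal shift of the base point and the observation that $|z-z_0|$ can be made as close as we like to $\omega - \Re z$, which is itself $< \omega - \omega'$. One can even dispense with the $\varepsilon$ by noting that the conclusion is a closed condition and the hypothesis gives the bound on the open half-plane; I would keep the $\varepsilon$ for cleanliness. This is the standard fact that the resolvent set is open with the resolvent norm controlling the radius of the disk of analyticity, specialized to the quantitative uniform bound assumed in the statement.
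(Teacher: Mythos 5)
Your proof is correct. The paper states Lemma \ref{int01} as a recalled fact without supplying a proof, and your Neumann-series argument --- extending the resolvent from a base point $z_0$ with $\Re z_0>\omega$ into the disk $|z-z_0|<r(\omega)$ with the bound $\|(z-A)^{-1}\|\le (r(\omega)-|z-z_0|)^{-1}$, then sliding $z_0$ horizontally --- is exactly the standard argument the authors are implicitly invoking; the bookkeeping with $\varepsilon$ and the passage $\omega-\Re z<\omega-\omega'$ are handled correctly.
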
 
\medskip
Let
$$\omega _1=\inf\{\omega \in \mathbb R ; \, \{z\in \mathbb C ; \Re z>\omega
\}\subset \rho (A) \hbox{ and }\sup_{\Re z>\omega
}\Vert (z-A)^{-1}\Vert <\infty \}.$$
For $\omega >\omega _1$, we may define $r(\omega )$ by
\begin{equation}\label{defr}
\frac{1}{r(\omega )}=\sup_{\Re z>\omega }\Vert (z-A)^{-1}\Vert.
\end{equation}
Then $r(\omega )$ is an increasing function of $\omega $;  for every $\omega \in ]\omega _1,\infty [$, we have $\omega
-r(\omega )\ge \omega _1$ and for $\omega '\in [\omega -r(\omega
),\omega ]$ we have 
$$
r(\omega ')\ge r(\omega )-(\omega -\omega ').
$$

\begin{remark}~\\
Under the assumption $P(M,\omega _0)$ in (\ref{int.1}), we already
know from (\ref{int.2}) that\\
$\Vert (z-A)^{-1}\Vert$ is uniformly bounded in the half-plane $\Re
z\ge \beta $, if $\beta >\omega _0$. If $\alpha \le \omega _0$, we
see that $\Vert (z-A)^{-1}\Vert$ is uniformly bounded in the
half-plane $\Re z\ge \alpha $, provided that 
\begin{itemize}
\item we have this uniform boundedness on the line $\Re z=\alpha $,
\item $A$ has no spectrum in the half-plane $\Re z\ge \alpha $,
\item $\Vert (z-A)^{-1}\Vert$ does not grow too wildly in the strip
  $\alpha \le \Re z \le \beta $: $$\Vert (z-A)^{-1}\Vert \le {\cal
    O}(1) \exp ({\cal O}(1)\exp (k|\Im z|))\,,$$ where $k<\pi /(\beta
  -\alpha )$.

\end{itemize}
We then also have
\begin{equation}\label{int.3}
\sup_{\Re z\ge \alpha }\Vert (z-A)^{-1}\Vert = \sup_{\Re z=\alpha
}\Vert (z-A)^{-1}\Vert .
\end{equation}
This follows from the subharmonicity of $\log  ||(z-A)^{-1}||$, basically
Hadamard's theorem (or the one of Phragm\'en-Lindel\"of in
exponential coordinates). 
\end{remark}

\par The main result in \cite{HelSj} was:
\begin{theorem}\label{int2}~\\
We make the assumptions of Theorem \ref{int1}, (a) and let $r(\omega
)>0$ be as in (\ref{defr}). 
Let $m(t)\ge \Vert S(t)\Vert$ be a continuous positive function.
 Then
for all $t,a,b>0$, such that $t \geq  a+b$, we have
\begin{equation}\label{int.4}  \Vert S(t)\Vert \le \frac{e^{\omega t}} {r(\omega )\Vert
    \frac{1}{m}\Vert_{e^{-\omega \cdot }L^2(0,a)}\Vert
    \frac{1}{m}\Vert_{e^{-\omega \cdot }L^2(0,b)}}.
    \end{equation}
\end{theorem}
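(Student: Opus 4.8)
The plan is to reduce to the case $\omega=0$ and then, exactly as announced in the introduction, to relate the inhomogeneous equation $(\partial_t-A)u=w$ to the resolvent through the Fourier--Laplace transform and Plancherel's formula. Replacing $S(t)$ by $e^{-\omega t}S(t)$, whose generator is $A-\omega$ and whose resolvent at $z$ is the resolvent of $A$ at $z+\omega$, reduces the statement to $\omega=0$; there $\Vert(z-A)^{-1}\Vert\le 1/r$ for $\Re z\ge0$ (where $r:=r(0)$, the supremum over $\Re z>0$ extending to the line $\Re z=0$ by continuity of the resolvent on the closed half-plane), and $\Vert S(t)\Vert\le m(t)$ with $m$ continuous and positive. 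By Lemma \ref{int01} the resolvent stays bounded, say by $2/r$, on $\{\Re z>-r/2\}$, so Theorem \ref{int1}(a) applied with $\omega$ replaced by $-r/2$ gives $\Vert S(t)\Vert\le M'e^{-rt/2}$ for some $M'$. In particular $t\mapsto S(t)u_0$ lies in $L^1\cap L^2\big((0,\infty);{\cal H}\big)$ for every $u_0\in{\cal H}$, and $(i\xi-A)^{-1}=\int_0^\infty e^{-i\xi r}S(r)\,dr$ for every real $\xi$.

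Next I would fix $T\ge a+b$ and $u_0\in{\cal H}$ and localize in time. Choose a Lipschitz function $\chi\colon[0,\infty)\to[0,1]$ with $\chi(0)=0$ and $\chi\equiv1$ on $[a,\infty)$, and set $u(t)=\chi(t)S(t)u_0$ and $w(t)=\chi'(t)S(t)u_0$. Then $w$ is supported in $[0,a]$, and since $S(t-s)S(s)u_0=S(t)u_0$ and $\int_0^t\chi'(s)\,ds=\chi(t)$ one has $u(t)=\int_0^tS(t-s)w(s)\,ds$. Taking Fourier transforms in $t$ and inserting the resolvent formula above — Fubini being legitimate since $w$ is compactly supported and $\Vert S(\cdot)\Vert\in L^1$ — yields $\widehat u(\xi)=(i\xi-A)^{-1}\widehat w(\xi)$, hence $\Vert\widehat u(\xi)\Vert\le r^{-1}\Vert\widehat w(\xi)\Vert$ for every $\xi$. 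Plancherel's formula then gives the key estimate
\[
\int_0^\infty\Vert u(t)\Vert^2\,dt\ \le\ \frac1{r^2}\int_0^\infty\Vert w(t)\Vert^2\,dt\ =\ \frac1{r^2}\int_0^a|\chi'(t)|^2\,\Vert S(t)u_0\Vert^2\,dt\ \le\ \frac1{r^2}\Big(\int_0^a|\chi'(t)|^2m(t)^2\,dt\Big)\Vert u_0\Vert^2 .
\]
It then remains to extract $\Vert S(T)u_0\Vert$ and to optimize over $\chi$. Since $T-b\ge a$, we have $u(t)=S(t)u_0$ on $[T-b,T]$, so $S(T)u_0=S(T-t)u(t)$ there and $\Vert u(t)\Vert\ge\Vert S(T)u_0\Vert/m(T-t)$; integrating over $[T-b,T]$ and substituting $s=T-t$,
\[
\int_0^\infty\Vert u\Vert^2\ \ge\ \int_{T-b}^T\Vert u(t)\Vert^2\,dt\ \ge\ \Vert S(T)u_0\Vert^2\int_0^b\frac{ds}{m(s)^2}\ =\ \Vert S(T)u_0\Vert^2\,\Big\Vert\tfrac1m\Big\Vert_{L^2(0,b)}^2 .
\]
On the other hand, for Lipschitz $\chi$ with $\chi(0)=0$, $\chi(a)=1$ the Cauchy--Schwarz inequality gives $1=\big(\int_0^a\chi'\big)^2\le\big(\int_0^a|\chi'|^2m^2\big)\big(\int_0^am^{-2}\big)$, so $\int_0^a|\chi'|^2m^2$ may be taken equal to its minimum $\Vert1/m\Vert_{L^2(0,a)}^{-2}$, attained by $\chi(t)=\int_0^tm^{-2}\big/\int_0^am^{-2}$ on $[0,a]$. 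Feeding this and the previous display into the key estimate gives $\Vert S(T)u_0\Vert\le\Vert u_0\Vert\,\big/\,\big(r\,\Vert1/m\Vert_{L^2(0,a)}\,\Vert1/m\Vert_{L^2(0,b)}\big)$; taking the supremum over $u_0$ and undoing the reduction to $\omega=0$ — which restores $r(\omega)$ in place of $r$, produces the factor $e^{\omega t}$, and turns the $L^2(0,a)$- and $L^2(0,b)$-norms of $1/m$ into the weighted norms of (\ref{int.4}) — yields the theorem.

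The step I expect to be the main obstacle is the passage through the Fourier--Laplace transform: one must secure a priori that $u,w\in L^2\big((0,\infty);{\cal H}\big)$ (hence the appeal to Lemma \ref{int01} and Theorem \ref{int1}(a) for exponential decay of $S$ after the reduction), justify Fubini in the computation of $\widehat u$, and use that the bound $\Vert(z-A)^{-1}\Vert\le 1/r$ valid for $\Re z>0$ also controls $\Vert(i\xi-A)^{-1}\Vert$ on the imaginary axis — which holds by continuity of the resolvent up to $\{\Re z=0\}$, or, if one prefers to avoid the boundary, by working at $\Re z=c>0$ in the weighted space $e^{-c\cdot}L^2$ and letting $c\downarrow0$ by monotone convergence. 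The remaining ingredients — the two applications of Cauchy--Schwarz, one folded into the optimization over $\chi$ (supplying $\Vert1/m\Vert_{L^2(0,a)}$) and one in the lower bound for $\int_{T-b}^T\Vert u\Vert^2$ (supplying $\Vert1/m\Vert_{L^2(0,b)}$), together with the intervening algebra — are routine.
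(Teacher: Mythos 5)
Your proof is correct, and it reaches (\ref{int.4}) by a route that is genuinely different from the one taken in this paper (it is essentially the original argument of \cite{HelSj}, of which the present paper gives a strengthening). The first half is common to both: cut off the trajectory by a weight $\chi$ vanishing at $0$, pass $(\partial_t-A)u=w$ through the Fourier--Laplace transform and Plancherel to get $\|u\|_{L^2}\le r(\omega)^{-1}\|w\|_{L^2}$, and optimize $\int_0^a|\chi'|^2m^2$ by Cauchy--Schwarz to produce $\Vert 1/m\Vert_{L^2(0,a)}^{-2}$ --- this last step is exactly the computation of Subsection \ref{ss3.6} (Proposition \ref{prop3.6}). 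Where you diverge is the passage from the $L^2$ bound to the bound at time $t$: you use the semigroup property on the terminal window $[t-b,t]$, namely $\Vert S(t)u_0\Vert\le m(t-s)\Vert u(s)\Vert$, whereas the paper introduces a backward solution $u^*$ of the adjoint equation, exploits the constancy of the flux $[u|u^*]_{\cal H}$, and averages against a mass-one density $M$ (Subsection \ref{LL}). Your version is shorter and entirely elementary, but it discards the information contained in $(r(\omega)^2\Phi^2-\Phi'^2)_+$ on $[a,t-b]$; the paper's symmetric flux setup is precisely what retains it, yielding Theorem \ref{Th1.2} and then the Wei-type improvement $e^{-r(\omega)(t-a-b)}$ of Theorem \ref{th1.7}, of which Theorem \ref{int2} is the weakening obtained by dropping that factor. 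One further (mild) difference: you secure the a priori integrability of $t\mapsto S(t)u_0$ by invoking Lemma \ref{int01} together with the qualitative statement of Theorem \ref{int1}(a); this is legitimate since that theorem is quoted as known and no circularity results, but it means your argument does not by itself reprove the qualitative Gearhart--Pr\"uss theorem, whereas the paper works directly with $u\in L^2_{\omega\cdot}$ and a limiting procedure on $\Phi$.
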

Here the norms are always the natural ones obtained from ${\cal H}$,
$L^2$, thus for instance 
$\Vert S(t)\Vert= \Vert S(t)\Vert_{{\cal L}({\cal H},{\cal H})}$, if
$u$ is a function on $\mathbb{R}$ with values in $\mathbb{C}$ or in
${\cal H}$, $\Vert u\Vert$  denotes the natural $L^2$ norm, when
the norm is taken over a subset $J$ of $\mathbb{R}$, this is indicated
with a ``$L^2(J)$''. In (\ref{int.4}) we also have the natural norm
in the exponentially weighted space $e^{-\omega \cdot }L^2(0,a)$ and
similarly with $b$ instead of $a$; $\Vert
f\Vert_{e^{-\omega \cdot }L^2(0,a)}=\Vert e^{\omega \cdot }f(\cdot
)\Vert_{L^2(0,a)}$. \\

The proof of these theorems was first presented in \cite{HelSj} and
later published in the books of the authors.
In \cite{W}, Dongyi Wei, motivated by our first version \cite{HelSj} has proved the following theorem:\\
\begin{theorem}\label{th3.2}
 Let $H=-A$ be an $m$-accretive operator in a Hilbert space $\mathcal H$. Then we have, 
\begin{equation}\label{eq:w}
||S(t) || \leq e^{- r(0)t +\frac \pi 2}\,,\, \forall t\geq 0\,.
\end{equation}
\end{theorem}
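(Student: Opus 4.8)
I would obtain \eqref{eq:w} from the resolvent bound defining $r(0)$, in the spirit of the proof of Theorem~\ref{int2}, exploiting the two features special to an $m$-accretive generator. First a reduction: if $\|(z-A)^{-1}\|$ is not uniformly bounded on $\{\Re z>0\}$ then $r(0)=0$ and the claim is trivial, since $\|S(t)\|\le 1\le e^{\pi/2}$; so assume $r(0)>0$. Being $m$-accretive, $H=-A$ satisfies $\|(\lambda-A)^{-1}\|\le 1/\lambda$ for $\lambda>0$, hence $P(1,0)$ holds (so $\|S(t)\|\le 1$), and $\|(z-A)^{-1}\|\le 1/\Re z$ on $\{\Re z>0\}$. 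Since $r(0)>0$, Lemma~\ref{int01} at $\omega=0$ gives, for every $\mu\in(0,r(0))$, that $\{\Re z>-\mu\}\subset\rho(A)$ and $r(-\mu)\ge r(0)-\mu$. Thus the hypotheses of Theorem~\ref{int1}(a) and of Theorem~\ref{int2} are available at every level $\omega=-\mu$, $0<\mu<r(0)$, and the contraction property lets me use $m\equiv 1$ as an admissible majorant of $\|S(t)\|$.

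I would then apply Theorem~\ref{int2} with $\omega=-\mu$ and $m\equiv 1$. Here $\|1/m\|_{e^{-\omega\cdot}L^2(0,a)}^2=\int_0^a e^{-2\mu s}\,ds=(1-e^{-2\mu a})/(2\mu)$, so that
\[
\|S(t)\|\le\frac{2\mu\,e^{-\mu t}}{(r(0)-\mu)\sqrt{(1-e^{-2\mu a})(1-e^{-2\mu b})}}\qquad(t\ge a+b).
\]
Taking $a=b=t/2$ and then optimizing over $\mu$ already yields the sharp exponential rate $r(0)$, but with a prefactor growing polynomially in $t$ (essentially a multiple of $r(0)t$). Equivalently, one can run the underlying Fourier--Plancherel argument directly: with $v(t)=e^{\mu t}S(t)u_0$ and a time-cutoff $\chi$, $\chi(0)=0$, one has $\widehat{\chi v}=(i\xi-\mu-A)^{-1}\widehat{\dot\chi v}$, and Plancherel together with $\|(i\xi-\mu-A)^{-1}\|\le(r(0)-\mu)^{-1}$ gives $\|\chi v\|_{L^2}\le(r(0)-\mu)^{-1}\|\dot\chi v\|_{L^2}$; combined with $\|S(t)\|\le 1$ and $\|S(\tau)\|\le 1$ for $0\le\tau\le t$, this reproduces the same family of estimates and makes the relevant extremal cutoff explicit.

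To reach the sharp constant $e^{\pi/2}$ I would bootstrap: each improved estimate is a new admissible majorant $m$, which fed back into Theorem~\ref{int2} (or into the Plancherel step) produces a better one. The qualitative exponential decay from Theorem~\ref{int1}(a) at any $\omega=-\mu$, $0<\mu<r(0)$, makes all the weighted integrals $\int_0^a e^{-2\mu s}m(s)^{-2}\,ds$ finite, so the iteration is well defined and non-increasing, and its limit is the self-consistent bound. The number $\pi/2$ should come out of the limiting joint optimization over $\mu$ and over the cutoff: in the extremal regime the cutoff is a half-sine arch on an interval glued to a slowly decaying tail, and the associated Rayleigh quotient, together with the optimal choice of $\mu$ (close to $r(0)$), produces $\pi/2$; this is one of the optimization problems announced in the introduction. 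The hard part, I expect, is precisely this last step: one application only gives the correct rate up to a polynomial loss, and reducing the accumulated multiplicative constant to exactly $e^{\pi/2}$ --- identifying the self-consistent family of weights and the extremal cutoff, and performing the variational computation exactly --- is where the real content lies, whereas the mere convergence of the bootstrap is soft.
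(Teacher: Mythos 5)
Your reduction and your single application of Theorem \ref{int2} at the shifted abscissa $\omega=-\mu$ are correct as far as they go: optimizing $\mu$ and $a=b=t/2$ does give the sharp rate $r(0)$, but only with a prefactor of order $r(0)t$ (take $\mu=r(0)-1/t$). The entire content of the theorem, however, is the universal constant $e^{\pi/2}$ valid for all $t$, and the proposal does not reach it. The bootstrap you invoke to close this gap is not merely ``soft'': feeding $m_1(t)=\min\bigl(1,\,2er(0)\,t\,e^{-r(0)t}\bigr)$ back into Theorem \ref{int2} at $\omega=0$ gives $\Vert 1/m_1\Vert^2_{L^2(0,t/2)}\sim e^{r(0)t}/\bigl(2e^2r(0)^3t^2\bigr)$ and hence a new bound of order $t^2e^{-r(0)t}$, which is \emph{worse} than $m_1$ for large $t$; the iteration stalls and never reduces the prefactor to a constant. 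The obstruction is structural: Theorem \ref{int2} (and the Plancherel step in the form you state it, $\Vert\chi v\Vert\le (r-\mu)^{-1}\Vert\dot\chi v\Vert$) only uses the weak consequence $\Vert r\,1_{\Phi=1}u\Vert\le\Vert\Phi'u\Vert$, whereas the sharp constant requires retaining the full sign-split inequality \eqref{L2.2}, $\Vert(r^2\Phi^2-\Phi'^2)_+^{1/2}u\Vert\le\Vert(r^2\Phi^2-\Phi'^2)_-^{1/2}u\Vert$. No amount of iterating Theorem \ref{int2} or shifting $\omega$ recovers the discarded positive part.

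The paper's argument is a \emph{single} (not iterated) application of Theorem \ref{Th1.2} with $(\epsilon_1,\epsilon_2)=(+,-)$ at $\omega=0$: one takes $\Phi$ proportional to the minimizer of $\int_0^a(u'^2-u^2)_+\,ds$ on $[0,a]$ (for $m=1$, $r(0)=1$ this is $\sin s$), then $\Phi=e^{s}$ on the middle segment, where $\Phi^2-\Phi'^2\equiv0$ contributes the growth $e^{t-a-b}$ to the denominator at zero cost in the numerator, and symmetrically for $\Psi$. The constant then reduces to the two variational problems of Proposition \ref{propminmax}, whose exact values $\psi_0(a)m(a)^2$ and $m(b)^{-2}/\psi_0(b)$ are computed via the Riccati equation for $\psi_0=u_0'/u_0$ and the Dirichlet--Robin problem; the optimum sits at the critical length $a=b=a^*=\pi/4$ where $\psi_0(a^*)=\cot a^*=1$, giving $\exp(2a^*)=e^{\pi/2}$ (Theorem \ref{prop4.9}), while $t\le\pi/2$ is trivial by contractivity. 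Your ``half-sine arch'' heuristic does point at the correct extremal profile, but identifying it, proving its optimality, and extracting $\pi/2$ from it is precisely the argument that is missing, and it cannot be supplied from Theorem \ref{int2} alone.
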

Our aim is to deduce and improve these two theorems as a consequence
of a unique basic estimate that we present now.  Let $\Phi$ satisfy
\begin{equation}\label{defpropchi}
0\le \Phi \in C^1([0,+\infty [) \mbox{  with } \Phi (0)=0 \mbox{ and }  \Phi (t)>0
\mbox{ for } t>0\,,
\end{equation}
and assume that $\Psi$ has the same properties.  
(By a density argument we can replace $C^1([0,+\infty [)$ in
(\ref{defpropchi}) by the space of locally Lipschitz functions on
$[0,+\infty [$.)  For $t>0$, let $\iota_t$
be the reflection with respect to $t/2$: $\iota _tu(s)=u(t-s)$. With
this notation, we have the following theorem.
 \begin{theorem}\label{Th1.2} Under the assumptions of Theorem
   \ref{int2},  for any $\Phi$ and $\Psi$ satisfying
   \eqref{defpropchi} and for any $\epsilon_1,\epsilon_2 \in \{-,+\}$, we have 
\begin{equation}\label{LL.5}
 || S (t)|| _{\mathcal L(\cal H)}\le  e^{\omega t}\frac{\|(r(\omega
  )^2\Phi^2 -\Phi '^2)^{\frac{1}{2}}_-\,m\|_{e^{\omega \cdot }L^2([0,t[) } \|(r(\omega)^2 \Psi
  ^2- \Psi '^2)^{\frac{1}{2}}_-\, m\|_{e^{\omega \cdot }L^2([0,t[) }}
{\int_0^t (r(\omega
    )^2\Phi^2 -\Phi '^2)^{\frac{1}{2}}_{\epsilon_1}(r(\omega )^2\iota_t \Psi^2-\iota_t \Psi
    '^2)^{\frac{1}{2}}_{\epsilon_2} ds}\,.
\end{equation}
\end{theorem}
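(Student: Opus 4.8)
The plan is to work in the exponentially weighted space, reducing to the case $\omega=0$ by the substitution $S(t)\leadsto e^{-\omega t}S(t)$, $A\leadsto A-\omega$, so that $\|(z-A)^{-1}\|\le 1/r(0)$ for $\Re z>0$ with $r(0)=r(\omega)$. Fix $t>0$ and $u\in\mathcal H$, and consider the exact solution of the inhomogeneous problem on $[0,t]$ built from the semigroup: roughly, one wants to relate $S(t)u$ to $S(\cdot)u$ cut off by $\Phi$ on the left end and by $\iota_t\Psi$ on the right end. Concretely, set $v(s)=\Phi(s)S(s)u$ on $[0,t]$ (extended by $0$ for $s<0$) so that $(\partial_s-A)v = \Phi'(s)S(s)u =: w(s)$, a function supported in $[0,t]$. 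Apply the Fourier–Laplace transform in $s$; by Plancherel, $\widehat v(\tau)=(i\tau-A)^{-1}\widehat w(\tau)$ for real $\tau$, hence
\[
\int_{\mathbb R}\|v(s)\|^2\,ds=\frac1{2\pi}\int_{\mathbb R}\|\widehat v(\tau)\|^2\,d\tau
\le \frac1{r(0)^2}\cdot\frac1{2\pi}\int_{\mathbb R}\|\widehat w(\tau)\|^2\,d\tau
=\frac1{r(0)^2}\int_{\mathbb R}\|w(s)\|^2\,ds.
\]
Since $\|v(s)\|=\Phi(s)\|S(s)u\|$ and $\|w(s)\|\le|\Phi'(s)|\,\|S(s)u\|$ (here one must be a touch more careful because $A$ is unbounded, but the standard mollification/approximation argument, or the observation that $(\partial_s-A)(\Phi S u)$ is genuinely $\Phi' Su$ in the weak sense, handles it), this gives
\[
\int_0^t(\Phi(s)^2-\text{(something)})\,\|S(s)u\|^2\,ds\le 0,
\]
and after moving terms, $\int_0^t(r(0)^2\Phi^2-\Phi'^2)(s)\,\|S(s)u\|^2\,ds\le 0$. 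Splitting the weight into its positive and negative parts, $r(0)^2\Phi^2-\Phi'^2=(r(0)^2\Phi^2-\Phi'^2)_+-(r(0)^2\Phi^2-\Phi'^2)_-$, yields the first key inequality
\[
\int_0^t(r(0)^2\Phi^2-\Phi'^2)_+(s)\,\|S(s)u\|^2\,ds\le\int_0^t(r(0)^2\Phi^2-\Phi'^2)_-(s)\,\|S(s)u\|^2\,ds.
\]

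Next I would run the same argument on the reflected interval, i.e. apply it to the backward semigroup or, equivalently, use the cutoff $\iota_t\Psi$ near $s=t$; using $\|S(t)u\|\le\|S(t-s)\|\,\|S(s)u\|$ is the wrong direction, so instead one writes $S(t)u=S(t-s)S(s)u$ and estimates $\|S(t)u\|\le\|S(t-s)u'\|$-type bounds — more precisely one obtains a companion inequality with $\Psi$, $\iota_t\Psi$ in place of $\Phi$, giving control of $\int_0^t(r(0)^2\iota_t\Psi^2-\iota_t\Psi'^2)_+(s)\,\|S(s)u\|^2\,ds$ from below by the corresponding $(\cdot)_-$ integral, OR directly a pointwise-in-$t$ bound. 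The cleanest route: for the right end, run the weighted-resolvent estimate for the semigroup $S$ shifted so that the cutoff $\Psi$ lives near $0$ again but multiplies $S(t-s)$; since $\|S(t-s)\,S(s)u\| = \|S(t)u\|$ is constant in $s$, one gets
\[
\|S(t)u\|^2\int_0^t(r(0)^2\Psi^2-\Psi'^2)_+(t-s)\,\frac{ds}{\|S(s)u\|^2}\cdot(\dots)
\]
— this is where the factor $\|S(t)u\|$ will eventually be isolated.

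The final assembly is a Cauchy–Schwarz pairing of the two inequalities. Writing $f(s)=(r(0)^2\Phi^2-\Phi'^2)^{1/2}_{\epsilon_1}(s)$ and $g(s)=(r(0)^2\iota_t\Psi^2-\iota_t\Psi'^2)^{1/2}_{\epsilon_2}(s)$, one estimates $\int_0^t fg\,ds$ by splitting it against $\|S(s)u\|$ and its reciprocal, $\int_0^t fg\,ds=\int_0^t\big(f\,m/\|S(s)u\|\big)\big(g\,\|S(s)u\|/m\big)\,ds$ — wait, more to the point: the constant $\|S(t)u\|$ appears by combining $\|S(t)u\|\le\|S(t-s)\|\,\|S(s)u\|$ with $m\ge\|S(\cdot)\|$ on one factor, then Cauchy–Schwarz converts the cross term $\int_0^t fg$ into the product of the two $L^2$ norms $\|f\,m\|_{L^2}\|g\,m\|_{L^2}$ controlled above, and dividing through by $\int_0^t fg\,ds$ produces exactly \eqref{LL.5}. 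The main obstacle, and the place I would spend the most care, is the first step's rigor: justifying the Fourier–Laplace/Plancherel identity $\widehat v=(i\tau-A)^{-1}\widehat w$ when $A$ is merely closed and densely defined (not bounded), and verifying that multiplication by the $C^1$ (or Lipschitz) cutoff $\Phi$ keeps $S(\cdot)u$ in the domain of the relevant operators in the weak sense so that $(\partial_s-A)(\Phi Su)=\Phi'Su$ holds as an $\mathcal H$-valued $L^2$ identity; the density reduction to $u\in D(A)$ and a mollification of $\Phi$ are the standard tools here. Everything after that — the positive/negative-part bookkeeping, the reflection symmetry giving the $\Psi$-inequality, and the Cauchy–Schwarz endgame with the four sign choices $\epsilon_1,\epsilon_2\in\{-,+\}$ — is routine manipulation.
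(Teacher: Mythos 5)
Your first half reproduces the paper's Subsection 2.2 essentially verbatim: reduce to $\omega=0$, note that $(A-\partial_t)(\Phi u)=-\Phi'u$ for a forward solution $u$, apply Fourier--Laplace/Plancherel to get $\|\Phi u\|\le r^{-1}\|\Phi'u\|$, square, split $r^2\Phi^2-\Phi'^2$ into positive and negative parts, and replace $\|S(s)u\|$ by $m(s)|u(0)|$ on the $(\cdot)_-$ side. The technical caveats you flag (unboundedness of $A$, taking $\Phi$ temporarily constant near $+\infty$ and passing to the limit) are exactly the ones the paper handles. Up to here you are on track.

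The genuine gap is the mechanism that converts these $L^2$-in-time estimates into a pointwise bound on $\|S(t)\|$. The paper's device, which you never identify, is duality via the flux: pair $u$ with a \emph{backward solution} $u^*$ of the adjoint equation $(A^*+\partial_s)u^*=0$ with $u^*(t)$ prescribed; since $[u(s)|u^*(s)]_{\mathcal H}$ is constant, $[u(t)|u^*(t)]=\int_0^tM(s)[u(s)|u^*(s)]\,ds$ for any mass-one weight $M$ supported where $\epsilon_1(r^2-\phi'^2)>0$ and $\epsilon_2(r^2-\iota_t\psi'^2)>0$; Cauchy--Schwarz then pairs the $\Phi$-estimate for $u$ with the reflected $\Psi$-estimate for $u^*$ (valid because $\|(z-A^*)^{-1}\|=\|(\bar z-A)^{-1}\|$), and the elementary optimization $\inf_M\sup_s M/F=1/\int F\,ds$ produces the denominator of \eqref{LL.5}. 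Your substitute --- isolating $\|S(t)u\|$ through $\|S(t)u\|\le\|S(t-s)\|\,\|S(s)u\|\le m(t-s)\|S(s)u\|$ --- does, once cleaned up, yield the theorem when $\epsilon_2=-$ (and is then arguably more elementary, needing no adjoint), but it cannot work when $\epsilon_2=+$: after Cauchy--Schwarz it puts $\|(r^2\Psi^2-\Psi'^2)_{+}^{1/2}m\|$ in the numerator, whereas the theorem asserts the negative part $(\cdot)_-$ applied to $m$ for \emph{all} four sign choices, and $\|(r^2\Psi^2-\Psi'^2)_+^{1/2}m\|$ is not controlled by $\|(r^2\Psi^2-\Psi'^2)_-^{1/2}m\|$ (in the appendix's case $(\epsilon_1,\epsilon_2)=(+,+)$ the former is supported on a long interval and is far larger). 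Only the adjoint estimate, which bounds a $(\cdot)_{\pm}$-weighted norm of the backward solution $u^*$ by a $(\cdot)_-$-weighted norm of $m$ times $|u^*(t)|$, achieves this uniformly in $\epsilon_2$. As written, your ``final assembly'' (the display with unresolved placeholders) is not a proof of the stated theorem.
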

Here for $a\in \mathbb R$, $a_+=\max (a,0)$ and  $a_-= \max (-a,0)$.

We now discuss the consequences of this theorem that can be obtained with suitable choices of $\Phi,\Psi,\epsilon_1,\epsilon_2$.\\
The first one is a Wei like version of our previous Theorem
\ref{int2}.
\begin{theorem}\label{th1.7}
For positive $a$ and $b $,  we have, for $ t > a+b \,$, 
 \begin{equation}\label{eq:gp1}
 || S(t)|| \leq \frac{e^{\omega t - r(\omega) (t- a-b)} }{r(\omega)} \, \frac{1}{\Vert
    \frac{1}{m}\Vert_{e^{-\omega \cdot }L^2(0,a)} \Vert
    \frac{1}{m}\Vert_{e^{-\omega \cdot }L^2(0,b)} }. 
    \end{equation}
 \end{theorem}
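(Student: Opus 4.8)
The plan is to derive Theorem~\ref{th1.7} from the master estimate \eqref{LL.5} of Theorem~\ref{Th1.2} by making a judicious choice of the profile functions $\Phi$ and $\Psi$ together with the signs $\epsilon_1,\epsilon_2$. The natural candidates are the extremal functions for the elementary inequality hidden in \eqref{LL.5}: I would take $\Phi(s)=\sinh(r(\omega)s)/r(\omega)$ on $[0,a]$ so that $r(\omega)^2\Phi^2-\Phi'^2\equiv 0$ there, and then extend $\Phi$ past $s=a$ in a way that makes $r(\omega)^2\Phi^2-\Phi'^2$ become positive (for instance by continuing with an exponential $\Phi(s)=c\,e^{r(\omega)s}$, or simply by a linear continuation, chosen so that $\Phi$ stays $C^1$, nonnegative, and positive for $s>0$). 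The point is that with such a choice the negative-part factors $(r(\omega)^2\Phi^2-\Phi'^2)_-^{1/2}$ in the numerator are supported only on $[0,a]$, where $\Phi'/\Phi=r(\omega)\coth(r(\omega)s)\cdot$(something) and the weight reduces to $|\Phi'|=\cosh(r(\omega)s)$ up to constants; similarly $\Psi$ is built from $\sinh(r(\omega)s)/r(\omega)$ on $[0,b]$.

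The key computational steps are then: (i) evaluate the numerator norms. On $[0,a]$ one has $(r(\omega)^2\Phi^2-\Phi'^2)_-^{1/2}=|\Phi'|=\cosh(r(\omega)s)$, so up to the weight $e^{\omega s}$ the first numerator factor becomes $\|\cosh(r(\omega)\cdot)\,m\|$ — but actually the cleaner route is to use homogeneity and instead track the ratio, so that this factor contributes exactly $\Vert \tfrac1m\Vert_{e^{-\omega\cdot}L^2(0,a)}^{-1}$ after the optimization; (ii) evaluate the denominator. Here, choosing $\epsilon_1=\epsilon_2=+$ picks out the region $s\in[a,\,t-b]$ where both $r(\omega)^2\Phi^2-\Phi'^2$ and its reflected $\Psi$-counterpart are positive, and on that region the integrand is a product of exponentials $e^{r(\omega)s}\cdot e^{r(\omega)(t-s)}=e^{r(\omega)t}$, a constant, so the integral equals (a constant times) $e^{r(\omega)t}(t-a-b)$ — wait, more carefully it contributes the factor $e^{-r(\omega)(t-a-b)}/r(\omega)$ after combining with the normalizing constants. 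Then (iii) assemble everything: the $e^{\omega t}$ prefactor survives, the numerator supplies $1/(\Vert\frac1m\Vert_{e^{-\omega\cdot}L^2(0,a)}\Vert\frac1m\Vert_{e^{-\omega\cdot}L^2(0,b)})$, and the denominator supplies $e^{-r(\omega)(t-a-b)}/r(\omega)$, yielding exactly \eqref{eq:gp1}.

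I expect the main obstacle to be the bookkeeping of constants: because \eqref{LL.5} is invariant under scaling $\Phi\mapsto\lambda\Phi$ and $\Psi\mapsto\mu\Psi$, one must make sure the arbitrary multiplicative constants in the exponential continuations of $\Phi$ and $\Psi$ cancel between numerator and denominator, and that the $C^1$-matching at $s=a$ (resp. $s=b$) does not spoil the exact identity $r(\omega)^2\Phi^2-\Phi'^2=0$ on $[0,a]$. A subtle point is that $\sinh(r(\omega)s)/r(\omega)$ does satisfy $\Phi''=r(\omega)^2\Phi$, hence $r(\omega)^2\Phi^2-\Phi'^2$ is actually \emph{constant} $=-1/r(\omega)^2\cdot r(\omega)^2=\dots$ — in fact it equals $-(\Phi'(0))^2=-1<0$ identically on $[0,\infty)$ for the pure $\sinh$, which already gives a clean version; the truncation at $a$ and switch to a growing exponential is what creates the positive region needed for the denominator with $\epsilon_i=+$. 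One should double-check the density remark following \eqref{defpropchi} allows the locally-Lipschitz (piecewise-defined) $\Phi,\Psi$ used here, and verify the endpoint condition $t>a+b$ is exactly what guarantees the denominator integral is over a nonempty interval. Finally, one recovers Theorem~\ref{int2} as the limiting case $t=a+b$ (where the exponential factor becomes $1$ and \eqref{eq:gp1} degenerates to \eqref{int.4}), which is a good consistency check on the constants.
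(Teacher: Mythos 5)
Your overall strategy --- three-piece profiles that are ``critical'' on $[0,a]$, purely exponential in the middle, and chosen to feed the denominator near $t$ --- is indeed the paper's strategy, but two of your concrete choices would make the argument fail. First, the sign choice: you take $\epsilon_1=\epsilon_2=+$ and continue $\Phi$ by $c\,e^{r(\omega)s}$ on the middle region. But for a pure exponential of slope $r(\omega)$ one has $r(\omega)^2\Phi^2-\Phi'^2\equiv 0$, so $(r(\omega)^2\Phi^2-\Phi'^2)_+^{1/2}\equiv 0$ on $[a,t-b]$, and since your $\sinh$ piece makes this quantity strictly negative on $[0,a]$, the denominator of \eqref{LL.5} vanishes identically: the bound is vacuous. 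To make $(+,+)$ work you must take a middle slope strictly less than $r(\omega)$, and the optimization carried out in the appendix (Proposition \ref{++2}) shows that the best one can then achieve is $\frac{e}{2r}e^{-r(t-a-b)}(1+o(1))$, i.e.\ a loss of a factor $e/2$ --- the clean constant $1/r(\omega)$ in \eqref{eq:gp1} is not reachable by this route. The paper instead takes $(\epsilon_1,\epsilon_2)=(+,-)$, so that the denominator integral is supported on $[t-b,t]$, where $\Phi^2-\Phi'^2>0$ (because $\Phi$ there is built from $\theta\in{\cal G}$ with $|\theta'|\le\theta$) and $(\iota_t\Psi)^2-(\iota_t\Psi')^2<0$ (because $\Psi$ on $[0,b]$ is built from $v$ with $v\le v'$); the factor $e^{-r(t-a-b)}$ then comes out of the ratio of the exponential prefactors $e^{a},e^{b}$ in the numerator against $e^{t}$ in the denominator, not from integrating a constant over $[a,t-b]$.

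Second, the numerator. With $\Phi=\sinh(r s)/r$ on $[0,a]$ one gets $(r^2\Phi^2-\Phi'^2)_-\equiv 1$, so that numerator factor is $\|m\|_{e^{\omega\cdot}L^2(0,a)}$, and no homogeneity argument converts this into $\Vert\frac1m\Vert_{e^{-\omega\cdot}L^2(0,a)}^{-1}$: by Cauchy--Schwarz $\|m\|\,\|1/m\|\ge a$, which is an inequality in the wrong direction, and for nonconstant $m$ the two quantities genuinely differ. The correct choice on $[0,a]$ is the Cauchy--Schwarz extremizer, $\Phi=e^{a}u$ with $u'(s)m(s)=C/m(s)$, i.e.\ $u(s)\propto\int_0^s m(\tau)^{-2}d\tau$ (this is exactly \eqref{eq:gp3} and Proposition \ref{prop3.6}); the companion bound for the $[t-b,t]$ piece is the trivial choice $\theta\equiv 1$ in \eqref{eq:gp2}, fed through Lemma \ref{prop3.12}. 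I recommend you rework the proof along the $(+,-)$ route via Proposition \ref{propminmax}, and then obtain the general $(\omega,r(\omega))$ statement by the rescaling of Subsection \ref{ss4.1new} rather than by tracking constants directly.
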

In the case of Wei's theorem we have $\omega =0$, $m=1$. With $b=a$ we
first get
$$
 || S(t)|| \leq \frac{1 }{a r(0)} \exp - r(0) (t- 2a) \, ,\ \ t>2a.
 $$
Minimization with respect to $a$ leads to $ar(0)=\frac 12$ and consequently to 
 $$
  || S(t)|| \leq 2 e \exp - r(0) t\,,\ \ t>\frac{1}{r(0)}\, ,
 $$
 which is not quite as sharp as \eqref{eq:w},  since $e^{\pi
     /2}\approx 4.81$, $2e\approx 5.44$.\\
  
We will show that a finer approach will permit to recover \eqref{eq:w}
 and generalize it to more general $m$'s. We assume 
 \begin{equation}\label{eq:hypsurm}
 0 < m \in C^1([0,+\infty[)\,.
 \end{equation}
 
  An important step will be to prove (we assume $\omega=0$, $r(0)=1$) as a consequence of Theorem \ref{Th1.2} with $\epsilon_1 =-$ and $\epsilon_2=+$, the following key proposition 
 \begin{proposition}\label{propminmax} Assume that $\omega =0$,
   $r(\omega )=1$.
  Let $a, b$ positive. Then for  $t\geq a+b$,  
 \begin{equation}\label{eq:optprob}
 || S(t)|| \leq \exp -( t -a - b ) \,\frac{ \left(\inf_u \int_0^a
     m(s)^2 (u'^2(s)-u^2(s))_+ ds\right)^{1/2}  }{  \left( \sup_\theta \int_0^{b } \frac{1}{m^2}  (\theta(s)^2-\theta'(s)^2) \,ds \right)^{1/2} } \,,
 \end{equation}
 where
 \begin{itemize}
 \item $u\in H^1(]0,a[)$ satisfies $u(0)=0$, $u(a) =1$\,;
 \item $\theta \in H^1((]0,b[)$ satisfies $\theta(b) = 1$ and $ |\theta '|\le \theta$\,.
 \end{itemize}
 \end{proposition}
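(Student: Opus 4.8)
The plan is to invoke Theorem~\ref{Th1.2} with $\omega=0$, $r(\omega)=1$ and the prescribed signs $\epsilon_1=-$, $\epsilon_2=+$, and to feed it, separately for each admissible $u$ (with $u(0)=0$, $u(a)=1$) and each admissible $\theta$ (with $\theta(b)=1$, $|\theta'|\le\theta$), a pair $(\Phi,\Psi)$ tailored to $u$ and $\theta$; the infimum over $u$ and the supremum over $\theta$ are taken only at the very end. With these parameters the right-hand side of \eqref{LL.5} is $N_\Phi N_\Psi/D$ with $N_\Phi^2=\int_0^t(\Phi'^2-\Phi^2)_+m^2$, $N_\Psi^2=\int_0^t(\Psi'^2-\Psi^2)_+m^2$, and $D=\int_0^t(\Phi'(s)^2-\Phi(s)^2)_+^{1/2}\,(\Psi(t-s)^2-\Psi'(t-s)^2)_+^{1/2}\,ds$. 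The goal is to arrange that $N_\Psi^2=\int_0^am^2(u'^2-u^2)_+$, that $N_\Phi^2=\int_0^bm^{-2}(\theta^2-\theta'^2)$, and that $D=e^{t-a-b}N_\Phi^2$, for then $\|S(t)\|\le N_\Phi N_\Psi/D=e^{-(t-a-b)}N_\Psi/N_\Phi$, which is exactly \eqref{eq:optprob} before passing to $\inf_u$ and $\sup_\theta$.

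For $\Psi$ I take the ``forward'' profile $\Psi=u$ on $[0,a]$, $\Psi(s)=e^{s-a}$ on $[a,t-b]$, and $\Psi(s)=e^{t-a-b}\theta(t-s)$ on $[t-b,t]$ (extended positively past $t$, irrelevantly). The constant $e^{t-a-b}$ makes $\Psi$ continuous at $s=t-b$; it is already continuous at $s=a$ since $u(a)=1$, it is Lipschitz (hence admissible by the density remark following \eqref{defpropchi}), it vanishes only at $0$, and it is positive on $]0,\infty[$ because $|\theta'|\le\theta$ with $\theta(b)=1$ forces $\theta>0$ on $[0,b]$. One computes $\Psi'^2-\Psi^2=u'^2-u^2$ on $[0,a]$, $\equiv0$ on $[a,t-b]$, and $=-e^{2(t-a-b)}(\theta(t-s)^2-\theta'(t-s)^2)\le0$ on $[t-b,t]$, whence $N_\Psi^2=\int_0^am^2(u'^2-u^2)_+$. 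Moreover the reflected quantity $\Psi(t-s)^2-\Psi'(t-s)^2$ equals $e^{2(t-a-b)}(\theta(s)^2-\theta'(s)^2)\ge0$ for $s\in[0,b]$, vanishes for $s\in[b,t-a]$, and equals $u(t-s)^2-u'(t-s)^2$ for $s\in[t-a,t]$; here the hypothesis $t\ge a+b$ enters, to guarantee that these three intervals partition $[0,t]$.

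For $\Phi$ I use the room left in $D$: on $[b,t]$ let $\Phi$ be exponential of rate one, so $(\Phi'^2-\Phi^2)_+\equiv0$ there, which kills the $[b,t]$-part of $N_\Phi$ and the $[t-a,t]$-part of $D$ (its $[b,t-a]$-part already vanished by the previous paragraph). On $[0,b]$ take $\Phi$ to solve $\Phi'=(\Phi^2+g)^{1/2}$, $\Phi(0)=0$, with $g:=(\theta^2-\theta'^2)/m^4\ge0$, so that $(\Phi'(s)^2-\Phi(s)^2)_+^{1/2}=(\theta(s)^2-\theta'(s)^2)^{1/2}/m(s)^2$ on $[0,b]$. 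Then $N_\Phi^2=\int_0^b g\,m^2=\int_0^bm^{-2}(\theta^2-\theta'^2)$, and, since the only surviving contribution to $D$ is over $[0,b]$, $D=\int_0^b\frac{(\theta^2-\theta'^2)^{1/2}}{m^2}\cdot e^{t-a-b}(\theta^2-\theta'^2)^{1/2}\,ds=e^{t-a-b}\int_0^bm^{-2}(\theta^2-\theta'^2)=e^{t-a-b}N_\Phi^2$. Substituting into $\|S(t)\|\le N_\Phi N_\Psi/D$ gives, for the fixed pair $(u,\theta)$, $\|S(t)\|\le e^{-(t-a-b)}\bigl(\int_0^am^2(u'^2-u^2)_+\bigr)^{1/2}\bigl(\int_0^bm^{-2}(\theta^2-\theta'^2)\bigr)^{-1/2}$, and passing to $\inf_u$ and $\sup_\theta$ yields \eqref{eq:optprob}.

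The one genuinely delicate step is the construction of $\Phi$ on $[0,b]$: I must know that the (Carath\'eodory) solution of $\Phi'=(\Phi^2+g)^{1/2}$, $\Phi(0)=0$, exists on all of $[0,b]$, is locally Lipschitz, and stays $>0$ for $s>0$, so that $\Phi$ is admissible and the displayed identity for $D$ is legitimate. This becomes routine once one observes — by a density argument that does not change $\sup_\theta\int_0^bm^{-2}(\theta^2-\theta'^2)$ — that one may restrict to competitors $\theta$ with $\theta'\in L^\infty$ and $\theta(0)>|\theta'(0)|$, so that $g$ is bounded and $g(0)>0$, which forces $\Phi>0$ on $]0,b]$; a symmetric harmless reduction lets one assume $u>0$ on $]0,a]$, so that $\Psi>0$ on $]0,\infty[$ as well. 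Everything else is the elementary bookkeeping indicated above.
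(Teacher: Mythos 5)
Your proposal is correct: the bookkeeping of supports, the use of $t\ge a+b$ to separate the three regimes, and the identity $D=e^{t-a-b}N_\Phi^2$ all check out, and the closing density reductions (positivity of $u$, non-degeneracy of $\theta$ near $0$, Lipschitz competitors) are legitimate and of the same kind the paper itself invokes after \eqref{defpropchi}.

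The architecture is essentially the paper's, up to exchanging the roles of $\Phi$ and $\Psi$ (hence of $\epsilon_1$ and $\epsilon_2$, which is harmless since the right-hand side of \eqref{LL.5} is invariant under that exchange combined with the reflection in the denominator integral). The one genuine difference is how the $\theta$-factor reaches the denominator of \eqref{eq:optprob}. The paper keeps a second free profile $v\in\mathcal H_b$ on $[0,b]$, obtains the ratio $K(b,\theta,v)$ of \eqref{Kbthetav}, and then optimizes over $v$ in a separate step (Lemma \ref{prop3.12}) via Cauchy--Schwarz together with the construction $w'=\sqrt{w^2+h^2m^{-4}}$, $w(0)=0$. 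You instead build that optimizer directly into the test function, defining $\Phi$ on $[0,b]$ by the same Riccati-type ODE $\Phi'=(\Phi^2+g)^{1/2}$ with $g=(\theta^2-\theta'^2)/m^4$; this makes the denominator collapse to $e^{t-a-b}N_\Phi^2$ exactly and eliminates the intermediate optimization. What you gain is a shorter, self-contained derivation in which the equality case of Cauchy--Schwarz is never invoked; what the paper's route buys is the standalone Lemma \ref{prop3.12}, which isolates the variational content and is reused in its analysis of $J_{\mathrm{sup}}$ in Subsection \ref{ssmax}.
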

 This proposition implies rather directly Theorem \ref{th1.7} in the
 following way. We first observe the trivial lower bound (take
 $\theta(s)=1$)
  \begin{equation}\label{eq:gp2} \sup_\theta \int_0^{b } \frac{1}{m^2}  (\theta(s)^2-\theta'(s)^2) \,ds \geq  \int_0^{b } \frac{1}{m^2} ds\,.
  \end{equation}
  
  A more tricky argument based on the equality case in Cauchy-Schwarz'
  inequality (see Subsection \ref{ss3.6} for details), gives
\begin{equation}\label{eq:gp3}
 \inf_u \int_0^a m(s)^2 (u'(s)^2-u(s)^2)_+ ds  \leq  \inf_u \int_0^a m(s)^2 u'(s)^2 ds  \leq 1/  \int_0^{a} \frac{1}{m^2} ds\,,
 \end{equation}
 Combining \eqref{propminmax} with \eqref{eq:gp2} and \eqref{eq:gp3}
 gives directly \eqref{eq:gp1} in the case $\omega =0$,
 $r(\omega )=1$.  A rescaling argument (which will be detailed in
 Subsection \ref{ss4.1new}) then gives (\ref{eq:gp1}) in
 general.\\

 To refine the analysis of the right hand side of
 \eqref{eq:optprob}, we have to analyze for positive $a$ and $b$ the
 quantities
 $$ I_{\mathrm{inf}}(a):=  \inf_u \int_0^a m(s)^2 (u'(s)^2-u^2(s))_+ ds $$ and $$J_{\mathrm{max}} (b):= \sup_\theta \int_0^{b } \frac{1}{m^2}  (\theta(s)^2-\theta'(s)^2) \,ds\,,
 $$
 where $u$ and $\theta $ satisfy the above conditions.  This will be
 the main object of Section \ref{s3}.  
 To present some of the results
 in this introduction, we consider the Dirichlet-Robin realization
 $K_{m,a}^{DR}$ of the operator
 \begin{equation}\label{eq:defK}
 K_m := -\frac{1}{m^2} \partial_s \circ m^2 \partial_s - 1\,,
 \end{equation}
 in the interval $]0,a[$. The Dirichlet-Robin condition is
 \begin{equation}\label{DRcond}
 u(0) =0\,,\, u'(a) = u(a)\,,
\end{equation}
and we define the domain of $K_{m,a}^{DR}$ by
$$
{\cal D}(K_{m,a}^{DR})=\{ u\in H^2(]0,a[);\, u\hbox{ satisfies
  (\ref{DRcond})} \} .
$$
 We note that  this realization is a  self-adjoint operator
 on $L^2(]0,a[, m^2 ds)$, bounded from below and  with purely discrete spectrum.

 \par Let $\lambda ^{DR}(a,m)$ denote the lowest eigenvalue of
 $K_{m,a}^{DR}$. Then $\lambda ^{DR}(a,m)>0 $ when $a>0$ is small
 enough. We define
 \begin{equation}\label{defa*}
a^*=a^*(m)=\sup \{\widetilde{a}\in ]0,\infty [; \lambda
^{DR}(a,m)>0\hbox{ for }0<a<\widetilde{a} \},
\end{equation}
so that $a ^*(m)\in ]0,+\infty ]$. Since $\lambda ^{DR}(a,m)$ is a
continuous function of $a$, we have in the case $a^*<\infty $ that
$$
\lambda ^{DR}(a^*,m)=0,\ \ \lambda ^{DR}(a,m)>0 \hbox{ for }0<a<a^*.
$$

\par We  introduce the condition 
 \begin{equation}\label{eq:C1weak}
 \liminf_{s\rightarrow +\infty} \mu(s) > - 1\mbox{ with } \mu := m'/m\,.
 \end{equation}
  
 Under this condition, we will show   that $a^*(m)<+\infty $.
We will show  in Section \ref{s3} that if
  on $]0,a^{*} [$
  \begin{equation}
  \psi_0 (s;m)=\psi_0: = u'_0(s) / u_0 (s)\,,\ \ 0<s<a^*,
  \end{equation}
  where $u_0 $ is the first eigenfunction of the $DR$-problem in
  $]0,a^{*} [$, then: 
    \begin{theorem}\label{prop4.9} Let $\omega =0$, $r(\omega )=1$. When
    $a,b\in ]0,+\infty [\cap ]0,a^*]$
  and $t> a+b$, we have 
 \begin{equation}
 || e^t S(t)|| \leq \exp (a +b ) m(a) m(b) \psi_0(a)^\frac 12  \psi_0 (b)^\frac 12  \,.
\end{equation}
In particular, when $a^*<+\infty $, we have 
 \begin{equation}\label{eq:thintro}
 || e^t S(t)|| \leq \exp (2 a^{*})   \, m(a^{*} )^2 \,,\ \ t>2 a^*\,.
 \end{equation}
\end{theorem}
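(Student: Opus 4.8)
The plan is to use Proposition~\ref{propminmax} and to feed into the right-hand side of \eqref{eq:optprob} the two sharp bounds
\begin{equation}\label{eq:planbounds}
I_{\mathrm{inf}}(a)\le m(a)^2\psi_0(a)\,,\qquad J_{\mathrm{max}}(b)\ge \frac{1}{m(b)^2\psi_0(b)}\,,
\end{equation}
valid for all $a,b\in\,]0,a^*]$, where $\psi_0$ is extended up to $a^*$ by $\psi_0(a^*)=u_0'(a^*)/u_0(a^*)=1$, the last equality being the Robin condition in \eqref{DRcond}. Granting \eqref{eq:planbounds}, estimate \eqref{eq:optprob} gives $\|S(t)\|\le e^{-(t-a-b)}\bigl(m(a)^2\psi_0(a)\bigr)^{1/2}\bigl(m(b)^2\psi_0(b)\bigr)^{1/2}$, and multiplying by $e^t$ yields the first asserted inequality; \eqref{eq:thintro} then follows by taking $a=b=a^*$ (legitimate once $0<a^*<+\infty$) and using $\psi_0(a^*)=1$. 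Thus everything reduces to \eqref{eq:planbounds} and to the structural facts about $u_0$ and $\psi_0$ underlying it.

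The key preliminary, which I expect to be the main obstacle, is to show that $u_0>0$ on $]0,a^*[$ (so that $\psi_0$ is well defined and positive there), that $u_0(a^*)>0$, and that $\psi_0\ge 1$ on all of $]0,a^*]$ with $\psi_0(a^*)=1$. For the positivity I would use a Sturm-type comparison between the Dirichlet-Robin and the Dirichlet realizations of $K_m$: a first zero of $u_0$ at $s_0\in\,]0,a^*[$ would make $u_0|_{]0,s_0[}$ a sign-definite, hence ground-state, Dirichlet eigenfunction of $K_m$ on $]0,s_0[$ with eigenvalue $0$; since on $\{u(0)=0\}$ the Dirichlet-Robin quadratic form coincides with the Dirichlet one on functions also vanishing at $s_0$ and is defined on a larger space, the min-max principle gives $\lambda^{DR}(s_0,m)\le 0$, contradicting $s_0<a^*$. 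The same comparison with the Robin endpoint rules out $\psi_0(s_1)=1$ for $s_1\in\,]0,a^*[$, since then $u_0|_{]0,s_1[}$ would be a positive eigenfunction of $K_{m,s_1}^{DR}$ with eigenvalue $0$, i.e.\ $\lambda^{DR}(s_1,m)=0$ with $s_1<a^*$. As $u_0(0)=0$ and $u_0>0$ on $]0,a^*[$, we have $u_0'(0)>0$, so $\psi_0(s)\to+\infty$ as $s\to 0^+$; combined with continuity and $\psi_0\ne 1$ on $]0,a^*[$, this gives $\psi_0>1$ there. Finally, $u_0(a^*)=0$ would force, via the Robin condition and ODE uniqueness, $u_0\equiv 0$; hence $u_0(a^*)>0$, $u_0'(a^*)=u_0(a^*)>0$, and $\psi_0(a^*)=1$.

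It then remains to establish \eqref{eq:planbounds}, and both inequalities rest on the single identity
$$\int_0^c m^2\bigl((u_0')^2-u_0^2\bigr)\,ds=m(c)^2\,u_0'(c)\,u_0(c)\,,\qquad 0<c\le a^*\,,$$
obtained by integrating $\int_0^c m^2(u_0')^2\,ds$ by parts and using $(m^2u_0')'=-m^2u_0$, the boundary term at $0$ vanishing because $u_0(0)=0$. For the upper bound on $I_{\mathrm{inf}}(a)$ I would take the competitor $u=u_0/u_0(a)\in H^1(]0,a[)$, which has $u(0)=0$ and $u(a)=1$; since $\psi_0\ge 1$ on $]0,a]$ we have $(u')^2\ge u^2$ there, so $(u'^2-u^2)_+=u'^2-u^2$, and the identity gives $\int_0^a m^2(u'^2-u^2)_+\,ds=m(a)^2\psi_0(a)$. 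For the lower bound on $J_{\mathrm{max}}(b)$ I would take $\theta=c_0\,m^2u_0'$ on $]0,b[$ with $c_0=(m(b)^2u_0'(b))^{-1}$, so that $\theta(b)=1$, $\theta>0$, and $\theta'=-c_0m^2u_0$ by $K_mu_0=0$; hence $|\theta'|\le\theta$ is equivalent to $\psi_0\ge 1$ and so holds on $]0,b]$, while pointwise $m^{-2}(\theta^2-(\theta')^2)=c_0^2m^2\bigl((u_0')^2-u_0^2\bigr)$, and the identity gives $\int_0^b m^{-2}(\theta^2-(\theta')^2)\,ds=c_0^2\,m(b)^2u_0'(b)u_0(b)=(m(b)^2\psi_0(b))^{-1}$. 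Combining these with \eqref{eq:optprob} as in the first paragraph then completes the argument.
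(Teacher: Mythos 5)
Your proof is correct, and it follows the paper's overall skeleton: combine Proposition \ref{propminmax} with the values of $I_{\mathrm{inf}}(a)$ and $J_{\mathrm{sup}}(b)$ at the $m$-harmonic ground state. Where you genuinely diverge is in how those two variational quantities are handled. The paper identifies them \emph{exactly} (Propositions \ref{prop3.18} and \ref{prop3.26}), which forces it through the full machinery of Section \ref{s3}: removal of the positive part (Proposition \ref{red1}), existence of minimizers and maximizers, and the Riccati/structure analysis culminating in Proposition \ref{stm8} and its maximizer analogue, all of which serve to prove that $u_0/u_0(a)$ and the $1/m$-harmonic function really are the extremizers. You observe, correctly, that \eqref{eq:optprob} only needs the one-sided bounds $I_{\mathrm{inf}}(a)\le m(a)^2\psi_0(a)$ and $J_{\mathrm{max}}(b)\ge (m(b)^2\psi_0(b))^{-1}$, which follow from exhibiting \emph{competitors}: $u_0/u_0(a)$ for the infimum and $c_0\,m^2u_0'$ for the supremum (the latter is exactly the paper's global maximizer in disguise, being $1/m$-harmonic with $\theta'/\theta=-1/\psi_0$ and $\theta'(0)=0$, cf.\ Remark \ref{max2}). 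Both admissibility checks and both values then reduce to the single integration-by-parts identity $\int_0^c m^2(u_0'^2-u_0^2)\,ds=m(c)^2u_0'(c)u_0(c)$ together with $\psi_0\ge 1$ on $]0,a^*]$, and you obtain that last property by a clean Sturm/min-max comparison with the Dirichlet and Dirichlet--Robin realizations, using only the definition \eqref{defa*} of $a^*$ — rather than via the Riccati analysis of Subsection \ref{sss3.4.2}. The trade-off is clear: your route is shorter and self-contained for the purposes of this theorem, while the paper's heavier route buys the exact identification of the optimizers, which it needs elsewhere (e.g.\ to assert optimality in Theorem \ref{prop4.9bis} and in Proposition \ref{lem3.17}). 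One small point of care: your definition of $\psi_0(a^*)$ presupposes that $u_0$ satisfies the Robin condition at $a^*$; this is automatic if, as in the theorem's statement, $u_0$ is taken to be the ground state of $K^{DR}_{m,a^*}$ (whose eigenvalue is $0$ by continuity of $\lambda^{DR}(\cdot,m)$), and your non-vanishing argument at $a^*$ then closes the loop.
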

 This theorem is the analog of Wei's theorem for general  weights $m$.\\
 By a general procedure described in Subsection \ref{ss4.1new}, we
 have actually a more general statement.  We consider $\hat A$ with the same properties as $A$ where the hat's are introduced to make easier the transition between the particular case above to the general case below. As before, we introduce
   $\hat \omega $ and $\hat r =\hat r(\hat \omega)$. 
    \begin{theorem}\label{prop4.9bis} 
    Let $\hat S(\hat t)= e^{\hat t \, \hat A} $ satisfying $$
    || \hat S (\hat t) || \leq \hat m(\hat t)\,,\, \forall \hat t >0\,.
    $$
    Then there exist uniquely defined\footnote{The definition will be given in Subsection \ref{ss4.1new}}  $\hat a^*:=\hat a^*(\hat m, \hat \omega,\hat r )>0 $ and $\hat \psi:=\hat \psi (\cdot ; \hat m, \hat \omega,\hat r )$ on $]0,\hat a^*[$ with the same general properties as above  such that,
 if  $\hat a, \hat b\in ]0,+\infty [\cap ]0,\hat a^*]$
  and $\hat t> \hat a+\hat b$, we have 
 \begin{equation} 
 || S(\hat t)|| \leq \exp \left( ( \hat \omega  -  \hat r(\omega)) (\hat t- (\hat a + \hat b )) \right)  \hat m(\hat a) \hat m(\hat b)   \hat \psi(\hat a)^\frac 12 \, \hat \psi(\hat b )^\frac 12   \,.
\end{equation}
Moreover, when $ \hat a^*<+\infty $, the estimate is optimal for $\hat a=\hat b=\hat a^{*} $ and reads 
 \begin{equation}\label{eq:thintrobis}
 ||  \hat S(\hat t)|| \leq \exp ((\hat \omega   -  \hat r ) (\hat t - 2\hat a^{*}))   \, \hat m(\hat a^*)^2 \,,\ \ t>2 \hat a^*\,.
 \end{equation}
 Moreover
 $$
 \hat a^* (\hat m,\hat \omega)= \hat r \, a^* (e^{-\hat \omega \cdot}\hat m) \,,\, \hat \psi (\hat s;  \hat m, \hat \omega,\hat r ) = \psi_0 (\hat r \hat s ;  e^{-\hat \omega \cdot}\hat m)   \,.
$$
\end{theorem}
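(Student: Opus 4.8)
\medskip
\noindent\textbf{Proof proposal.}
The plan is to deduce Theorem~\ref{prop4.9bis} from the already–proved normalised statement, Theorem~\ref{prop4.9}, by composing the two elementary operations used throughout the paper — an exponential twist removing $\hat\omega$, and a linear dilation of time renormalising $\hat r$ to $1$ — and then transporting the conclusion back.

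First I would introduce the normalised semigroup
$$S(t):=e^{-\hat\omega t/\hat r}\,\hat S(t/\hat r),\qquad t\ge 0,$$
which is again strongly continuous with $S(0)=I$, has generator $A=(\hat A-\hat\omega)/\hat r$ on ${\cal D}(\hat A)$, and satisfies $z\in\rho(A)\Leftrightarrow \hat r z+\hat\omega\in\rho(\hat A)$ with $(z-A)^{-1}=\hat r\,(\hat r z+\hat\omega-\hat A)^{-1}$. Since $z\mapsto\hat r z+\hat\omega$ carries $\{\Re z>0\}$ onto $\{\Re w>\hat\omega\}$, the definition of $\hat r=\hat r(\hat\omega)$ gives $\sup_{\Re z>0}\|(z-A)^{-1}\|=1$; thus $S$ fulfils the hypotheses of Theorem~\ref{prop4.9} with $\omega=0$, $r(\omega)=1$. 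Moreover $\|S(t)\|\le m(t):=e^{-\hat\omega t/\hat r}\hat m(t/\hat r)$, which is $C^1$ and $>0$, so \eqref{eq:hypsurm} holds for $m$; and one has the identity $m(\hat r\sigma)=e^{-\hat\omega\sigma}\hat m(\sigma)$.

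Then I would apply Theorem~\ref{prop4.9} to $(S,m)$, obtaining $a^*=a^*(m)\in\,]0,+\infty]$, the function $\psi_0(\cdot\,;m)$ on $]0,a^*[$ coming from the first Dirichlet--Robin eigenfunction, and
$$\|e^{t}S(t)\|\le e^{a+b}\,m(a)\,m(b)\,\psi_0(a)^{1/2}\psi_0(b)^{1/2}\qquad(a,b\in\,]0,+\infty[\cap\,]0,a^*],\ t>a+b).$$
Next I would undo the normalisation: putting $\hat t=t/\hat r$, $\hat a=a/\hat r$, $\hat b=b/\hat r$, one has $S(\hat r\hat t)=e^{-\hat\omega\hat t}\hat S(\hat t)$, hence $\|\hat S(\hat t)\|=e^{\hat\omega\hat t}\|S(\hat r\hat t)\|$; inserting the estimate above with $a=\hat r\hat a$, $b=\hat r\hat b$ and using $m(\hat r\hat a)=e^{-\hat\omega\hat a}\hat m(\hat a)$, all exponential prefactors collapse to $\exp\!\big((\hat\omega-\hat r)(\hat t-(\hat a+\hat b))\big)$, which yields exactly the asserted bound with
$$\hat a^*:=a^*(m)/\hat r,\qquad \hat\psi(\hat s):=\psi_0(\hat r\hat s\,;m),\qquad \hat a,\hat b\in\,]0,+\infty[\cap\,]0,\hat a^*],\ \hat t>\hat a+\hat b.$$
Uniqueness of $\hat a^*,\hat\psi$ is inherited from that of $a^*(m)$ (continuity of $a\mapsto\lambda^{DR}(a,m)$) and of $\psi_0$ (simplicity of the lowest Dirichlet--Robin eigenvalue). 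To get the closed \emph{Moreover} formulas expressing $\hat a^*$ and $\hat\psi$ through $e^{-\hat\omega\cdot}\hat m$ and $\hat r$, I would run the dilation $s\mapsto\hat r s$ inside $K_{m,a}^{DR}$ — the same change of variable used in Subsection~\ref{ss4.1new} to pass from Proposition~\ref{propminmax} (the case $\omega=0$, $r(\omega)=1$) to general $\omega,r(\omega)$. Finally, when $\hat a^*<+\infty$ I would take $\hat a=\hat b=\hat a^*$: the Robin condition $u_0'(a^*)=u_0(a^*)$ forces $\psi_0(a^*;m)=1$, so $\hat\psi(\hat a^*)=1$ and the estimate reduces to \eqref{eq:thintrobis} (which transported back is \eqref{eq:thintro}); its optimality in the parameters $\hat a,\hat b$ is inherited from Theorem~\ref{prop4.9}.

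The only real obstacle I anticipate is bookkeeping: keeping the three dilations — twist by $e^{-\hat\omega t/\hat r}$, time scaling $t\mapsto t/\hat r$, and the induced scaling of the eigenvalue problem — mutually consistent, so that $\hat a^*(\hat m,\hat\omega,\hat r)$ and $\hat\psi(\cdot\,;\hat m,\hat\omega,\hat r)$ are genuinely the uniquely determined objects and the exponents in the final estimate line up exactly. No new analytic ingredient beyond Theorem~\ref{prop4.9} should be needed.
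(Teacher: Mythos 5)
Your proposal is correct and follows essentially the same route as the paper: Subsection \ref{ss4.1new} performs exactly this normalisation (exponential twist by $e^{-\hat\omega\hat t}$ together with the time dilation $t=\hat r\hat t$, giving $A=(\hat A-\hat\omega)/\hat r$ with $\omega=0$, $r=1$ and $m(t)=e^{-\hat\omega t/\hat r}\hat m(t/\hat r)$), tracks how the minimization problem, the space ${\cal H}$, the variational equation and the logarithmic derivative $\hat\psi=(\hat u'/\hat r)/\hat u$ transform, and then concludes that Theorem \ref{prop4.9bis} is a direct consequence of Theorem \ref{prop4.9}. Your identification $\hat a^*=a^*(m)/\hat r$ and $\hat\psi(\hat s)=\psi_0(\hat r\hat s;m)$, and the collapse of the exponential prefactors to $\exp((\hat\omega-\hat r)(\hat t-\hat a-\hat b))$, agree with the paper's computation.
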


 Theorem \ref{th1.7}, Proposition \ref{propminmax} and Theorem
  \ref{prop4.9} are based   in Section \ref{s4}  on Theorem \ref{Th1.2}, with the choice
  $(\epsilon _1,\epsilon _2)=(+,-)$  which is proved in Section \ref{s2}.  In the appendix we explore the
  consequences of the choice $(\epsilon _1,\epsilon _2)=(+,+)$. In
  this case it turned out to be more difficult to reach equally clear
applications. 
  
 \section{Proof of Theorem \ref{Th1.2}}\label{s2} \setcounter{equation}{0}
\subsection{Flux} Let
$u(t)\in C^1([0,+\infty [;{\cal H})\cap C^0([0,+\infty [;{\cal
  D}(A))$,
$u^*(t)\in C^1(]-\infty ,T];{\cal H})\cap C^0(]-\infty ,T];{\cal
  D}(A^*))$, solve $(A-\partial _t)u=0$ and $(A^*+\partial _t)u^*=0$
on $[0,+\infty [$ and $]-\infty ,T]$ respectively. Then the flux (or
Wronskian) $[u(t)|u^*(t)]$ is constant on $[0,T]$ as can be seen by
computing the derivative with respect to $t$. Here we use the
notations $[\cdot |\cdot ]_{\cal H}$ and $|\cdot |_{\cal H}$ for the
``point-wise'' scalar product and norm in ${\cal H}$.

\subsection{$L^2$ estimate} 
Write $L^2_\phi (I)=L^2(I;e^{-2\phi }dt)=e^\phi L^2(I)$, $\|u\|_\phi
=\|u\|_{\phi ,I}=\|u\|_{L^2_\phi (I)}$, where $I$ is an interval and our functions take values in ${\cal
  H}$. (Our vector valued functions
will be norm continuous, so we avoid the formal definition of
these spaces with the Lebesgue integral and manage with the Riemann integral.)
By Parseval-Plancherel, the Laplace transform
$$
{\cal L}u(\tau )=\int e^{-t\tau }u(t)dt
$$
gives a unitary map from $L^2_{\omega \cdot }(\mathbb R)$ to
$L^2(\Gamma _\omega ;d\Im \tau /(2\pi ))$, where $\Gamma _\omega
\subset \mathbb C$ denotes the line given by $\Re \tau =\omega $ and
$\omega $ is real. 
By applying ${\cal L}$ we see that
$(A-\partial _t)^{-1}:\, L^2_{\omega \cdot }(\mathbb R)\to L^2_{\omega
  \cdot }(\mathbb R)$ is well-defined and bounded of norm
$1/r(\omega )$.
\par Consider $(A-\partial _t)u=0$ on $[0,+\infty [$ with
$u\in L^2_{\omega \cdot }([0,+\infty [)$. \\
Let $\Phi$ satisfy (\ref{defpropchi}) and add temporarily the
assumption that $\Phi (s)$ is constant for $s\gg 0$. 
 Then $\Phi u$, $\Phi 'u$ can be viewed as elements of
$L^2_{\omega \cdot }(\mathbb R)$ and from
$$
(A-\partial _t)\Phi u=-\Phi 'u\,,
$$
we get, by the definition of $r(\omega)$,  
$$
\| \Phi u\|_{\omega \cdot }\le \frac{1}{r(\omega )}\|\Phi
'u\|_{\omega \cdot },
$$
or, taking the square,
$$ 
((r(\omega )^2\Phi^2 -\Phi '^2)u|u)_{\omega \cdot }\le 0\,.
$$
This can be rewritten as 
\begin{equation}\label{L2.1}
  ((r(\omega )^2\Phi^2 -\Phi '^2)_+u|u)_{\omega \cdot }\le
  ((r(\omega )^2\Phi^2 -\Phi '^2)_-u|u)_{\omega \cdot },
\end{equation}
or
\begin{equation}\label{L2.2}
\| (r(\omega )^2\Phi^2 -\Phi '^2)_+^{1/2}u\|_{\omega \cdot }\le 
\| (r(\omega )^2\Phi^2 -\Phi '^2)_-^{1/2}u\|_{\omega \cdot }.
\end{equation}
By a limiting procedure, we see that (\ref{L2.1}), (\ref{L2.2}) remain
valid without the assumption that $\Phi $ be constant near $+\infty $.\\
Writing $\Phi =e^{\phi }$, $\phi \in C^1(]0,+\infty [)$, $\phi (t)\to
-\infty $ when $t\to 0$, we have
$$
r(\omega )^2\Phi ^2-\Phi '^2=(r(\omega )^2-\phi '^2)e^{2\phi } \,,
$$
and (\ref{L2.1}), (\ref{L2.2}) become
\begin{equation}\label{L2.3}
  ((r(\omega )^2-\phi '^2)_+u|u)_{\omega \cdot -\phi  }\le
  ((r(\omega )^2-\phi '^2)_-u|u)_{\omega \cdot -\phi  }\,,
\end{equation}
\begin{equation}\label{L2.4}
  \|(r(\omega )^2-\phi '^2)_+^{1/2}u\|_{\omega \cdot -\phi  }\le
  \|(r(\omega )^2-\phi '^2)_-^{1/2}u\|_{\omega \cdot -\phi  }\,.
\end{equation}
We have in mind the case when $r(\omega )^2-(\phi ')^2>0$ away from a
bounded neighborhood of $t=0$.

\par Let $S(t)=e^{tA}$, $t\ge 0$ and let $m(t)>0$ be a continuous function such
that
\begin{equation}\label{L2.5}
\| S(t)\|\le m(t),\ t\ge 0.
\end{equation}
Then we get
\begin{equation}\label{L2.6}
  \| (r(\omega )^2-\phi '^2)_+^{1/2}u\|_{\omega \cdot -\phi  }
  \le \| (r(\omega )^2-\phi '^2)_-^{1/2}m\|_{\omega \cdot -\phi 
  }|u(0)|_{\cal H}.
\end{equation}
 Note that we have also trivially
\begin{equation}\label{L2.6bis}
  \| (r(\omega )^2-\phi '^2)_-^{1/2}u\|_{\omega \cdot -\phi  }
  \le \| (r(\omega )^2-\phi '^2)_-^{1/2}m\|_{\omega \cdot -\phi 
  }|u(0)|_{\cal H}.
\end{equation}

\par We get the same bound for the forward solution of $A^*-\partial
_t$ and, after changing the orientation of time, for the backward
solution of $A^*+\partial _t=(A-\partial _t)^*$. Then for $u^*(s)$,
solving
$$
(A^*+\partial _s)u^*(s)=0,\ s\le t,
$$
with $u^*(t)$ prescribed, we get
\begin{equation*}
  \| (r(\omega )^2-\iota_t\phi '^2)_+^{1/2}u^*\|_{\omega (t-\cdot )-\iota_t \phi }
  \le
  \| (r(\omega )^2-\iota_t \phi '^2)_-^{1/2}\iota_t m
  \|_{\omega (t-\cdot )-\iota_t \phi }\,  |u^*(t)|_{\mathcal H}\,,
\end{equation*}
where $\iota_t \phi $ and $\iota_t m $ denote the
compositions of $\phi $ and $m$ respectively with the reflection $\iota_t$  in
$t/2$ so that $$\iota_t m (s)=m(t-s),\ \ \iota_t \phi (s)= \phi (t-s)\,.$$

 More generally, we can replace $\phi$ by $\psi$ with the same properties (see \eqref{defpropchi})
and consider $ \Psi  = \exp \psi\,.
$

 Note that we have 
\begin{equation}\label{L2.7}
  \| (r(\omega )^2-\iota_t \psi '^2)_+^{1/2}u^*\|_{ \omega (t-\cdot
    ) -\iota_t \psi  }
  \le \| (r(\omega )^2-\psi'^2)_-^{1/2}m\|_{\omega \cdot -\psi 
  }|u^*(t)|_{\cal H}.
\end{equation}
and  also trivially
\begin{equation}\label{L2.7bis}
  \| (r(\omega )^2-\iota_t \psi '^2)_-^{1/2}u^*\|_{ \omega (t-\cdot )-\iota_t \psi }
  \le \| (r(\omega )^2-\psi'^2)_-^{1/2}m\|_{\omega \cdot -\psi 
  }|u^*(t)|_{\cal H}.
\end{equation}

\subsection{From $L^2$ to $L^\infty $ bounds}\label{LL} 
 In order to estimate
$|u(t)|_{{\cal H}}$ for a given $u(0)$ it suffices to estimate
$[u(t)|u^*(t)]_{\cal H}$ for arbitrary $u^*(t)\in {\cal H}$. Extend
$u^*(t)$ to a backward solution $u^*(s)$ of \break $(A^*+\partial
_s)u^*(s)=0$, so that
$$
[u(s)|u^*(s)]_{\cal H}=[u(t)|u^*(t)]_{\cal H},\ \forall s\in [0,t]. 
$$

\par Let $M=M_t:[0,t]\to [0,+\infty [$ have mass 1:
\begin{equation}\label{LL.1}
\int_0^t M(s)ds=1.
\end{equation}
Then
\begin{equation}\label{LL.1.5}
  |[u(t)|u^*(t)]_{\cal H}|=\left| \int_0^t M(s)[u(s)|u^*(s)]_{\cal H} ds \right|
  \le \int_0^t M(s) |u(s)|_{\cal H}|u^*(s)|_{\cal H} ds. \end{equation}
 Let $\epsilon _1,\epsilon_2 \in \{-,+\}$.  Assume that
\begin{equation}\label{LL.2}
  \mathrm{supp\,}M\subset \{ s; \epsilon_1(r(\omega )^2-\phi '(s)^2) > 0,\
\epsilon_2(  r(\omega )^2-\iota_t\psi '(s)^2) > 0  \}. 
\end{equation}

Then multiplying and dividing with suitable factors in the last member
of (\ref{LL.1.5}), we get
 
\begin{equation*} 
\begin{split}
  \left| [u(t)|u^*(t)]_{\cal H} \right|& \le e^{\omega t}\int_0^t
  \frac{M(s)e^{-\phi (s)-\iota_t \psi (s)}} {(r(\omega )^2-\phi
    '(s)^2)^{\frac{1}{2}}_{\epsilon_1}(r(\omega )^2-\iota_t \psi
    '(s)^2)^{\frac{1}{2}}_{\epsilon_2}}\times\\ 
    & \qquad \times  e^{\phi (s)-\omega s} (r(\omega
  )^2-\phi '(s)^2)^{\frac{1}{2}}_{\epsilon_1}|u(s)|_{\cal H}\times \\ &\qquad \times  e^{\iota_t \psi
    (s)-\omega (t-s)}(r(\omega )^2-\iota_t \psi
  '(s)^2)^{\frac{1}{2}}_{\epsilon_2}|u^*(s)|_{\cal
    H}ds\\
     & \le
  e^{\omega t}\sup_{[0,t]} \frac{Me^{-\phi-\iota_t \psi }} {(r(\omega
    )^2-\phi '^2)^{\frac{1}{2}}_{\epsilon_1}(r(\omega )^2-\iota_t \psi
    '^2)^{\frac{1}{2}}_{\epsilon_2}}\times\\ & \qquad \times \|(r(\omega
  )^2-\phi '^2)^{\frac{1}{2}}_{\epsilon_1}u\|_{ \omega \cdot -\phi }\|(r(\omega )^2-\iota_t \psi
  '^2)_{\epsilon_2}^{\frac{1}{2}}u^*\|_{\omega (t-\cdot )-\iota_t \psi }.
  \end{split}
  \end{equation*}

Using (\ref{L2.6}), (\ref{L2.7}) when $\epsilon_j=+$ or   (\ref{L2.6bis}), (\ref{L2.7bis}) when $\epsilon_j=-$,   we get
\begin{equation*} 
\begin{split}
 \left| [u(t)|u^*(t)]_{\cal H} \right|& \le  e^{\omega t}\sup_{[0,t]} \frac{Me^{-\phi- \iota_t \psi }} {(r(\omega
    )^2-\phi '^2)^{\frac{1}{2}}_{\epsilon_1}(r(\omega )^2-\iota_t \psi
    '^2)^{\frac{1}{2}}_{\epsilon_2}}\times\\ &  \times 
  \|(r(\omega
  )^2-\phi '^2)^{\frac{1}{2}}_-m\|_{\omega \cdot -\phi  }\|(r(\omega )^2-\psi
  '^2)_-^{\frac{1}{2}}\, m\|_{\omega \cdot
-\psi     }|u(0)|_{\cal H}|u^*(t)|_{\cal H}.
 \end{split}
  \end{equation*}

Choosing $u^*(t)=u(t)$, gives
\begin{equation}\label{LL.3}
\begin{split}
  |u(t)|_{\cal H}& \le e^{\omega t}\sup_{[0,t]}
  \frac{Me^{-\phi-\iota_t \psi }} {(r(\omega )^2-\phi
    '^2)^{\frac{1}{2}}_{\epsilon_1}(r(\omega )^2-\iota_t \psi
    '^2)^{\frac{1}{2}}_{\epsilon_2}}\times\\ & \qquad \times
  \|(r(\omega)^2-\phi '^2)^{\frac{1}{2}}_-\, m\|_{\omega \cdot-\phi }
  \|(r(\omega)^2-\psi '^2)^{\frac{1}{2}}_-\, m\|_{ \omega
    \cdot -\psi }|u(0)|_{\cal H}.
\end{split}
\end{equation}
 In order to optimize the choice of $M$, we let $0\not\equiv F\in
 C([0,t];[0,+\infty [)$ and study
\begin{equation}\label{LL.4}
\inf_{0\le M\in C([0,t]),\atop \, \int Mds=1}\sup_s \frac{M(s)}{F(s)}.
\end{equation}
We first notice that
$$
1=\int Mds=\int \frac{M}{F}Fds\le \left(\sup_s \frac{M}{F} \right)\int Fds
$$
and hence
the quantity (\ref{LL.4}) is $\ge 1/\int Fds$. Choosing $M=\theta F$ 
with $\theta =1/\int F(s)\,ds$, we get equality.
 \footnote{ $M$  does not necessarily satisfy condition \eqref{LL.2} but we can proceeed via a limiting argument.}

\begin{lemma}\label{LL1} For any continuous function $F\geq 0$, non identically $0$,
$$
\inf_{0\le M\in C([0,t]),\atop \, \int M(s)\,ds=1}\  \left(\sup_s \frac{M}{F} \right)  = 1/\int Fds\,.
$$
\end{lemma}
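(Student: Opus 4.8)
The plan is to prove the two inequalities separately; the displayed computation just before the statement already contains both halves, so the work is only to record them cleanly and to handle the points where $F$ vanishes (there $M/F$ is read with the convention $0/0=0$, equivalently the supremum is taken over $\{s:F(s)>0\}$). Note first that $\int_0^t F(s)\,ds>0$: being continuous, nonnegative and not identically zero, $F$ is bounded below by a positive constant on some subinterval, so the right-hand side of the asserted identity makes sense.

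For the lower bound, fix an admissible $M$, i.e. $0\le M\in C([0,t])$ with $\int_0^t M\,ds=1$, and put $C:=\sup_s M(s)/F(s)$. If $C=+\infty$ there is nothing to prove, so assume $C<\infty$. Then $M$ must vanish at every zero of $F$: if $F(s_0)=0$ but $M(s_0)>0$, continuity of $M$ keeps $M$ bounded below near $s_0$ while $F\to 0$ there, forcing $M/F\to+\infty$, a contradiction. Hence $1=\int_0^t M\,ds=\int_{\{F>0\}}(M/F)\,F\,ds\le C\int_0^t F\,ds$, so $C\ge 1/\int_0^t F\,ds$; taking the infimum over $M$ gives the inequality ``$\ge$''.

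For the reverse inequality I would exhibit a minimizer: take $M=\theta F$ with $\theta=1/\int_0^t F\,ds$. Then $M$ is continuous, nonnegative, and has total mass $1$, hence admissible; moreover $M/F\equiv\theta$ on $\{F>0\}$ while $M=0$ on $\{F=0\}$, so $\sup_s M/F=\theta=1/\int_0^t F\,ds$. This shows the infimum is attained and equals $1/\int_0^t F\,ds$, completing the proof.

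The only mildly delicate point — the nearest thing to an obstacle — is the bookkeeping around the zero set of $F$, settled above by the elementary continuity argument; once that is in place, both estimates are one-line computations, and in particular the infimum in the statement is in fact a minimum.
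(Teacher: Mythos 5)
Your proof is correct and follows essentially the same route as the paper: the lower bound via $1=\int (M/F)F\,ds\le(\sup M/F)\int F\,ds$ and the upper bound via the explicit choice $M=F/\int F\,ds$. The only difference is that you spell out the bookkeeping on the zero set of $F$, which the paper leaves implicit (it instead handles the support issue via condition (\ref{LL.2}) and a limiting argument in the surrounding text).
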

Applying the lemma to the
supremum in (\ref{LL.3})  with  
$$F= e^{\phi+\iota_t \psi}\, (r(\omega
    )^2-\phi '^2)^{\frac{1}{2}}_{\epsilon_1}(r(\omega )^2-\iota_t \psi
    '^2)^{\frac{1}{2}}_{\epsilon_2},$$ 
we get 
\begin{equation}\label{LL.5a}
 |u(t)|_{\cal H} \le  e^{\omega t}\frac{\|(r(\omega
  )^2-\phi '^2)^{\frac{1}{2}}_-\, m\|_{\omega \cdot -\phi  } \|(r(\omega
  )^2- \psi '^2)^{\frac{1}{2}}_-\, m\|_{ \omega \cdot -\psi } }
{\int_0^t e^{\phi + \iota_t \psi }(r(\omega
    )^2-\phi '^2)^{\frac{1}{2}}_{\epsilon_1}(r(\omega )^2-\iota_t\psi
    '^2)^{\frac{1}{2}}_{\epsilon_2} ds}
  |u(0)|_{\cal H}.
\end{equation}
Since $u(0)$ is arbitrary, this is a rewriting of (\ref{LL.5}) and we get Theorem \ref{Th1.2}.
\begin{remark}\label{stupid}
If we do not impose any condition of the type (\ref{LL.2}), we get a
variant of Theorem \ref{Th1.2} which is easier to state, but probably
less sharp: Adding the squares of (\ref{L2.6}), (\ref{L2.6bis}), leads to
$$
\| |r(\omega )^2-\phi '^2|^{1/2}u\|_{\omega \cdot -\phi }\le \sqrt{2}
\| (r(\omega )^2-\phi '^2)_-^{1/2}m\|_{\omega \cdot -\phi }|u(0)|_{\mathcal{H}}
$$
Similarly, from (\ref{L2.7}), (\ref{L2.7bis}),
$$ 
\| |r(\omega )^2-\iota _t\psi '^2|^{1/2}u^*\|_{\omega (t-\cdot) -\psi }\le \sqrt{2}
\| (r(\omega )^2-\psi '^2)_-^{1/2}m\|_{\omega \cdot -\psi }|u^*(t)|_{\mathcal{H}}
$$
We then follow a simplified variant of the estimates after (\ref{LL.1.5}):
\begin{equation*} 
\begin{split}
  \left| [u(t)|u^*(t)]_{\cal H} \right|& \le e^{\omega t}\int_0^t
  \frac{M(s)e^{-\phi (s)-\iota_t \psi (s)}} {|r(\omega )^2-\phi
    '(s)^2|^{\frac{1}{2}}|r(\omega )^2-\iota_t \psi
    '(s)^2|^{\frac{1}{2}}}\times\\ 
    & \qquad \times  e^{\phi (s)-\omega s} |r(\omega
  )^2-\phi '(s)^2|^{\frac{1}{2}}|u(s)|_{\cal H}\times \\ &\qquad \times  e^{\iota_t \psi
    (s)-\omega (t-s)}|r(\omega )^2-\iota_t \psi
  '(s)^2|^{\frac{1}{2}}|u^*(s)|_{\cal
    H}ds\\
     & \le
  e^{\omega t}\sup_{[0,t]} \frac{Me^{-\phi-\iota_t \psi }} {|r(\omega
    )^2-\phi '^2|^{\frac{1}{2}}|r(\omega )^2-\iota_t \psi
    '^2|^{\frac{1}{2}}}\times\\ & \qquad \times \||r(\omega
  )^2-\phi '^2|^{\frac{1}{2}}u\|_{ \omega \cdot -\phi }\| |r(\omega )^2-\iota_t \psi
  '^2|^{\frac{1}{2}}u^*\|_{\omega (t-\cdot )-\iota_t \psi }\\
& \le 2 e^{\omega t}\sup_{[0,t]} \frac{Me^{-\phi- \iota_t \psi }} {|r(\omega
    )^2-\phi '^2|^{\frac{1}{2}}|r(\omega )^2-\iota_t \psi
    '^2|^{\frac{1}{2}}}\times\\ &  \times 
  \|(r(\omega
  )^2-\phi '^2)^{\frac{1}{2}}_-m\|_{\omega \cdot -\phi  }\|(r(\omega )^2-\psi
  '^2)_-^{\frac{1}{2}}\, m\|_{\omega \cdot
-\psi     }|u(0)|_{\cal H}|u^*(t)|_{\cal H}.
 \end{split}
  \end{equation*}
Choosing $u^*(t)=u(0)$ and applying Lemma \ref{LL1} gives the
following variant of (\ref{LL.5}),

\begin{equation}\label{LL.5var}\begin{split}
 &|| S (t)|| _{\mathcal L(\cal H)}\le \\  &2e^{\omega t}\frac{\|(r(\omega
  )^2\Phi^2 -\Phi '^2)^{\frac{1}{2}}_-\,m\|_{e^{\omega \cdot }L^2([0,t[) } \|(r(\omega)^2 \Psi
  ^2- \Psi '^2)^{\frac{1}{2}}_-\, m\|_{e^{\omega \cdot }L^2([0,t[) }}
{\int_0^t |r(\omega
    )^2\Phi^2 -\Phi '^2|^{\frac{1}{2}}\, |r(\omega )^2\iota_t \Psi^2-\iota_t \Psi
    '^2|^{\frac{1}{2}} ds}\,.
  \end{split}
\end{equation}
\end{remark}

Our goal is to show that starting from {\color{blue}(\ref{LL.5}),
  (\ref{LL.5a})} we can, by suitable choices of
$\Phi, \phi, \Psi,\psi, \epsilon_1,
\epsilon_2$, obtain and actually improve all the variants of
the previously obtained statements \cite{HelSj,W}. We will start by the analysis of two optimization problems which have their own independent interest.

\section{Optimizers}\label{s3}
\setcounter{equation}{0}
\subsection{Introduction}\label{ssint}
 Motivated  by Proposition  \ref{propminmax}, we study   in this section  the problem
of minimizing  an integral: 
\begin{equation}\label{red.1-intro}
 I_{\mathrm{inf}} (a) :=\inf_{\{u\in H^1(]0,a[);\, u(0)=0, u(a)=1\}} \int
_0^a (u'^2-u^2)_+m^2\, ds.
\end{equation}
 and of maximizing a similar
 integral:
 \begin{equation}\label{red.2-intro}
J_{\mathrm{sup}} (b):=\sup_{{\cal G}} \int
_0^b (\theta^2-\theta'^2)m^{-2} \,ds\,,
\end{equation}
where ${\cal G}$ is defined by
\begin{equation}\label{max.1}
{\cal G}=\{\theta \in H^1(]0,b[);\, |\theta '|\le \theta \mbox{ and } \ \theta
(b)=1 \}\, .
\end{equation}
The two problems are very similar, we devote most of the
section to the minimization problem in the  next four subsections and treat  more shortly the maximization problem in the last
Subsection \ref{ssmax}.
\subsection{Reduction}\label{red}
Let $0<m\in C^1([0,+\infty [)$.
If $0\le \sigma <\tau <+ \infty $ and $S,T\in \mathbb R$ we put
\begin{equation}\label{red.0.5}H_{S,T}^1(]\sigma ,\tau [)=\{ u\in H^1(]\sigma ,\tau [);\, u(\sigma
)=S,\ u(\tau )=T\}\,.\end{equation}
Here and in the following all functions are
assumed to be real-valued unless stated otherwise. In this section we
let $a\in ]0,+\infty [$ and 
study
\begin{equation}\label{red.1}
\inf_{u\in H^1_{0,1}(]0,a[)}I(u),\hbox{ where }I(u)=I_{]0,a[}(u)=\int
_0^a (u'^2-u^2)_+m^2ds.
\end{equation}
We shall show that we can here replace $H_{0,1}^1$ by a subspace that
allows to avoid the use of positive parts. Put
\begin{equation}\label{red.2}
{\cal H}={\cal H}_{0,a}^{0,1}\,,
\end{equation}
where for $\sigma ,\tau ,S,T$ as above,
\begin{equation}\label{red.3}
{\cal H}_{\sigma ,\tau }^{S,T}=\{u\in H_{S,T}^1(]\sigma ,\tau [); 0\le
u\le u'\}.
\end{equation}
Here the inequalities $0\le u$, $u\le u'$ are valid in the sense of
distributions, i.~e.  $u$ and $u'-u$ are positive distributions on
$]0,a[$.  Notice that if $S,T>0$ then for this space to be non-zero, it
is necessary that 
\begin{equation}\label{condnecST} 
T\ge e^{\tau -\sigma }S\,.
\end{equation}
\begin{prop}\label{red1}
  $$
\inf_{H_{0,1}^1}I(u)=\inf_{\cal H}I(u).
  $$
\end{prop}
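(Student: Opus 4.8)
The plan is to prove the two inequalities separately. Since ${\cal H}_{0,a}^{0,1}\subset H_{0,1}^1(]0,a[)$ and on ${\cal H}_{0,a}^{0,1}$ one has $0\le u\le u'$, so that $(u'^2-u^2)_+=u'^2-u^2$, the inequality $\inf_{H_{0,1}^1}I(u)\le \inf_{\cal H}I(u)$ is immediate from the definition. The content is the reverse inequality: given any $u\in H_{0,1}^1(]0,a[)$, I want to produce $v\in {\cal H}_{0,a}^{0,1}$ with $I(v)\le I(u)$.

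The key construction is to modify $u$ on the "bad" set $\{u'^2\le u^2\}$, where the positive part kills the integrand, so that these modifications cost nothing. First I would reduce to the case $u\ge 0$: replacing $u$ by $|u|$ does not increase $u'^2$ (a.e. $|u|'=\pm u'$) nor change $u^2$, and keeps the boundary values $0,1$; so we may assume $0\le u$. Next, on the open set where $u'< u$ (in particular where $u'\le 0$), I would like to replace $u$ by a function that satisfies $v'=v$ there, i.e. locally of the form $Ce^s$, chosen so as to glue continuously and so that $(v'^2-v^2)_+=0$ on that set while $v$ does not increase where $u$ was already good. The natural candidate is the "exponential envelope" obtained by pushing $u$ up just enough to satisfy $u\le u'$: concretely, one can take $v(s)=\sup\{u(\sigma)e^{s-\sigma}:\ 0\le\sigma\le s\}$, or equivalently solve $v'=v$ started from below $u$ — this is the smallest function $\ge$ (a lower modification of) $u$ satisfying the differential inequality $v\le v'$, with $v(0)=0$. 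One checks that $v$ is locally Lipschitz, $0\le v\le v'$ in the distributional sense, $v\ge u$ is false in general — rather on the set where the constraint is inactive $v$ coincides with $u$, and where it is active $v'=v$ so the integrand vanishes; hence $I(v)=\int_{\{v=u,\,u'>u\}}(u'^2-u^2)\le \int_0^a(u'^2-u^2)_+ m^2\,ds=I(u)$. The boundary conditions need care: $v(0)=0$ holds by construction, and one must verify $v(a)=1$; if $v(a)>1$ one rescales or truncates near $s=a$ (this is where $a$ being finite and the endpoint value $1>0$ matter), absorbing the correction into a region where it does not increase the integral.

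The step I expect to be the main obstacle is making the envelope construction rigorous at the level of $H^1$/distributions: verifying that $v$ as defined lies in $H^1(]0,a[)$ (not just Lipschitz away from degeneracies), that the distributional inequalities $0\le v$ and $v\le v'$ genuinely hold (this requires a careful argument at the junction points between the "active" arcs where $v=Ce^s$ and the "passive" arcs where $v=u$, using that at such junctions the left derivative of the exponential piece is $\le$ the right derivative of $u$, so no negative Dirac mass appears in $v'-v$), and that the set identities $\{v'^2-v^2>0\}\subset\{v=u\}$ up to measure zero are correct so that $I(v)\le I(u)$. A clean way to organize this is to first treat $u\in C^1$ (or Lipschitz) — which by the density remark after \eqref{defpropchi}, or by a direct mollification argument, suffices — construct the envelope there by an explicit ODE comparison, obtain the inequality $I(v)\le I(u)$ with $v\in{\cal H}_{0,a}^{0,1}$, and then pass to the infimum. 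With the reduction to smooth competitors, the junction analysis becomes elementary calculus and the only remaining point is the endpoint adjustment at $s=a$.
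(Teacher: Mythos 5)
Your plan is correct in substance, but it follows a genuinely different route from the paper's proof. The paper proceeds by three successive reductions: first an induction on the number of critical points of a $C^2$ Morse approximation, which replaces $u$ by an increasing piecewise $C^1$ competitor with no larger energy; then a smoothing step to get $u'>0$; and finally, for such $u$, the solution $v$ of $v'=\sqrt{v^2+(u'^2-u^2)_+}$, $v(0)=0$, for which $I(v)=I(u)$ by construction and a Gronwall-type comparison gives $v\ge u$, so that $\widetilde u=v/v(a)\in{\cal H}$ satisfies $I(\widetilde u)\le I(u)$. Your exponential envelope $v(s)=\sup_{0\le\sigma\le s}u(\sigma)e^{s-\sigma}$ collapses all of this into a single step valid for an arbitrary $H^1$ competitor: $e^{-s}v(s)$ is the running maximum of the absolutely continuous function $e^{-\sigma}u(\sigma)$, hence $v\in H^1$ with $0\le v\le v'$ a.e.; on each connected component of the open set $\{v>u\}$ the running maximum is constant, so $v=Ce^s$ there and the integrand vanishes; on $\{v=u\}$ one has $v'=u'$ a.e., whence $I(v)\le I(u)$. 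This is more elementary (no Morse induction, no smoothing, no ODE comparison) and even gives the inequality $I(v)\le I(u)$ with possible strict decrease before normalization, whereas the paper's ODE preserves the energy exactly.

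One correction to your write-up: with the formula you give, $v\ge u$ \emph{does} hold everywhere (take $\sigma=s$ in the supremum), contrary to your aside that it ``is false in general''. This inequality is precisely what you need at the endpoint: it gives $v(a)\ge u(a)=1$, so the normalization $\widetilde v=v/v(a)$ lands in ${\cal H}_{0,a}^{0,1}$ and multiplies $I$ by $v(a)^{-2}\le 1$; no truncation near $s=a$ is needed, and the case $v(a)<1$ (which would ruin the argument, since multiplying by a constant larger than $1$ increases $I$) cannot occur. The remaining technical points you flag -- membership of $v$ in $H^1$, the distributional inequalities, and the identity $v'=u'$ a.e.\ on the coincidence set -- are all standard properties of running maxima of Sobolev functions, so the plan does go through.
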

\begin{proof} 
Clearly \begin{equation}\label{red.3.5}\inf_{H_{0,1}^1}I(u)\le \inf_{\cal H}I(u). \end{equation} We need to
establish the opposite inequality. 
\paragraph{Step 1.} 
We first show
\begin{equation}\label{red.4}
\inf_{u\in H_{0,1}^1}I(u)\ge \inf _{\{u\in H_{0,1}^1;\, u'\ge 0\}} I(u).
\end{equation}

In the left hand side, we can replace $H_{0,1}^1$ by the
dense subspace of Morse functions of class $C^2$  in $[0,a]$   where $0,1$ are not critical values, $u(0)=0$, 
$u(a)=1$.\\ 
We shall see that we can replace $u$ by a piecewise $C^1$ function\footnote{\label{fnpiecewise}
We say that $u=[0,a]\mapsto \mathbb R$ is piecewise $C^1$ if $u\in C^0([0,a])$ and $u'$ is piecewise continuous, i.e. with at most finitely many jump discontinuities. We denote by $C^1_{pw} ([0,a])$ the space of piecewise $C^1$ functions.} $v$  on $[0,a]$ with $v' \geq 0$,  $v(0)=0$, $v(a)=1$, s.t. $I(v)\leq I(u)$.\\
Let $M(u) \geq 0$ be the number of critical points of $u$ in $]0,a[$. If $M(u)=0$, then $u$ is increasing and we are done.\\
Assume that we can construct $v$ as above\footnote{ \label{dil_ext} Notice that
  by affine dilations in $s$, $u$ we  have the
  seemingly more general statement that if $\tilde u$ is a $C^2$ Morse
  function on $[\sigma,\tau]$, where
  $-\infty < \sigma < \tau < +\infty$,
  $\tilde u (\sigma) < \tilde u (\tau)$, and $\tilde u(\sigma)$,
  $\tilde u(\tau)$ are not critical values, then there is a piecewise
  $C^1$ function on $[\sigma,\tau]$, such that, $\tilde v' \geq 0$,
  $\tilde v(\sigma)=\tilde u (\sigma)$,
  $\tilde v(\tau)=\tilde u(\tau)$, and
  $I_{]\sigma,\tau[}(\tilde v) \leq I_{]\sigma,\tau[} (\tilde u)$.}
when $M(u) \leq M$ for some $M\in \mathbb N$,  and let us show
  that we can do the same when \begin{equation}\label{red.4.5}M(u)=M+1\end{equation} and we now consider that case.
\par 
Let $\sigma =\sup_{u(s)=0} s$. Then $u(\sigma)=0$ and $u(s) >0$ for $s>\sigma$.
If $\sigma >0$, then $u$ has at least one critical point in
$]0,\sigma[$ and hence $u$ has at most  $ M$ critical
points in $ ]\sigma,a[$. Our induction hypothesis applies to
$ u_{|_{]\sigma,a[}}$ so there is an increasing piecewise $C^1$
function $\tilde v$ on $[\sigma,a]$ with
$\tilde v(\sigma) =0\,,\, \tilde v(a) =1$ such that
$$ 
I_{]\sigma,a[} (\tilde v) \leq I_{]\sigma,a[} (u)\,.
$$
We then get the desired conclusion with
$ v= 1_{]\sigma,a[} \tilde v$, and we have reduced the proof to
the case when $u(s) >0$ for $s>0$.
\par Similarly we get a reduction to the case when $u(s) <1$ for
$s<a$, so we can now assume that $u(s) \in ]0,1[$  for
$0<s<a$ (and that 
  (\ref{red.4.5}) holds).
\par When $s$ increases from $0$ to $ a $, $u$ will first increase
until it reaches a non-degenerate local maximum at some point $s_0\in ]0,a[$
with $u(s_0)\in ]0,1[$, then $u'<0$ on some interval
$]s_0,s_0+\epsilon[$.
Choose $\sigma \in ]s_0,s_0+\epsilon[$ and put $v_1(s)=\min
(u(s),u(\sigma ))$, $0\le s\le \sigma $. Then $v_1\in
C^1_{pw}([0,\sigma ])$, $v_1\ge 0$, $v_1(\sigma )=u(\sigma )$ and
$I_{]0,\sigma [}(v_1)\le I_{]0,\sigma [}(u)$.

\par Clearly $u_{|_{]\sigma,a[}}$ has $M$
critical points  and by the induction assumption (cf.\ Footnote \ref{dil_ext}) we have  a piecewise $C^1$ function $v_2$ on $[\sigma,a]$ with $v'_2 \geq 0$, $v_2(\sigma)=u(\sigma)$, $v_2(a)=1$ s.t. $ I_{]\sigma,a[}(v_2) \leq I_{]\sigma,a[}(u)$.
We get the desired conclusion with $v=1_{[0,\sigma]} v_1 + 1_{]\sigma,a]} v_2$.
 
 \paragraph{Step 2.} 
Let $u\in H_{0,1}^1$ with $u'\ge 0$. Then $u'\in L^2(]0,a[)\subset
L^1(]0,a[)$ has mass 1 and we can find a sequence $ v_j\in C^\infty
([0,a];]0,\infty [)$, $j=1,2,...$ such that
$$
v_j>0,\ \ \int_0^av_jds=1,\ \ v_j\to u' \hbox{ in } L^2.  
$$
If $u_j(s)=\int_0^sv_j(\sigma )d\sigma $, we have $u_j'>0$,
$u_j(0)=0$, $u_j(a)=1$ and $u_j\to u$ uniformly and hence in
$L^2$. Since $u_j'\to u'$ in $L^2$ we have that $u_j\to u$ in
$H_{0,1}^1$. From (\ref{red.4}) we then get 
\begin{equation}\label{red.4.7}
\inf_{u\in H_{0,1}^1}I(u)\ge \inf _{\{u\in H_{0,1}^1\cap C^\infty ([0,a]);\, u'> 0\}} I(u).
\end{equation}

\paragraph{Step 3.}
 Now, let $u\in H_{0,1}^1 \cap C^1([0,a])$ satisfy $u' > 0$  and let us construct $\widetilde{u}\in
{\cal H}$ such that $I(\widetilde{u})\le I(u)$. Let $v\in C^1(]0,a[)$
satisfy
\begin{equation}\label{red.5}
v'^2-v^2=(u'^2-u^2)_+,\ v(0)=0,\ v'\ge 0.
\end{equation}
 We can then apply the global Cauchy-Lipschitz theorem to
$$v'=\sqrt{v^2 +\phi}\,,\, v(0)=0\,,$$ with  $\phi =(u'^2-u^2)_+ \geq
0$. \\ The function $f(x,v):= \sqrt{v^2 +\phi (x)}$ is indeed Lipschitz
in $v$  along the graph of $v$, since $v^2+\phi >0$.  Then $v'\ge v \,  \geq 0$ and we now claim
 that $v\ge u$. 
 From (\ref{red.5}), we get indeed
$$
v'^2-v^2\ge u'^2-u^2,
$$
which can be rewritten as 
$$
v'^2-u'^2\ge v^2-u^2\,.
$$
Factorizing both members in the last estimate and dividing with \break 
$v'+u' \geq u' >0$, we get
\begin{equation}\label{red.6}
(v-u)'\ge \frac{v+u}{v'+u'}(v-u).
\end{equation}
Here $(v+u)/(v'+u')\ge 0$, so the differential inequality
(\ref{red.6})   and $v(0)-u(0)=0$ imply  that
\begin{equation}\label{red.7}
v-u\ge 0.
\end{equation}
In particular, $v(a)\ge u(a)=1$. By (\ref{red.5}) we have
$I(v)=I(u)$. \\
Put $\widetilde{u}=v(a)^{-1}v\in {\cal H}$. Then
$$
I(\widetilde{u})=v(a)^{-2}I(v)=v(a)^{-2}I(u)\le I(u).
$$

\paragraph{End of the proof.}
Putting all the steps together, we can  for any $u\in H^1_{0,1}$,  construct a sequence $\widetilde u_n$ in $C^\infty([0,a])\cap H_{0,1}^1 (]0,a[) $ such that such that $\widetilde u_n '>0$
 on $[0,a]$ and $I(\widetilde u_n) \leq I(u)+ \epsilon_n\,.$ with $\epsilon_n \rightarrow 0$.
Using Step 3  for $\tilde u_n$, we find $\widehat u_n \in \mathcal H$ such that $I(\widehat u_n) \leq I(\widetilde u_n)$.
This completes the proof of the lemma.
\end{proof}

\subsection{Existence of minimizers}\label{exi}
As above, let $a\in ]0,+\infty [$.
We show that the infimum above is attained, i.e.\ that minimizers exist.
\begin{prop}\label{exi1}
  There exists $u_0\in {\cal H}$ such that
  \begin{equation}\label{exi.1}
I_{\mathrm{inf}}(a):= \inf_{u\in {\cal H}}I_{]0,a[}(u)=I_{]0,a[}(u_0).
  \end{equation}
\end{prop}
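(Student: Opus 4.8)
The plan is to use the direct method of the calculus of variations on the space ${\cal H}={\cal H}_{0,a}^{0,1}$. First I would fix a minimizing sequence $(u_j)_{j\ge 1}\subset{\cal H}$, i.e.\ $u_j\in H^1_{0,1}(]0,a[)$ with $0\le u_j\le u_j'$ in the sense of distributions and $I_{]0,a[}(u_j)\to I_{\mathrm{inf}}(a)$; such a sequence exists since ${\cal H}\ne\emptyset$ (for instance $s\mapsto (e^a-1)^{-1}(e^s-1)$ lies in ${\cal H}$) and $I\ge 0$ is finite on ${\cal H}$. The feature of ${\cal H}$ that makes it convenient is that on it $(u'^2-u^2)_+=u'^2-u^2$, so $I(u)=\int_0^a(u'^2-u^2)m^2\,ds$ with no positive part; moreover each $u_j$ is nondecreasing (because $u_j'\ge u_j\ge 0$ a.e.) with $u_j(0)=0$, $u_j(a)=1$, hence $0\le u_j\le 1$ on $[0,a]$.

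Next I would establish an a priori $H^1$-bound. Since $0<m\in C^1([0,+\infty[)$, there are constants $0<c_0\le c_1<\infty$ with $c_0\le m\le c_1$ on $[0,a]$. From $\|u_j\|_{L^\infty}\le 1$ we get $\|u_j\|_{L^2(]0,a[)}^2\le a$, and
$$
\int_0^a u_j'^2\,ds\le \frac{1}{c_0^2}\int_0^a u_j'^2 m^2\,ds=\frac{1}{c_0^2}\Big(I(u_j)+\int_0^a u_j^2 m^2\,ds\Big)\le \frac{1}{c_0^2}\big(I(u_j)+c_1^2 a\big),
$$
which stays bounded because $I(u_j)$ converges. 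Hence $(u_j)$ is bounded in $H^1(]0,a[)$, and after passing to a subsequence I may assume $u_j\rightharpoonup u_0$ weakly in $H^1(]0,a[)$. By the compact embedding $H^1(]0,a[)\hookrightarrow C^0([0,a])$ in dimension one, $u_j\to u_0$ uniformly on $[0,a]$ along that subsequence, so $u_0(0)=0$, $u_0(a)=1$ and $u_0\ge 0$. Furthermore $u_j'\rightharpoonup u_0'$ weakly in $L^2$ and $u_j\to u_0$ in $L^2$, so $u_j'-u_j\rightharpoonup u_0'-u_0$ weakly in $L^2$; the cone of a.e.\ nonnegative functions is convex and strongly closed in $L^2$, hence weakly closed, so $u_0'-u_0\ge 0$. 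Thus $u_0\in{\cal H}$.

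It then remains to prove lower semicontinuity $I(u_0)\le\liminf_j I(u_j)$. Writing $I(u)=\int_0^a u'^2 m^2\,ds-\int_0^a u^2 m^2\,ds$ on ${\cal H}$, the second term is continuous along the sequence by the $L^2$-convergence $u_j\to u_0$, while $v\mapsto\int_0^a v^2 m^2\,ds$ is the square of a norm on $L^2(]0,a[)$ equivalent to the usual one (since $c_0\le m\le c_1$), hence convex and strongly continuous, therefore weakly lower semicontinuous; applied to $u_j'\rightharpoonup u_0'$ this gives $\int_0^a u_0'^2 m^2\,ds\le\liminf_j\int_0^a u_j'^2 m^2\,ds$. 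Combining the two yields $I(u_0)\le\liminf_j I(u_j)=I_{\mathrm{inf}}(a)$, and the reverse inequality holds trivially since $u_0\in{\cal H}$, so $I(u_0)=I_{\mathrm{inf}}(a)$.

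The argument is essentially routine; the one point deserving care is the stability of the pointwise constraints $0\le u\le u'$ under the weak $H^1$-limit, which I handle via weak closedness of the positive cone in $L^2$, together with the $H^1$-boundedness of the minimizing sequence, where the identity $I(u)=\int_0^a(u'^2-u^2)m^2\,ds$ on ${\cal H}$ and the automatic bound $0\le u_j\le 1$ are exactly what make the estimate close.
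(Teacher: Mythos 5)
Your proposal is correct and follows essentially the same route as the paper: the direct method, with an a priori $H^1$ bound on a minimizing sequence coming from $0\le u\le 1$ on ${\cal H}$ and the identity $I(u)=\int_0^a(u'^2-u^2)m^2\,ds$, weak $H^1$ compactness plus compact embedding to pass to the limit in the boundary values and the constraints, and weak lower semicontinuity of the gradient term. The only cosmetic differences are that you justify the stability of the constraint $0\le u\le u'$ via weak closedness of the positive cone (which the paper merely asserts) and use uniform convergence where the paper invokes convergence in $H^{3/4}$.
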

\begin{proof}
The proof is standard. We recall it for completeness.
Let $\| \cdot \| $ denote the norm in $L^2(]0,a[;m^2 ds)$  and define the norm in $H^1(]0,a[)$ by
$$
\| u\| _1^2 = \| u\| ^2 + \| \partial_s u \| ^2\,.
$$
Under our assumption on $m$, this norm is equivalent to the standard norm (corresponding to $m=1$).
Then 
$$
I_{]0,a[} (u)=\| u\| ^2_1 - 2 \|  u\| ^2\,.
$$
For $u\in \mathcal H$ we have $0\leq u \leq 1$, so $\| u\| ^2\leq
C_ma$, $C_m=\int_0^am^2ds$. Hence, if $u\in \mathcal H$ and  $I_{]0,a[} (u)\leq C$, we have 
$$
\| u\| ^2_1 \leq C + 2 \| u\| ^2 \leq C +2C_ma\,.
$$
A closed ball in $H^1(]0,a[)$ of finite radius is compact for the weak topology in $H^1$. It follows that every set $\{u\in \mathcal H;I_{]0,a[}(u) \leq C\} $ has the same property.\\
Let $u_1,u_2, \dots \in \mathcal H$ be a sequence such that $I_{]0,a[}(u_\nu) \rightarrow \inf_{\mathcal H} I_{]0,a[}$ as $\nu \rightarrow +\infty$.\\
After extracting a subsequence, we may assume that there exists $u_0\in \mathcal H$ such that
$$
u_\nu \rightharpoonup  u_0 \mbox{ in } H^1(]0,a[)\;,\; u_\nu \rightarrow u_0 \mbox{ in } H^{\frac 34} (]0,a[)\,.
$$
We then deduce by continuity of the trace that $u_0(0)=0$, $u_0(a)=1$. Also $0 \leq u_0 \leq u'_0$ in the sense of distributions. Hence $u_0\in \mathcal H$ and consequently 
$$ \inf_{u\in \mathcal H} I_{]0,a[}(u) 
 \leq I_{]0,a[}(u_0).
 $$
Clearly $\| u_\nu\| ^2 \rightarrow \| u_0\| ^2$. From 
$$
\| u_0\| _1 ^2= \lim_{\nu \rightarrow +\infty} (u_0,u_\nu)_1 \leq ||u_0||_1\, \limsup_{\nu \rightarrow +\infty} ||u_\nu||_1\,,
$$
we see that
$$
 \| u_0\| _1 \leq \limsup \|  u_\nu\| _1\,.
$$
Hence 
\begin{multline*}
I_{]0,a[}(u_0) = \| u_0\| ^2_1 -\| u_0\| ^2\\
 \leq \limsup (\| u_\nu\| _1^2 -\| u_\nu\| ^2) =\limsup I_{[0,a|} (u_\nu) =\inf_{u\in \mathcal H} I_{]0,a[}(u)\,.
\end{multline*}
\end{proof}

\par We have the following easy generalization. \\
Let $\sigma ,\tau
,S,T\in \mathbb R$, $\sigma <\tau $, $S,T\geq 0$, $T\ge e^{\tau -\sigma
}S$. Let $0<m\in C^1([\sigma ,\tau ])$ and define 
\begin{equation}\label{exi.3}
{\cal H}_{\sigma ,\tau }^{S,T}=\{ u\in H^1(]\sigma ,\tau [;\mathbb R);\,
u(\sigma )=S,\ u(\tau )=T,\ 0<u\le u'\}
\end{equation}
as in (\ref{red.3}).\\
We then wish to study
$$
\inf _{u\in {\cal H}_{\sigma ,\tau }^{S,T}}I_{]\sigma ,\tau [}(u),
$$
where
$$
I_{]\sigma ,\tau [}(u)=\int_\sigma ^\tau (u'^2-u^2)m^2ds.
$$
The preceding proposition has a straight forward generalization:
\begin{prop}\label{exi3}  There exists 
  $u_0\in {\cal H}_{\sigma ,\tau }^{S,T}$ such that
  \begin{equation}\label{exi.4}
\inf _{u\in {\cal H}_{\sigma ,\tau }^{S,T}}I_{]\sigma ,\tau [}(u)=I_{]\sigma ,\tau [}(u_0).
  \end{equation}
\end{prop}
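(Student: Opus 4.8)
The plan is to argue by the direct method of the calculus of variations, exactly as in the proof of Proposition~\ref{exi1}; the statement is a routine generalization and the only point where the argument genuinely changes is the a priori bound on the competitors, which in the case $S=0$, $T=1$, $[\sigma,\tau]=[0,a]$ came from $0\le u\le 1$. Before starting I would observe that on $\mathcal H^{S,T}_{\sigma,\tau}$ we have $u'\ge u\ge 0$, hence $u'^2-u^2\ge 0$, so $(u'^2-u^2)_+=u'^2-u^2$ and the two ways of writing $I_{]\sigma,\tau[}$ coincide on the admissible set. As before, since $[\sigma,\tau]$ is compact and $0<m\in C^1$, the weighted norm $\|u\|_1^2=\|u\|^2+\|u'\|^2$ with $\|\cdot\|$ the norm of $L^2(]\sigma,\tau[;m^2ds)$ is equivalent to the standard $H^1$ norm, and $I_{]\sigma,\tau[}(u)=\|u\|_1^2-2\|u\|^2$.

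The key new observation is that the constraint $0\le u\le u'$ (in the sense of distributions) forces $u$ to be non-decreasing, so any $u\in\mathcal H^{S,T}_{\sigma,\tau}$ satisfies $S=u(\sigma)\le u(s)\le u(\tau)=T$ on $[\sigma,\tau]$; hence $\|u\|^2\le T^2C_m$ with $C_m=\int_\sigma^\tau m^2\,ds$. Consequently, if $I_{]\sigma,\tau[}(u)\le C$ then $\|u\|_1^2\le C+2T^2C_m$, so every sublevel set of $I_{]\sigma,\tau[}$ in $\mathcal H^{S,T}_{\sigma,\tau}$ is bounded in $H^1(]\sigma,\tau[)$, hence relatively weakly compact. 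Here the hypothesis $T\ge e^{\tau-\sigma}S$, i.e. the necessary condition \eqref{condnecST}, guarantees that $\mathcal H^{S,T}_{\sigma,\tau}$ is non-empty, so that the infimum is finite.

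Then I would pick a minimizing sequence $u_\nu$, extract a subsequence with $u_\nu\rightharpoonup u_0$ in $H^1(]\sigma,\tau[)$ and $u_\nu\to u_0$ in $H^{3/4}(]\sigma,\tau[)$ (hence uniformly and in $L^2$). Continuity of the trace gives $u_0(\sigma)=S$ and $u_0(\tau)=T$; strong $L^2$ convergence gives $u_0\ge 0$; and $u'_\nu\rightharpoonup u'_0$ weakly in $L^2$, tested against non-negative test functions, gives $u'_0-u_0\ge 0$ as a distribution, so $u_0\in\mathcal H^{S,T}_{\sigma,\tau}$. Finally $\|u_\nu\|\to\|u_0\|$ while $\|u_0\|_1\le\liminf\|u_\nu\|_1$ by weak lower semicontinuity of the $H^1$ norm, whence
\[
I_{]\sigma,\tau[}(u_0)=\|u_0\|_1^2-2\|u_0\|^2\le\liminf\bigl(\|u_\nu\|_1^2-2\|u_\nu\|^2\bigr)=\lim I_{]\sigma,\tau[}(u_\nu)=\inf_{\mathcal H^{S,T}_{\sigma,\tau}}I_{]\sigma,\tau[},
\]
so $u_0$ is the desired minimizer. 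I do not expect a real obstacle: this is the same scheme as Proposition~\ref{exi1}, and the single place that could look delicate — the uniform $L^2$ control of the competitors, previously supplied by $0\le u\le 1$ — is here taken care of by the monotonicity already built into the constraint $0\le u\le u'$.
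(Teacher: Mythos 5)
Your proof is correct and follows exactly the route the paper intends: the paper gives no separate proof of Proposition \ref{exi3}, stating only that it is a ``straight forward generalization'' of Proposition \ref{exi1}, and your argument is precisely that generalization, with the single genuinely new point (the uniform $L^2$ bound $S\le u\le T$ coming from the monotonicity forced by $0\le u\le u'$) correctly identified and handled. Nothing to add.
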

In the situation of the last proposition we call $u_0$ a minimizer in
${\cal H}_{\sigma ,\tau }^{S,T}$.
\subsection{On $m$-harmonic functions}\label{mh}
\subsubsection{Minimizers and   $m$-harmonic functions} By \eqref{eq:defK}, we have $K_m= m^{-2} P_m$ where 
$$
P_m=-\partial _s\circ m^2\circ \partial _s -m^2. 
$$
If $0\le \sigma <\tau <+\infty $, we say that a function $u$ on $]\sigma ,\tau [$ is $m$-harmonic if
$P_mu=0$ on that interval.

\par  The operator  $K_m$ is an unbounded self-adjoint operator in
$L^2(]\sigma ,\tau [;m^2ds)$ when equipped with the domain ${\cal
  D}=(H_{0,0}^1\cap H^2)(]\sigma ,\tau [)$. 
  It has discrete spectrum,
contained in some interval $[-C,+\infty [$. If $\tau
  \le a$ for some fixed $a\in ]0,+\infty [$ and if $\tau -\sigma $ is
small enough\footnote{ \label{f1} More precisely,
  there exist $C,\epsilon _0>0$ such that, for $|\sigma-\tau| <
  \epsilon_0$, the  Dirichlet realization in $]\sigma,\tau[$ 
    (also denoted by $P_m$) satisfies the lower bound 
$m^{-2}P_m\ge \frac 1C$.}
we have
\begin{equation}\label{mh.1}
m^{-2}P_m\ge 1/{|\mathcal O (1) |}.\end{equation}

Then $P_m:H_{0,0}^1\cap H^2\to H_0$ is a bijection and it is straight
forward to see that for all $S,T\in \mathbb R$, the problem
\begin{equation}\label{mh.2}
  \begin{cases}
    P_mu=0\hbox{ on }]\sigma ,\tau [,\\
    u(\sigma )=S,\ u(\tau )=T,
  \end{cases}
\end{equation}
has a unique solution $u\in H^2(]\sigma ,\tau [)$. 
\par Indeed, let $f\in
C^2([\sigma ,\tau ])$ satisfy $f(\sigma )=S$, $f(\tau )=T$ and put
$u=f+\widetilde{u}$, where $\widetilde{u}\in H_{0,0}^1\cap H^2$ is the
unique solution in $H_{0,0}^1\cap H^2$ of $P_m\widetilde{u}=-P_mf$. We
denote by $u=f_{\sigma ,\tau }^{S,T}$ the unique solution of
(\ref{mh.2}).

\par The property (\ref{mh.1}) is equivalent to
\begin{equation}\label{mh.3}
I_{]\sigma ,\tau [}(u)\ge \frac{1}{C}\| u\|_{H^1}^2,\ \forall u\in
H_{0,0}^1(]\sigma ,\tau [).
\end{equation}
 Recall the definition of
$
H^1_{S,T}(]\sigma ,\tau [)$ in \ref{red.0.5}.
A general element $u\in H_{S,T}^1$ can be written
\begin{equation}\label{mh.4}
u=f+\widetilde{u},\ \widetilde{u}\in H_{0,0}^1(]\sigma ,\tau [),
\end{equation}
where $f=f^{S,T}_{\sigma ,\tau }$. \\
We have with $I=I_{]\sigma ,\tau
  [}$:
\begin{equation}\label{mh.5}
  \begin{split}
I(u)&=I(\widetilde{u})+2\int_\sigma ^\tau
(f'\widetilde{u}'-f\widetilde{u})m^2ds+I(f)\\
&\ge
\frac{1}{C}\|\widetilde{u}\|^2_{H^1}-C_{S,T}\|\widetilde{u}\|_{H^1}-C_{S,T}\\
&\ge \frac{1}{2C}\|\widetilde{u}\|_{H^1}^2-\widetilde{C}_{S,T}\,.
  \end{split}
\end{equation}
Thus $$\|\widetilde{u}\|_{H^1}^2\le \mathcal O (1) (I(u)+1)\,,$$
 and combining this
with the estimate $$\|u\|_{H^1}^2\le 2(\|\widetilde{u}\|_{H^1}^2+\|
f\|_{H^1}^2 ),$$ we get
\begin{equation}\label{mh.6}
\|u\|_{H^1}^2\le C_{S,T}(I(u)+1),
\end{equation}
with a new constant $C_{S,T}$.
\begin{prop}\label{mh1}
Let $\sigma ,\tau $ satisfy (\ref{mh.1}) or equivalently (\ref{mh.3})
and fix $S,T\in \mathbb R$. Then there exists $ u_0\in H^1_{S,T}(]\sigma ,\tau
[)$ such that
\begin{equation}\label{mh.7}
I_{]\sigma ,\tau [}(u_0)=\inf_{u\in H^1_{S,T}(]\sigma ,\tau
  [)}I_{]\sigma ,\tau [}(u).
\end{equation}
The minimizer $u_0$ is equal to the unique solution $f_{\sigma ,\tau }^{S,T}$ of
(\ref{mh.2}) and hence belongs to $H^2(]\sigma ,\tau [)$.
\end{prop}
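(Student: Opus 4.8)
The plan is to argue by the direct method of the calculus of variations and then to identify the minimizer through its Euler--Lagrange equation. Fix $f=f_{\sigma ,\tau }^{S,T}$, the unique $m$-harmonic function constructed just before the statement, and write any $u\in H^1_{S,T}(]\sigma ,\tau [)$ as $u=f+\widetilde u$ with $\widetilde u\in H^1_{0,0}(]\sigma ,\tau [)$. Expanding the quadratic functional exactly as in (\ref{mh.5}) gives $I(u)=I(\widetilde u)+2\int_\sigma ^\tau (f'\widetilde u'-f\widetilde u)m^2\,ds+I(f)$, so minimizing $I$ over $H^1_{S,T}$ amounts to minimizing $\widetilde u\mapsto I(\widetilde u)+L(\widetilde u)$ over the Hilbert space $H^1_{0,0}(]\sigma ,\tau [)$, where $L$ is the bounded linear form $\widetilde u\mapsto 2\int_\sigma ^\tau (f'\widetilde u'-f\widetilde u)m^2\,ds$. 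By (\ref{mh.3}) the symmetric bilinear form associated with $I$ is coercive on $H^1_{0,0}$, and it is plainly continuous, so this is a standard strictly convex, coercive, continuous quadratic minimization problem: it admits a unique minimizer $\widetilde u_0$. Setting $u_0=f+\widetilde u_0$ yields at once existence \emph{and} uniqueness of the minimizer in $H^1_{S,T}$.

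Alternatively, and in the spirit of the proof of Proposition \ref{exi1}, one can run the direct method by hand: by (\ref{mh.6}) any minimizing sequence $(u_\nu)$ in $H^1_{S,T}$ is bounded in $H^1(]\sigma ,\tau [)$; after extracting a subsequence $u_\nu\rightharpoonup u_0$ weakly in $H^1$ and, by the compact embedding, $u_\nu\to u_0$ in $L^2$; the trace being continuous for the weak $H^1$ topology, $u_0\in H^1_{S,T}$, and writing $I(u)=\|u'\|^2-\|u\|^2$ in $L^2(]\sigma ,\tau[;m^2ds)$ the first term is weakly lower semicontinuous while the second is weakly continuous, whence $I(u_0)\le \liminf I(u_\nu)=\inf_{H^1_{S,T}}I$.

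To identify $u_0$, note that since $I$ is quadratic, for every $v\in H^1_{0,0}(]\sigma ,\tau [)$ the map $t\mapsto I(u_0+tv)$ is a parabola with minimum at $t=0$, so its derivative vanishes there:
\begin{equation*}
\int_\sigma ^\tau (u_0'v'-u_0v)\,m^2\,ds=0,\qquad \forall v\in H^1_{0,0}(]\sigma ,\tau [).
\end{equation*}
This is precisely the weak formulation of $P_mu_0=0$ on $]\sigma ,\tau [$; by one-dimensional elliptic regularity (the coefficient $m^2$ being $C^1$ and bounded away from $0$) we get $u_0\in H^2(]\sigma ,\tau [)$, and $u_0$ retains the boundary values $u_0(\sigma )=S$, $u_0(\tau )=T$. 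Hence $u_0$ solves (\ref{mh.2}), and by the uniqueness of the solution of that problem established above, $u_0=f_{\sigma ,\tau }^{S,T}$.

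The only mildly delicate points are the weak lower semicontinuity of $I$ --- which, because of the sign of the $-u^2$ term, genuinely requires the compactness of $H^1(]\sigma ,\tau [)\hookrightarrow L^2(]\sigma ,\tau [)$ --- and the observation that hypothesis (\ref{mh.1}), in its equivalent form (\ref{mh.3}), is exactly what renders the quadratic form positive definite on $H^1_{0,0}$, so that the variational problem is well posed and its solution unique. Both are routine once the splitting $u=f+\widetilde u$ is in place, and the first has essentially been carried out already in the proof of Proposition \ref{exi1}.
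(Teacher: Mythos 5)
Your proposal is correct and follows essentially the same route as the paper's (very terse) proof: the direct method adapted from Proposition \ref{exi3} via the coercivity estimate (\ref{mh.6}), followed by the standard Euler--Lagrange identification with $f_{\sigma,\tau}^{S,T}$ and one-dimensional elliptic regularity to get $H^2$. Your first paragraph (splitting off $f$ and invoking strict convexity and coercivity of the resulting quadratic on $H^1_{0,0}$, i.e.\ Lax--Milgram) is a clean packaging of the same ingredients that moreover delivers uniqueness directly, but it is not a genuinely different argument.
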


\begin{proof}
Thanks to (\ref{mh.6}) we can adapt the proof of Proposition \ref{exi3}
to see that there exists  $u_0\in H^1_{S,T}(]\sigma ,\tau [)$, satisfying
(\ref{mh.7}). The standard variational argument then shows that
$u_0=f^{S,T}_{\sigma ,\tau }$ solves (\ref{mh.2}) and is therefore the
unique minimizer. $P_m$ being elliptic,  we have $u_0\in H^2(]\sigma ,\tau [)$.
\end{proof}

\begin{remark} \label{mh2}
  Let $0\le \sigma <\tau $, $0\le S<T$, with $T\ge e^{\tau -\sigma }S$
  as in \eqref{condnecST} and assume that (\ref{mh.1})
    holds on $]\sigma ,\tau [$. If $u_0:=f^{S,T}_{\sigma ,\tau }$
  belongs to ${\cal H}_{\sigma ,\tau }^{S,T}$ (i.e.\ if
  $0\le u_0\le u_0'$), then $u_0$ is equal to the unique minimizer in
  $H_{S,T}^1(]\sigma ,\tau [)$ and hence it is a minimizer in the
  smaller space ${\cal H}_{\sigma ,\tau }^{S,T}$. If $u_1$ is another
  minimizer in that space, then $I(u_1)=I(u_0)$, so it is also a
  minimizer in $H_{S,T}^1$ and by the uniqueness in that space,
  $u_1=u_0$.
\end{remark} 

\begin{remark}\label{mh3}
  Let $u_0$ be a minimizer in ${\cal H}^{S,T}_{\sigma ,\tau }$, let
  $\sigma \le \widetilde{\sigma }<\widetilde{\tau }\le \tau $ and set
  $\widetilde{S}=u_0(\widetilde{\sigma })$,
  $\widetilde{T}=u_0(\widetilde{\tau })$. Then
  ${{u_0}_\vert}_{]\widetilde{\sigma },\widetilde{\tau }[}$ is a
  minimizer in
  ${\cal H}^{\widetilde{S},\widetilde{T}}_{\widetilde{\sigma
    },\widetilde{\tau }}$. If
  $f^{\widetilde{S},\widetilde{T}}_{\widetilde{\sigma
    },\widetilde{\tau }}$ belongs to
  ${\cal H}^{\widetilde{S},\widetilde{T}}_{\widetilde{\sigma
    },\widetilde{\tau }}$ (assuming (\ref{mh.1}) holds on $]\widetilde{\sigma
    },\widetilde{\tau }[$), then
  ${{u_0}_\vert}_{]\widetilde{\sigma },\widetilde{\tau
    }[}=f^{\widetilde{S},\widetilde{T}}_{\widetilde{\sigma
    },\widetilde{\tau }}$.
\end{remark}

\subsubsection{Riccati equations and  $m$-harmonic functions.}\label{sss3.4.2}
\par We next discuss $m$-harmonic functions from the point of view of
first order non-linear ODE's,  more specifically Riccati equations. Let $f$ be an $m$-harmonic function on $]\sigma ,\tau [$  such
that
\begin{equation}\label{mh.8}0<f\le f'.\end{equation}
(For some arguments we relax this condition somewhat,
  still assuming that $f,f'>0$.) 
Put $$\mu =m'/m\,.$$
 Then from $$(\partial _s\circ m^2\circ \partial
_s+m^2)f=0\,,$$
we get
\begin{equation}\label{mh.9}
(\partial _s^2+2\mu \partial _s+1)f=0\,.
\end{equation}
Writing $$\phi =\log  f \mbox{ and }  \psi =\phi '=f'/f\,,$$
we get
\begin{equation}\label{mh.10}
 \psi \ge 1 \mbox{ and } \phi ''+\phi '^2+2\mu \phi '+1=0\,.
\end{equation}
We can rewrite the last equation in one of the two equivalent forms
  \begin{equation}\label{mh.11}
    \psi '=-(\psi ^2+2\mu \psi +1)\ 
   \mbox{ or } 
\psi '=-2\left( \mu +\frac{1}{2}\left( \psi +\frac{1}{\psi } \right)
\right)\psi\,.
\end{equation} In the region $\psi >1$ we can determine more explicitly when
we have $\psi '>0\,$, i.e.\ when $$\psi ^2+2\mu \psi +1<0\,,$$ or equivalently
when
$$
\mu <-\frac{1}{2}\left(\psi +\frac{1}{\psi } \right).
$$
Here the right hand side is $\le -1$, so we have the necessary condition
that $$ \mu <-1\,.$$
 Assuming this to hold, we notice that $\psi ^2+2\mu
\psi +1$ vanishes precisely for $\psi =-\mu \pm
\sqrt{\mu ^2-1}$. Clearly, $-\mu +\sqrt{\mu ^2-1}>1$ when $\mu <-1$. A small
calculation (or using that the product of the two solutions is equal
to 1) shows that $-\mu -\sqrt{\mu ^2-1}<1$ when $\mu < -1$.\\ 

In conclusion, we have proven
\begin{lemma}\label{mh4}
  Consider a point $s$ where (\ref{mh.11}) holds and $\psi(s) >1$. Then:
  $$
\psi ' (s)>0\hbox{ if and only if  }\mu (s)  <-1\hbox{ and }1<\psi(s) <-\mu(s) +\sqrt{\mu ^2(s)-1}.
  $$
\end{lemma}
\par
  We now put
  $$
  f_+(s)=\begin{cases}
    1,\hbox{ if }\mu (s)\ge -1,\\
    -\mu (s)+\sqrt{\mu (s)^2-1},\hbox{ if }\mu (s)<-1\,.
  \end{cases}
  $$
  The last lemma tells us that
  \begin{equation}\label{mh.13}\psi '(s)\le 0,\hbox{ when (\ref{mh.11})  holds and } \psi (s)\ge f_+(s)\,.\end{equation}
This implies the following  nice control of solutions of (\ref{mh.11}) in the
direction of increasing  ``time'' $s$:\\
 If (\ref{mh.11}) holds for $\sigma <s<\tau $
and $\sigma <s_0<\tau $, then
\begin{equation}\label{mh.14}\psi (s)\le \max (\psi
  (s_0),\max_{[s_0,s]}f_+),\ s_0\le s<\tau\, .\end{equation}

\par Let $a\in ]0,+\infty [$ be fixed and assume that $0\le \sigma <\tau \le a$ with $\tau -\sigma $ small
enough, so that the Dirichlet realization of  $m^{-2}P_m$ is $\ge 1/|{\cal O}(1)|$ and let
$f=f^{S,T}_{\sigma ,\tau }$, so that $u=f$ satisfies
(\ref{mh.2}). We restrict the attention to a region
  $ \{ s \in ]0,a[\,;\, \psi (s)\in ]1/2,2C_0[\}$ where $C_0$ can be large but fixed. We
  have
\begin{equation}\label{mh.15}
\int_\sigma ^\tau \psi (s)ds=\int_\sigma ^\tau \partial _s\log  f ds=\log 
\frac{f(\tau )}{f(s)}=\log  \frac{T}{S}.
\end{equation}

\par Conversely, from (\ref{mh.15}), (\ref{mh.11}), we get $f(\tau
)/f(\sigma )=T/S$ and after multiplying $f$ with a suitable positive
constant, we get (\ref{mh.2}).

\par Consider   the differential equation (\ref{mh.11})  over an interval
$]\sigma ,\tau [$ with $1/2\le \psi \le 2C_0$ with $C_0 >1$ as above.
 If $\tau -\sigma $ is
small enough, we have a unique such solution if we prescribe
$\psi(\sigma)$ in the slightly smaller interval $]2/3,
  3 C_0/2[$ and  we get $\psi (s)=\psi (\sigma )+{\cal O}(s-\sigma )$. 
Hence we have 
\begin{equation}\label{mh.16}
m_{]\sigma ,\tau [}(\psi ) := \frac{1}{\tau -\sigma }\int_\sigma ^\tau \psi (s)ds=\psi (\sigma
)+{\cal O}(\tau -\sigma )\,.
\end{equation}
\par 
For $z\in ]2/3, 3 C_0/2[$, we define $\widetilde{m}_{\sigma,\tau}
(z):=m_{]\sigma ,\tau [}(\psi )$ where $\psi$ is the solution of (\ref{mh.11}) with $\psi(\sigma) = z$.  
$\widetilde{m}_{\sigma ,\tau }$ can be extended to a biholomorphic map from some fixed neighborhood of $ [1,C_0]$ in $\mathbb C$ onto a $(\sigma ,\tau  )$-dependent neighborhood of the same
type and \ref{mh.16} extends to the estimate:
$$
\widetilde{m}_{\sigma ,\tau }(z)=z+{\cal O}(\tau -\sigma ).
$$
The inverse map $\widetilde{m} \mapsto z$ satisfies trivially
$$
z=\widetilde{m}_{\sigma ,\tau }(z )+{\cal O}(\tau -\sigma ),
$$
and this holds uniformly for $\sigma ,\tau \in [0,a]$, $|\tau -\sigma
|\ll 1$.  (Once $z$ has been determined from some $\widetilde m$, 
we determine $\psi $  from the differential
equation (\ref{mh.11}) with initial condition $\psi(\sigma) =z$ and we have 
$\widetilde{m} =m_{]\sigma
  ,\tau [}(\psi )$\,.)

We can apply this to (\ref{mh.15}), that we write as
$$
\widetilde{m}_{\sigma ,\tau }(z)=m_{]\sigma ,\tau [ }(\psi )=\frac{1}{\tau -\sigma }\log  \frac{T}{S}.
$$
If $(\tau -\sigma )^{-1}\log  (T/S)\in
\mathrm{neigh\,}([1,C_0],\mathbb R)$,  we get\footnote{Here we use
  "neigh($A,B)$" as an abbreviation for "some neighborhood of $A$ in
  $B$".} a unique $z$ and a real solution $\psi $ of (\ref{mh.15}), 
(\ref{mh.11}) with
$\psi (\sigma )=z$,
\begin{equation}\label{mh.17}
\psi (s)=\frac{1}{\tau -\sigma }\log  \frac{T}{S}+{\cal O}(\tau -\sigma ),
\end{equation}
uniformly on $[\sigma ,\tau ]$. In particular, if
\begin{equation}\label{mh.18}
\frac{1}{\tau -\sigma }\log  \frac{T}{S}\ge 1+ \frac{1}{| \mathcal O(1)|},
\end{equation}
we get
\begin{equation}\label{mh.19}
\psi (s)\ge 1+  \frac{1}{|\mathcal O(1)|}-{\cal O}(\tau -\sigma )>1
\end{equation}
and we conclude that the corresponding solution
  $u=f_{\sigma ,\tau }^{S,T}$ belongs to ${\cal H}_{\sigma ,\tau
  }^{S,T}$.

  In conclusion:
\begin{prop}\label{mh5}
For every $C_0>1$, there exist  $ \epsilon_0>0$ and $C_1>0$ such that if $S,T>0\,$,
$0\le \sigma <\tau \le a\,$, $\tau -\sigma <\epsilon _0\,$, $2/3\le \ln
(T/S)/(\tau -\sigma )\le 3C_0/2\,$, then $f=f_{\sigma ,\tau }^{S,T}$
satisfies
$$\left| \frac{f'}{f}-\frac{\ln (T/S)}{\tau -\sigma } \right|\le
C_1(\tau -\sigma )\hbox{ on }]\sigma ,\tau [\,.$$
 In particular, if
$$
\frac{\ln (T/S)}{\tau -\sigma }\in [1+1/C_2,3C_0/2],
$$
where $C_2>0$, then
$$
\frac{f'}{f}-1\ge \frac{1}{C_2}-C_1(\tau -\sigma )\hbox{ on }]\sigma
,\tau [,
$$
hence $f'/f-1\ge 1/(2C_2)$ on $]\sigma ,\tau [$ and $f\in {\cal
  H}_{\sigma ,\tau }^{S,T}$, if $\tau -\sigma $ is small enough.
\end{prop}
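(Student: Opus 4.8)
This statement essentially packages the Riccati analysis that led to \eqref{mh.16}--\eqref{mh.19}, so the plan is to make the a priori control of $\psi:=f'/f$ rigorous via a short continuity (bootstrap) argument that exploits the boundedness of $\mu=m'/m$ on the compact interval $[0,a]$. First I would fix $\epsilon_0>0$ small enough that \eqref{mh.1} holds on every subinterval $]\sigma,\tau[\subset[0,a]$ with $\tau-\sigma<\epsilon_0$ (Footnote \ref{f1}); then $f=f^{S,T}_{\sigma,\tau}$ is the well-defined unique solution of \eqref{mh.2} and, by Proposition \ref{mh1}, the unique minimizer of $I_{]\sigma,\tau[}$ over $H^1_{S,T}(]\sigma,\tau[)$. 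Since $S,T>0$, $|f|$ lies in the same space with $I_{]\sigma,\tau[}(|f|)=I_{]\sigma,\tau[}(f)$, so $|f|=f$ by uniqueness, and $f$ cannot vanish at an interior point (there it would have a double zero for the ODE $(\partial_s^2+2\mu\partial_s+1)f=0$ and hence vanish identically). Thus $f>0$ on $[\sigma,\tau]$, $\psi=f'/f$ is a well-defined $C^1$ function solving the Riccati equation $\psi'=-(\psi^2+2\mu\psi+1)$ of \eqref{mh.11}, and by \eqref{mh.15} its mean over $]\sigma,\tau[$ equals $\beta:=\ln(T/S)/(\tau-\sigma)$, which the hypothesis places in $[2/3,3C_0/2]$.

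\textbf{A priori bound and bootstrap.} Next I would set $M:=\sup_{[0,a]}|m'/m|<\infty$ and $C_1:=4C_0^2+4MC_0+2$, which depends only on $C_0$, $m$ and $a$. Wherever $\psi(s)\in[1/2,2C_0]$, the Riccati equation gives $|\psi'(s)|\leq\psi(s)^2+2M\psi(s)+1\leq C_1$. Since $\psi$ is continuous on $[\sigma,\tau]$ with mean $\beta\in\,]1/2,2C_0[$, the intermediate value theorem provides $s_*\in[\sigma,\tau]$ with $\psi(s_*)=\beta$. Let $I_*$ be the connected component of $\{s\in[\sigma,\tau];\,\psi(s)\in\,]1/2,2C_0[\}$ containing $s_*$. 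On $I_*$ the bound above yields $|\psi(s)-\beta|=|\psi(s)-\psi(s_*)|\leq C_1|s-s_*|\leq C_1(\tau-\sigma)<C_1\epsilon_0$; shrinking $\epsilon_0$ so that $C_1\epsilon_0<\min(1/6,C_0/2)$, this forces $\psi$ to stay in $]\beta-\tfrac16,\beta+\tfrac{C_0}{2}[\subset\,]1/2,2C_0[$ on $\overline{I_*}$, so $\psi$ never reaches an endpoint value $1/2$ or $2C_0$ and therefore $I_*=[\sigma,\tau]$. Hence $|\psi(s)-\beta|\leq C_1(\tau-\sigma)$ for all $s\in\,]\sigma,\tau[$, which is the first assertion.

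\textbf{The ``in particular'' part.} If $\beta=\ln(T/S)/(\tau-\sigma)\geq1+1/C_2$, then on $]\sigma,\tau[$ I get $f'/f=\psi\geq\beta-C_1(\tau-\sigma)\geq1+\tfrac{1}{C_2}-C_1(\tau-\sigma)$; shrinking $\epsilon_0$ once more so that $C_1\epsilon_0\leq1/(2C_2)$ gives $f'/f-1\geq1/(2C_2)>0$ throughout. Together with $f>0$ this yields $0<f<f'$, i.e.\ $f\in\mathcal H^{S,T}_{\sigma,\tau}$.

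\textbf{Main difficulty.} Everything except the bootstrap is a transcription of Riccati computations already carried out above. The one genuine point is that the inequality $|\psi'|\leq C_1$ is available only where $\psi$ is already known to lie in $[1/2,2C_0]$; the connectedness argument of the second step is what propagates control from the single point $s_*$ (pinned down by the mean-value identity \eqref{mh.15}) to the whole interval, and it closes precisely because $\epsilon_0$ may be chosen small relative to $C_0$ and $C_1$, keeping $\psi$ within distance $C_1(\tau-\sigma)$ of its mean $\beta\in[2/3,3C_0/2]$.
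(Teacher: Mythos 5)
Your proof is correct, and it takes a genuinely different route from the paper's. The paper (the material of Subsubsection \ref{sss3.4.2} leading to \eqref{mh.16}--\eqref{mh.19}) treats \eqref{mh.11} as an initial value problem: it shows that the map from the initial datum $z=\psi(\sigma)$ to the mean $m_{]\sigma,\tau[}(\psi)$ is a biholomorphic perturbation of the identity, inverts it to select the $z$ whose Riccati solution realizes the prescribed mean $\ln(T/S)/(\tau-\sigma)$ from \eqref{mh.15}, and only then identifies the resulting $e^{\int\psi}$ (suitably normalized) with $f^{S,T}_{\sigma,\tau}$ via uniqueness of the Dirichlet problem \eqref{mh.2}. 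You instead work directly with the Dirichlet solution $f$: you first prove $f>0$ by observing that $|f|$ is also a minimizer in $H^1_{S,T}$ (so $|f|=f$ by the uniqueness in Proposition \ref{mh1}, and an interior zero would be a double zero of the linear ODE), and you then anchor the Lipschitz estimate for $\psi=f'/f$ at an interior point $s_*$ where $\psi$ equals its mean (mean value theorem for integrals), closing with a clopen/connectedness bootstrap on the component of $\{1/2<\psi<2C_0\}$ containing $s_*$. Your version is more elementary and self-contained — no inverse-map or holomorphic extension is needed, and the positivity of $f^{S,T}_{\sigma,\tau}$, which the paper obtains only implicitly by constructing $f$ from a positive Riccati solution, is made explicit. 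What the paper's heavier machinery buys is the bijective correspondence between initial data and mean values of Riccati solutions, which is reused elsewhere (notably in the proof of Proposition \ref{stm2}); your argument yields exactly the estimate of the present proposition and nothing more, which is all that is required here.
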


\subsection{Structure of minimizers}\label{stm}
We will first discuss minimizers over a fixed interval $]0,a[$\,,
$0<a<\infty $.
It may be useful to recall that if $u\in H^1(]0,a[)$, then $u$ is H\"older continuous of
order $1/2$, i.e.  $u\in C^{1/2}$. In fact, if $0\le \sigma <\tau \le a\, $,
$$
|u(\tau )-u(\sigma )|\le \int _\sigma ^\tau | u'(s) |\, ds\le \|
u\|_{H^1}(\tau -\sigma )^{1/2}.
$$
\begin{prop}\label{stm1}
Let $\phi \in H^1(]0,a[)$ be real-valued, $0\le \sigma <\tau \le
a$. Let
$$
\lambda =\frac{\phi (\tau )-\phi (\sigma )}{\tau -\sigma }.
$$
 Then there exist arbitrarily short intervals $[\widetilde{\sigma
 },\widetilde{\tau } ]\subset [\sigma ,\tau ]$ such that
 $$
\frac{\phi (\widetilde{\tau })-\phi (\widetilde{\sigma
    })}{\widetilde{\tau }-\widetilde{\sigma
  }}=\lambda \, .
 $$
\end{prop}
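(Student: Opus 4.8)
The statement is an elementary fact about continuous functions, and the plan is to reduce it to the intermediate value theorem applied to a suitably defined ``average slope'' function. Since $\phi \in H^1(]0,a[)$, it is in particular continuous on $[\sigma,\tau]$ (using the H\"older continuity just recalled, $\phi$ extends continuously to the closed interval). The quantity $\lambda$ is the average of the derivative $\phi'$ over $[\sigma,\tau]$, i.e. $\lambda = \frac{1}{\tau-\sigma}\int_\sigma^\tau \phi'(s)\,ds$, but I will not need the derivative at all; only continuity of $\phi$ matters.

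First I would fix a small parameter $h>0$ with $h\le \tau-\sigma$ and consider the continuous function
$$
g_h(x) = \frac{\phi(x+h)-\phi(x)}{h}, \qquad x\in[\sigma,\tau-h].
$$
This is continuous in $x$ since $\phi$ is continuous. The key observation is a ``discrete averaging'' identity: if we set $N=\lfloor (\tau-\sigma)/h\rfloor$ and look at the telescoping sum $\sum_{j=0}^{N-1} \big(\phi(\sigma+(j+1)h)-\phi(\sigma+jh)\big) = \phi(\sigma+Nh)-\phi(\sigma)$, then the average of $g_h$ over the points $\sigma, \sigma+h, \dots, \sigma+(N-1)h$ equals $\frac{\phi(\sigma+Nh)-\phi(\sigma)}{Nh}$. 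As $h\to 0$ this tends to $\lambda$ by continuity of $\phi$ at $\sigma$ and $\tau$ (since $\sigma+Nh\to\tau$). Hence for $h$ small, the average of the sampled values of $g_h$ is close to $\lambda$; therefore $g_h$ takes, at some sample point, a value $\ge$ (something close to $\lambda$), and at some other sample point a value $\le$ (something close to $\lambda$).

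Now I would apply the intermediate value theorem to the continuous function $g_h$ on $[\sigma,\tau-h]$: it attains a value $\ge \lambda - \epsilon(h)$ and a value $\le \lambda + \epsilon(h)$ with $\epsilon(h)\to 0$, so by continuity there is a point $x_h\in[\sigma,\tau-h]$ where $g_h(x_h)$ is within $\epsilon(h)$ of $\lambda$. To get \emph{exactly} $\lambda$ rather than approximately, I would instead argue as follows: define $G_h(x) = \phi(x+h)-\phi(x)-\lambda h$; by the telescoping/averaging argument there are sample points where $G_h\ge 0$ and where $G_h\le 0$ (their sum over a full partition is exactly $\phi(\tau')-\phi(\sigma)-\lambda(\tau'-\sigma)$ for $\tau'=\sigma+Nh$, which is $O(h)$ but, more usefully, if we choose $h=(\tau-\sigma)/N$ exactly then this sum is exactly $\phi(\tau)-\phi(\sigma)-\lambda(\tau-\sigma)=0$). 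With $h=(\tau-\sigma)/N$ the sum $\sum_{j=0}^{N-1} G_h(\sigma+jh)=0$ exactly, so either all terms vanish (then any $x_h=\sigma+jh$ works) or there are indices with $G_h>0$ and with $G_h<0$; by the intermediate value theorem applied to the continuous function $x\mapsto G_h(x)$ on the interval between two such consecutive-in-value sample points, there is $x_h$ with $G_h(x_h)=0$, i.e. $\frac{\phi(x_h+h)-\phi(x_h)}{h}=\lambda$. Taking $[\widetilde\sigma,\widetilde\tau]=[x_h,x_h+h]\subset[\sigma,\tau]$ gives an interval of length $h=(\tau-\sigma)/N$, which can be made arbitrarily short by choosing $N$ large.

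The only mildly delicate point — and the one I would be most careful about — is making the telescoping-sum argument rigorous with the \emph{exact} choice $h=(\tau-\sigma)/N$ so that the sum of the $G_h$ over the partition points is \emph{exactly} zero, guaranteeing a genuine sign change (or a vanishing term) rather than merely an approximate one; this is what lets us produce an interval on which the average slope is exactly $\lambda$ instead of approximately $\lambda$. Everything else (continuity of $\phi$ from the $H^1$ embedding, continuity of $x\mapsto G_h(x)$, the intermediate value theorem) is routine.
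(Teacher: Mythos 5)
Your proof is correct and follows essentially the same route as the paper's: both partition $[\sigma,\tau]$ into $N$ equal subintervals, note that the mean of the slopes $\frac{\phi(\sigma+(j+1)h)-\phi(\sigma+jh)}{h}$ is exactly $\lambda$, and then slide a length-$h$ window continuously between two subintervals whose slopes lie on either side of $\lambda$, invoking the intermediate value theorem. Your insistence on taking $h=(\tau-\sigma)/N$ exactly, so that the telescoping sum vanishes identically, is precisely the point that makes the paper's argument work as well.
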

\begin{proof}~\\
  If $I=[\widetilde{\sigma },\widetilde{\tau }]$ is a subinterval of
  $[\sigma ,\tau ]$, then 
  $m_I(\phi' ):=(\phi (\widetilde{\tau })-\phi (\widetilde{\sigma
  }))/(\widetilde{\tau }-\widetilde{\sigma })$ is equal to the average
 over $I$ of $\phi' $. Let $N\in \mathbb N$,
  $N\ge 2$, and decompose $[\sigma ,\tau ]$ into the disjoint union of
  $N$ intervals $I_1,...,I_N$ of length $(\tau -\sigma )/N$. Then the
  mean value of the averages $m_{I_j}(\phi' )$ is equal to
  $\lambda $. If no such average is equal to $\lambda $, there exist
  $I_j$, $I_k$, with $m_{I_j}(\phi' )<\lambda $,
  $m_{I_k}(\phi' )>\lambda $. Let $I^t=I_j+Ct$ with $C\in  \mathbb R $ chosen so that $I^0=I_j$, $I^1=I_k$ or at least so that we
  have equality for the interiors. Then $m_{I^t}(\phi' )$
  varies continuously with $t$, so there exists  $ t\in ]0,1[$ such that
  $m_{I^t}(\phi' )=\lambda $. 
\end{proof}

\par Let $u_0\in {\cal H}(]0,a[)={\cal H}_{0,a}^{0,1}$ be a
minimizer for $I_{]0,a[}$. Let $$\psi _0=u_0'/u_0=\phi _0' \mbox{  where } \phi _0=\log 
u_0\,.$$
Then, we deduce from \eqref{exi.3}:
\begin{equation}\label{stm.1}
\psi _0\ge 1\,,
\end{equation}
\begin{equation}\label{stm.2}
  {{\phi _0}_\vert}_{]\epsilon ,a[}\in H^1(]\epsilon ,a[)\,,
\end{equation}
for every $\epsilon >0\,$.
\begin{prop}\label{stm2}
Let $0<\sigma <\tau \le a$ and assume that $\lambda :=m_{]\sigma ,\tau [}(\psi
_0)>1$. Then there exists $ s_0\in ]\sigma ,\tau [$ and $\alpha ,\beta $
with
\begin{equation}\label{stm.3}
0\le \alpha <s_0<\beta \le a,
\end{equation}
such that
 \begin{equation}\label{stm.4}
u_0 \hbox{ is }m\hbox{-harmonic and } 0<u_0<u_0'\hbox{  on }]\alpha ,\beta [\,.
\end{equation}
\begin{itemize}
\item
 If $\beta <a$, we have $\psi _0(s)\to 1$, $s\nearrow \beta $.
 \item If $\alpha >0$, we have $\psi _0(s)\to 1$, $s\searrow \alpha  $.
\end{itemize}
We recall that $\psi _0(s)\to -\infty $, when $s\searrow 0$. Moreover 
$s_0$ can be chosen so that $\psi _0(s_0)$ is arbitrarily close to
$\lambda $.
\end{prop}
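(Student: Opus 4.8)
\textbf{Proof proposal for Proposition \ref{stm2}.}

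\medskip

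The plan is to produce the interval $]\alpha,\beta[$ by a continuation argument based on the Riccati description of $m$-harmonic functions in Subsection \ref{sss3.4.2}, together with the averaging device of Proposition \ref{stm1}. First I would invoke Proposition \ref{stm1} applied to $\phi=\phi_0$ on $]\sigma,\tau[$: since the average of $\psi_0=\phi_0'$ over $]\sigma,\tau[$ equals $\lambda>1$, there exist arbitrarily short subintervals $[\widetilde\sigma,\widetilde\tau]\subset[\sigma,\tau]$ on which the average of $\psi_0$ is exactly $\lambda$. Choosing such an interval short enough that $(\widetilde\tau-\widetilde\sigma)^{-1}\log(u_0(\widetilde\tau)/u_0(\widetilde\sigma))=\lambda$ falls in the range where Proposition \ref{mh5} applies (i.e. $\lambda\in[1+1/C_2,3C_0/2]$ for appropriate constants, which is fine since $\lambda>1$ is fixed), I get that the $m$-harmonic function $f=f^{S,T}_{\widetilde\sigma,\widetilde\tau}$ with boundary values $S=u_0(\widetilde\sigma)$, $T=u_0(\widetilde\tau)$ satisfies $f'/f-1\ge 1/(2C_2)>0$, hence $f\in{\cal H}^{S,T}_{\widetilde\sigma,\widetilde\tau}$. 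By Remark \ref{mh2}, $f$ is \emph{the} unique minimizer in $H^1_{S,T}(]\widetilde\sigma,\widetilde\tau[)$, and by Remark \ref{mh3} the restriction ${u_0}_{\vert]\widetilde\sigma,\widetilde\tau[}$ is a minimizer in ${\cal H}^{S,T}_{\widetilde\sigma,\widetilde\tau}\subset H^1_{S,T}(]\widetilde\sigma,\widetilde\tau[)$, so $u_0=f$ on $]\widetilde\sigma,\widetilde\tau[$: $u_0$ is $m$-harmonic there with $0<u_0<u_0'$. Pick $s_0$ in this interval with $\psi_0(s_0)$ as close to $\lambda$ as desired (using $\psi_0=\psi(\sigma)+{\cal O}(\widetilde\tau-\widetilde\sigma)$ from \eqref{mh.16}–\eqref{mh.17}).

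\medskip

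Next I would enlarge $]\widetilde\sigma,\widetilde\tau[$ to a maximal open subinterval $]\alpha,\beta[\subset]0,a[$ containing $s_0$ on which $u_0$ is $m$-harmonic and $0<u_0<u_0'$ (equivalently $\psi_0>1$). The point is to extend the local identity $u_0=f$ by the same Remark \ref{mh2}/\ref{mh3} argument on overlapping short intervals: on any short interval contained in the set $\{\psi_0>1\}$ where $u_0$ stays $m$-harmonic, uniqueness of the minimizer forces agreement with an $m$-harmonic function, and $m$-harmonic functions are real-analytic, so the continuation is unambiguous. One must check $\alpha\ge 0$ and $\beta\le a$ are the only possible breakdown points other than $\psi_0\searrow 1$: the maximality means that either $\beta=a$, or $\psi_0(s)\to 1$ as $s\nearrow\beta$ (since $\psi_0$ cannot jump below $1$ by \eqref{stm.1}, and if $\psi_0(\beta)>1$ with $u_0$ still $m$-harmonic near $\beta$ the interval could be extended, using Lemma \ref{mh4} and the control \eqref{mh.13}–\eqref{mh.14} to prevent $\psi_0$ from escaping upward). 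Symmetrically on the left end, either $\alpha=0$ (where we already know $\psi_0\to-\infty$, recalled at the end of the statement), or $\psi_0(s)\to 1$ as $s\searrow\alpha$.

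\medskip

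I would treat the boundary limits as follows. On $]\alpha,\beta[$, $\psi_0$ solves the Riccati equation \eqref{mh.11}, and $\psi_0>1$ there. If $\beta<a$, continuity of $\psi_0$ from inside (it is bounded: from \eqref{mh.14}, $\psi_0(s)\le\max(\psi_0(s_0),\max_{[s_0,s]}f_+)$ on $[s_0,\beta[$, so the limit $\psi_0(\beta^-)$ exists and is finite) gives a limit $\ell:=\lim_{s\nearrow\beta}\psi_0(s)\ge 1$. If $\ell>1$, then near $\beta$ we are still in the regime of Proposition \ref{mh5}, so $u_0$ is $m$-harmonic with $u_0<u_0'$ on a slightly larger interval $]\alpha,\beta+\delta[$ — contradicting maximality of $\beta$. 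Hence $\ell=1$, i.e. $\psi_0(s)\to1$ as $s\nearrow\beta$. The argument at $\alpha>0$ is identical with the orientation reversed (the Riccati structure is not time-symmetric but the same minimizer-uniqueness plus Proposition \ref{mh5} argument works on $]\alpha-\delta,\beta[$).

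\medskip

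The main obstacle I anticipate is the rigorous extension/maximality step: making precise that ``$u_0$ coincides with an $m$-harmonic function on a maximal open set'' and that this set is an \emph{interval} containing $s_0$, rather than a disconnected union of harmonic patches. The chaining of the local uniqueness statements (Remarks \ref{mh2}, \ref{mh3}) over overlapping short intervals needs $\psi_0>1$ to persist, which is exactly where Lemma \ref{mh4} and the a priori bound \eqref{mh.14} on $\psi_0$ enter to close the loop; controlling $\psi_0$ near the endpoints (ruling out $\psi_0\to\infty$, and showing the one-sided limit exists and equals $1$ rather than oscillating) is the delicate part, handled by the monotonicity information \eqref{mh.13} once $\psi_0\ge f_+$, together with $f_+\to 1$ as $\mu\to(-1)^-$.
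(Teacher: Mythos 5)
Your argument follows the paper's proof essentially step for step: Proposition \ref{stm1} combined with Proposition \ref{mh5} and Remarks \ref{mh2}--\ref{mh3} gives the local identification $u_0=f^{S,T}_{\widetilde\sigma,\widetilde\tau}$ on a short interval and the choice of $s_0$, the maximal interval $]\alpha,\beta[$ is defined in the same way, and the endpoint limits are obtained by the same contradiction, namely re-applying the local identification on a short interval straddling an endpoint at which $\psi_0$ would remain $>1$. The only cosmetic difference is your appeal to real-analyticity of $m$-harmonic functions in the continuation step, which is neither available (since $m$ is only assumed $C^1$) nor needed, as the chaining rests entirely on the uniqueness of minimizers on overlapping short intervals.
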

\begin{proof}
By Proposition \ref{stm1} there exist arbitrarily short intervals
$I=]\widetilde{\sigma },\widetilde{\tau }[\subset ]\sigma ,\tau [$
such that $m_I(\phi' _0)=\lambda $. For each such interval put
$S=u_0(\widetilde{\sigma })$, $T=u_0(\widetilde{\tau })$, so that $\log 
(T/S)=\lambda (\widetilde{\tau }-\widetilde{\sigma })$. Let
$f=f_{\widetilde{\sigma },\widetilde{\tau }}^{S,T}$, $\psi =f'/f$,
$\phi =\log  f$. Then we have (\ref{mh.11}), (\ref{mh.15}) and we can
apply (\ref{mh.17}) (or Proposition \ref{mh5}) with $\sigma ,\tau $ there replaced by
$\widetilde{\sigma },\widetilde{\tau }$, to see that $\psi (s)=\lambda
+{\cal O}(\widetilde{\tau }-\widetilde{\sigma })$, $\widetilde{\sigma
}\le s\le \widetilde{\tau }$. In particular $\psi >1$ on
$[\widetilde{\sigma },\widetilde{\tau }]$ when $\widetilde{\tau
}-\widetilde{\sigma }$ is small enough. Hence $f\in {\cal
  H}_{\widetilde{\sigma },\widetilde{\tau }}^{S,T}$ and applying
Remark \ref{mh3}, we conclude that
\begin{equation}\label{stm.5}
u_0=f_{\widetilde{\sigma },\widetilde{\tau }}^{u_0(\widetilde{\sigma
  }), u_0(\widetilde{\tau })}\hbox{ on }]\widetilde{\sigma }
  ,\widetilde{\tau }[.
\end{equation}
Choose $s_0\in ]\widetilde{\sigma },\widetilde{\tau }[$ so that $\psi
(s_0)=\psi _0(s_0)$ is as close to $\lambda $ as we like.

\par Let $]\alpha ,\beta [\subset ]0,a[$ be the largest open interval
containing $s_0$ on which $u_0$ is $m$-harmonic and $u_0'/u_0>1$.

\par Assume that $\alpha >0$ and that $\psi _0(\alpha +0)>1$. Then we
can find (new) arbitrarily short intervals $]\widetilde{\sigma
},\widetilde{\tau }[$, containing $\alpha $, such that
$$
m_{]\widetilde{\sigma },\widetilde{\tau }[}(\phi' _0)\ge
1+{ \frac{1}{|{\cal O}(1)|}}
$$
and as above, we see that $u_0$ is $m$-harmonic and $u_0'/u_0>1$ on
$]\widetilde{\sigma },\widetilde{\tau }[$ which contradicts the
maximality of $]\alpha ,\beta [$. Hence, if $\alpha >0$, we have $\psi
_0(\alpha +0)=1$.

\par Similarly, if $\beta <a$, we have $\psi _0(\beta -0)=1$.
\end{proof}

\par Let $J\subset ]0,a[$ be the countable disjoint union of all
open maximal intervals $I\subset ]0,a[$, such that $u_0$ is
$m$-harmonic with $u_0'/u_0>0$ on $I$.
\begin{prop}\label{stm4}
$\psi _0$ is uniformly Lipschitz continous on $]0,a[\,$, $>1$ on $J$, and $=1$ on
$]0,a[\setminus J$.
\end{prop}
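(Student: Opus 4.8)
The plan is to recover $\psi_0$ from the local structure of minimizers established in Proposition \ref{stm2}: over any interval on which the mean of $\psi_0$ exceeds $1$ there sit short arcs on which $u_0$ is genuinely $m$-harmonic with $\psi_0>1$, and $\psi_0\to1$ at those endpoints of such arcs that lie in the interior of $]0,a[$. Here $u_0>0$ on $]0,a[$, so $\psi_0=u_0'/u_0=\phi_0'$ is well defined, $\psi_0\ge1$ a.e., and $\phi_0\in H^1_{\mathrm{loc}}(]0,a[)$. First I would show that $\psi_0=1$ off $J$. Call $s_1\in{]0,a[}$ \emph{flat} if $\psi_0=1$ a.e.\ near $s_1$, and let $Z$ be the open set of flat points. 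If $s_1$ is not flat then $\psi_0\not\equiv1$ on every interval $]\sigma,\tau[\ni s_1$, so $m_{]\sigma,\tau[}(\psi_0)=(\phi_0(\tau)-\phi_0(\sigma))/(\tau-\sigma)>1$; applying Propositions \ref{stm1} and \ref{stm2} (or Proposition \ref{mh5}) inside $]\sigma,\tau[$ produces arbitrarily short intervals on which $u_0$ is $m$-harmonic with $\psi_0>1$, hence subintervals of $J$, so letting $]\sigma,\tau[$ shrink to $s_1$ gives $s_1\in\overline J$. Thus $]0,a[=Z\cup\overline J$ and $\psi_0=1$ on $Z$, and only $\partial J=\overline J\setminus J$ remains.

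On a component $]\alpha,\beta[$ of $J$ the function $u_0$ is $m$-harmonic and positive, so $u_0\in C^\infty$ there, $\psi_0\in C^\infty$ solves the Riccati equation (\ref{mh.11}), and by Proposition \ref{stm2}, $\psi_0\to1$ at any endpoint interior to $]0,a[$; together with (\ref{mh.13})--(\ref{mh.14}) and the local boundedness of $\mu=m'/m$, a backward integration of (\ref{mh.11}) from such an endpoint yields $1\le\psi_0\le1+\mathcal O(\ell)$ throughout a component of length $\ell$. Since components accumulating at a point of $\partial J$ must have lengths tending to $0$, and the two-sided limits of $\psi_0$ at an interior endpoint of any ``large'' component equal $1$, we get $\psi_0\to1$ at every point of $\partial J$; hence $\psi_0=1$ on all of $]0,a[\setminus J$. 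Continuity of $\psi_0$ on $]0,a[$ now follows: $\psi_0$ is $C^\infty$ on $J$ with $|\psi_0'|=|\psi_0^2+2\mu\psi_0+1|$ bounded on $J\cap[\epsilon,a]$ (using (\ref{mh.14})), it is $\equiv1$ on the complement, and the one-sided limits agree at $\partial J$; so $\psi_0$ is locally Lipschitz on $]0,a[$ (near $s=0$ it grows like $1/s$ when $0\in\overline J$, as forced by $u_0(0)=0$). Finally $\psi_0>1$ on $J$, since each component is a maximal interval on which $u_0$ is $m$-harmonic with $u_0'/u_0>1$ (equivalently $>0$, because $\psi_0\ge1$): an interior point with $\psi_0=1$ would be a minimum of the smooth solution of (\ref{mh.11}), forcing $\mu=-1$ and $\mu'\le0$ there, which we exclude.

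The hard part is the first step. Since a priori $\psi_0$ lies only in $L^2_{\mathrm{loc}}(]0,a[)$, ``$\psi_0>1$'' is meaningful only through averages, and Proposition \ref{stm2} turns a positive average excess over an interval into honest $m$-harmonic arcs without locating them — one must iterate it on shrinking neighbourhoods to place $J$ correctly. The genuinely technical estimate is the bound $1\le\psi_0\le1+\mathcal O(\ell)$ on short $m$-harmonic components, which is what forces $\psi_0\to1$ on the possibly Cantor-like accumulation set $\partial J$; once the dichotomy $J=\{\psi_0>1\}$ and $\psi_0\equiv1$ elsewhere is established, the regularity claim is a routine consequence of the Riccati equation.
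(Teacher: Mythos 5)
Your argument reaches the right conclusion but by a genuinely different route from the paper's. The paper's proof is a one-line Dini-derivative argument: at $t\in J$ the two one-sided derivates coincide with the classical Riccati derivative $\psi_0'=-(\psi_0^2+2\mu\psi_0+1)$, while at $t\notin J$ they are shown via (\ref{mh.11}) to be sandwiched in $[0,-2(1+\mu)]$ or $[-2(1+\mu),0]$ according to the sign of $1+\mu$ (this uses $\psi_0(t)=1$ there and $\psi_0\ge 1$ nearby), and bounded Dini derivates give the Lipschitz property. The dichotomy ``$=1$ off $J$, $>1$ on $J$'' is treated by the paper as essentially built into the definition of $J$ together with the boundary limits of Proposition \ref{stm2}. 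You instead make that dichotomy the main theorem: the flat-point/shrinking-interval argument via Propositions \ref{stm1}--\ref{stm2} to show $]0,a[\,=Z\cup\overline J$, the estimate $1\le\psi_0\le 1+\mathcal{O}(\ell)$ on short components (which does go through, using $\psi_0\to 1$ at the left endpoint of each interior component together with (\ref{mh.14}) and the local boundedness of $f_+$ to control the Riccati right-hand side), and the disjointness argument forcing accumulating components to shrink. This buys a self-contained proof that the continuous representative of $\psi_0$ really equals $1$ on $]0,a[\setminus J$, something the paper leaves implicit; the cost is length, and your final patching step is in substance the same integration of (\ref{mh.11}) that underlies (\ref{eq:3.41a})--(\ref{eq:3.42a}).

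Two caveats. First, your closing argument for ``$\psi_0>1$ on $J$'' is the one genuinely shaky step: the parenthetical ``(equivalently $>0$, because $\psi_0\ge 1$)'' conflates two non-equivalent definitions of the maximal intervals, and ruling out an interior touching point $\psi_0(t)=1$ by ``forcing $\mu=-1$ and $\mu'\le 0$ there, which we exclude'' invokes an exclusion that appears nowhere in the hypotheses (indeed, if $\mu\equiv -1$ on a subinterval, ODE uniqueness gives $\psi_0\equiv 1$ there). If, as the surrounding text of the paper indicates, $J$ is by definition the union of maximal intervals on which $u_0$ is $m$-harmonic \emph{with} $u_0'/u_0>1$, this step is unnecessary and should simply be dropped. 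Second, you correctly observe that $\psi_0\sim 1/s$ near $s=0$, so only local Lipschitz continuity on $]0,a[$ (equivalently, uniform Lipschitz continuity on $[\epsilon,a]$ for each $\epsilon>0$) can hold; the ``uniformly Lipschitz on $]0,a[$'' of the statement is an overstatement that the paper's own proof does not deliver either, so this is a defect of the statement rather than of your argument.
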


\begin{proof}
For $t\in ]0,a[$\,,  let $\partial_s^+\psi_0 (t)$ be the set of all limits $(\psi_0(t+\epsilon_j) - \psi_0(t))/\epsilon_j$ with $\epsilon_j \searrow 0$.
Similarly  let $\partial_s^-\psi_0 (t)$ be the set of all limits $(\psi_0(t+\epsilon_j) - \psi_0(t))/\epsilon_j$ with $\epsilon_j \nearrow 0$. We can also define 
$\partial_s^-\psi_0(a)$.\\
When $t\in \mathcal J$, we have 
$$
\partial_s^+\psi_0(t)=\partial_s^-\psi_0(t) =\{\psi_0'(t)\}\,.
$$
When $t\in ]0,a[ \setminus \mathcal J$, we  see using \eqref{mh.11} that
\begin{equation}\label{eq:3.41a}
\partial_s^+ \psi_0 (t) \subset \left \{
 \begin{array}{ll} 
~ \{ 0 \} & \mbox{ if } \mu \geq -1 \\
 ~[0 \,,\,-2 (1+\mu)] & \mbox{ if } \mu  < -1 
\end{array} 
\right. \,,
\end{equation}
\begin{equation}\label{eq:3.42a}
\partial_s^- \psi_0 (t) \subset \left \{
 \begin{array}{ll} 
   [-2 (1+\mu)\,,\, 0]  & \mbox{ if } \mu  \geq  -1 \\
 \{ 0 \} & \mbox{ if } \mu < -1 \\
\end{array} 
\right. \,.
\end{equation}
From this it follows that $\psi_0$ is Lipschitz.
 \end{proof}

We next discuss some consequences for the global
structure of minimizers. As before, let $u_0\in {\cal H}$ be a
minimizer for $I_{]0,a[}$ and recall that $u_0$ is $m$-harmonic
with $u_0'/u_0>1$ on a
countable union $J$ of maximal open subintervals of $]0,a[$. One of
these subintervals is of the form $]0,\widetilde{a}[$, for some
$\widetilde{a}\in ]0,a]$, which is uniquely determined while
$u_0\,_{\big | ]0,\widetilde a[}$ is unique up to a  positive constant factor.\\ 
We have
$\widetilde{a}=a$  if $\mu \le -1$ on $]0,a[$.  In
  fact, by (\ref{mh.11}),
  \[
    \begin{split}
      (\psi -1)'&=-2(1+\mu )-2(1+\mu )(\psi -1)-(\psi -1)^2\\
      &\ge -2(1+\mu )(\psi -1)-(\psi -1)^2,
    \end{split}
  \]
so we cannot reach the region $\psi -1=0$ in finite positive time from a point in
the region $\psi -1>0$.

\par When $\widetilde{a}<a$, if $\mu (s)\ge -1$ for
$\tilde a\le s\le a$ (and in particular if $\mu (s)\ge -1$ on $]0,a[$),
it follows from Lemma \ref{mh4} that $\psi _0(s)=1$ for $s\ge
\widetilde a$. Indeed, otherwise there would be a maximal open subinterval
$]b,c[\subset ]\widetilde a,a[$ on which $\psi _0$ is $m$-harmonic, in
contradiction with the fact that $\psi '_0\le 0$ there.

\par More generally, let $\widetilde{a} <a$ and assume that $\psi
_0\not\equiv 1$ on $]\widetilde a,a[$. Let $I=]\sigma ,\tau [$ be a maximal
open subinterval of $ ]\widetilde a,a[$ on which $u_0$ is $m$-harmonic
with $u_0'/u_0>1$. Then
$\psi _0>1$ on $]\sigma ,\tau [$ and converges to $1$ when $s\searrow
\sigma $. When $\tau <a$ we also have that $\psi _0\to 1$ when
$\sigma \nearrow \tau $. Lemma \ref{mh4} then tells us that there
exist points $s>\sigma $ arbitrarily close to $\sigma $ where $\mu
(s)<-1$. Similarly, if $\tau <a$ there are points $s<\tau $
arbitrarily close to $\tau $ with $\mu (s)>-1$.

\par We get the following conclusion, where we represent $J$
as a disjoint union of maximal subintervals $I$, where $u_0$ is
$m$-harmonic with $u_0'/u_0>1$:
\begin{itemize}
\item[] If $\mu \ge -1$ on $]0,a[$, then $J=I=]0,\widetilde{a}[$ for some
  $0<\widetilde{a}\le a$.
\item[] If $I=]0,\widetilde a[$, $\widetilde a<a$, then $I$ contains a point
  $s  $ arbitrarily close to $\tilde a$ where $\mu (s)> -1$.
  \item[] If $I=]\sigma ,\tau [$, $\widetilde a<\sigma<\tau <a $, then $I$
    contains two points $\tilde{\sigma }$, $\widetilde{\tau }$,
    arbitrarily close to $\sigma $ and $\tau $ respectively, such that
    $\mu (\widetilde{\sigma })<-1$, $\mu (\widetilde{\tau })>-1$.
    \item[]If $I=]\sigma ,a [$, $\tilde a<\sigma <a $, then $I$
    contains a point $\widetilde{\sigma }$, 
    arbitrarily close to $\sigma $, such that
    $\mu (\widetilde{\sigma })<-1$.
  \end{itemize}

  \par We spell out the conclusion  when $\mu \ge -1$:
  \begin{prop}\label{stm5}
 Let $u_0$ be a minimizer for
$I_{]0,a[}$ on ${\cal H}={\cal H}_{0,a}^{0,1}$ and let
$\widetilde{a}$ be the largest number in $]0,a]$ such that
$u_0$ is $m$-harmonic with $u_0'/u_0>1$ on
  $]0,\widetilde{a}[\,$. If $\widetilde{a}<a$ and $\mu (s)\ge -1$ on
  $[\widetilde{a},a]$, then
 $u_0(s)=e^{s-a}$ on $[\widetilde{a},a[$ and
  $u_0$ is uniquely determined.
\end{prop}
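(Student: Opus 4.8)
The plan is to combine the structural information already assembled in this subsection. By hypothesis $\widetilde a$ is the right endpoint of the maximal initial interval $]0,\widetilde a[$ on which $u_0$ is $m$-harmonic with $u_0'/u_0>1$, and $\widetilde a<a$. On $[\widetilde a,a]$ we have $\mu(s)\ge -1$, so by the discussion preceding Proposition~\ref{stm5} (using that $J$ decomposes into maximal $m$-harmonic intervals, each of which, if it lies strictly to the right of $\widetilde a$, must contain a point where $\mu<-1$ by Lemma~\ref{mh4} and Proposition~\ref{stm2}) there can be no further such subinterval inside $]\widetilde a,a[$. Hence $\psi_0\equiv 1$ on $[\widetilde a,a[$, i.e.\ $u_0'/u_0=1$ there.

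Next I would integrate this ODE with the boundary data. From $\psi_0=u_0'/u_0=1$ on $]\widetilde a,a[$ we get $\phi_0(s)=\log u_0(s)=s+c$ for a constant $c$, so $u_0(s)=e^{c}e^{s}$ on $]\widetilde a,a[$. Since $u_0\in{\cal H}={\cal H}_{0,a}^{0,1}$ satisfies $u_0(a)=1$ and $u_0$ is continuous up to $s=a$, taking the limit $s\nearrow a$ forces $e^{c}e^{a}=1$, i.e.\ $e^{c}=e^{-a}$, whence $u_0(s)=e^{s-a}$ on $[\widetilde a,a[$ (the value at $\widetilde a$ being recovered by continuity).

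Finally, for uniqueness: on $[\widetilde a,a[$ the function $u_0$ is completely pinned down by the two facts just used — $\psi_0\equiv 1$ and $u_0(a)=1$ — so any two minimizers agree there. On $]0,\widetilde a[$ the minimizer is $m$-harmonic with $u_0'/u_0>1$, and by the uniqueness statement quoted just before Proposition~\ref{stm5} (namely that the maximal initial $m$-harmonic interval $]0,\widetilde a[$ is uniquely determined and ${u_0}_{|]0,\widetilde a[}$ is unique up to a positive constant factor), the two minimizers agree up to a positive scalar on $]0,\widetilde a[$; but matching the now-determined value $u_0(\widetilde a)=e^{\widetilde a-a}$ at the common endpoint fixes that scalar. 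Hence $u_0$ is unique on all of $]0,a[$.

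The main obstacle is the very first step — ruling out any $m$-harmonic piece of $\psi_0$ strictly inside $]\widetilde a,a[$. This is where the sign condition $\mu\ge -1$ on $[\widetilde a,a]$ is essential: it is exactly the hypothesis that makes Lemma~\ref{mh4} give $\psi_0'\le 0$ wherever $\psi_0\ge 1$, so that $\psi_0$ cannot leave the level $1$ and re-enter the region $\psi_0>1$. Everything after that is routine integration and bookkeeping with the boundary conditions.
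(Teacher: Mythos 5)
Your proof is correct and follows essentially the same route as the paper: the paper gives no separate proof of Proposition~\ref{stm5} because it is stated as a summary of the immediately preceding discussion, which rules out any maximal $m$-harmonic subinterval of $]\widetilde a,a[$ exactly as you do (via Lemma~\ref{mh4}, the boundary behaviour $\psi_0\to 1$ from Proposition~\ref{stm2}, and the sign condition $\mu\ge -1$), then integrates $\psi_0\equiv 1$ with $u_0(a)=1$ and fixes the scalar on $]0,\widetilde a[$ by continuity at $\widetilde a$.
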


  \begin{remark}\label{stm6}~
  \begin{itemize}
  \item When $\widetilde{a}<a$, we shall see that $\widetilde{a}=a^{*} $
is independent of $a$. See Proposition~\ref{stm8}.
  \item 
The proposition can be applied in the case $m$ constant ($\mu=0$)  and more generally
  the case $m_\alpha (s) =\exp - \alpha s$ with  $\alpha\le 1$.
  \end{itemize}
\end{remark}

\par We end this subsection by studying global minimizers,
more precisely minimizers defined on all of $]0,+\infty [$. Let
$$
{\cal H}(]0,+\infty [):=\{u\in H^1_{\mathrm{loc}}([0,+\infty [);\,
0\le u\le u' ,\ u(0)=0, u>0\hbox{ on }]0,+\infty [\}.
$$
We say that $u_0\in {\cal H}(]0,+\infty [)$ is a minimizer (or a
global minimizer when emphasizing that we work on the whole half axis) if ${{u_0}_\vert}_{]0,a[}$ is a minimizer in ${\cal
  H}_{0,a}^{0,u_0(a)}$ for every $a>0$. Recall our assumption
  that $0<m\in C^1([0,+\infty [)$.
\begin{prop}\label{stm7}
A global minimizer $u_0\in {\cal H}(]0,+\infty [)$ exists.
\end{prop}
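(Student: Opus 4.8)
The plan is to construct the global minimizer by an exhaustion argument: take the minimizers $u_0^{(a)}$ on each finite interval $]0,a[$ provided by Proposition~\ref{exi1}, suitably normalized, and pass to a limit as $a\to+\infty$. The first step is normalization. Since $u_0^{(a)}$ is only determined up to the boundary condition $u_0^{(a)}(a)=1$, and the interesting part of the minimizer near $0$ is (by Proposition~\ref{stm5} and the surrounding discussion) essentially independent of $a$ up to a positive constant, I would instead normalize so that the minimizers agree on a fixed reference scale, e.g.\ fix $u_0^{(a)}(1)=1$ for all $a>1$ (this is legitimate since $u_0^{(a)}>0$ on $]0,a[$, and rescaling by a positive constant preserves the minimizing property on ${\cal H}_{0,a}^{0,\cdot}$ because $I$ is homogeneous of degree $2$). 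The key structural input is Remark~\ref{mh3}: the restriction of a minimizer on $]0,a[$ to a subinterval $]0,a'[$ is again a minimizer (in ${\cal H}_{0,a'}^{0,u_0^{(a)}(a')}$). This ``restriction stability'' is what will let the finite-interval minimizers fit together into a single global object.

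Next I would establish uniform local bounds so that a limit exists. On any fixed interval $]0,A[$ with $A<a$, the restriction of $u_0^{(a)}$ is a minimizer on $]0,A[$ for its own boundary data, and in ${\cal H}$ we have $0\le u\le u'$, hence $u$ is nondecreasing; combined with the normalization $u_0^{(a)}(1)=1$ this bounds $u_0^{(a)}$ from above on $]0,A[$ by $u_0^{(a)}(A)$, which I would control using the logarithmic-derivative analysis of Section~\ref{mh} (in particular $\psi_0=u_0'/u_0$ is uniformly Lipschitz and $\geq 1$ by Proposition~\ref{stm4}, so $\psi_0$ cannot blow up and $u_0^{(a)}(A)\le e^{A-1}$ up to the bound coming from $f_+$ when $\mu<-1$). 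Together with the minimizing property this gives a uniform $H^1(]\epsilon,A[)$ bound for every $\epsilon>0$. By a diagonal extraction over a sequence $a_n\to\infty$ and an exhaustion of $]0,+\infty[$ by intervals $]\epsilon_k,A_k[$, I get $u_0^{(a_n)}\rightharpoonup u_0$ weakly in $H^1_{\mathrm{loc}}(]0,+\infty[)$ and strongly in $C_{\mathrm{loc}}$, with $u_0(1)=1$, $u_0(0)=0$ (from the trace/Hölder bound near $0$), $u_0>0$, and $0\le u_0\le u_0'$ in the distributional sense (these inequalities pass to weak limits). Hence $u_0\in{\cal H}(]0,+\infty[)$.

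Finally I would check that $u_0$ is a global minimizer, i.e.\ that for each fixed $a>0$ the restriction ${u_0}_{\vert]0,a[}$ minimizes $I_{]0,a[}$ over ${\cal H}_{0,a}^{0,u_0(a)}$. Here I would use lower semicontinuity of $I_{]0,a[}$ under the weak-$H^1$/strong-$L^2$ convergence (exactly as in the proof of Proposition~\ref{exi1}: $I(u)=\|u\|_1^2-2\|u\|^2$ with $\|u\|_1$ weakly lower semicontinuous and $\|u\|^2$ continuous) to get $I_{]0,a[}(u_0)\le\liminf_n I_{]0,a[}(u_0^{(a_n)}|_{]0,a[})$. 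Then, since $u_0^{(a_n)}|_{]0,a[}$ is itself a minimizer for its boundary value $u_0^{(a_n)}(a)\to u_0(a)$, I would compare with an arbitrary competitor $w\in{\cal H}_{0,a}^{0,u_0(a)}$: rescaling $w$ by $u_0^{(a_n)}(a)/u_0(a)\to 1$ gives an admissible competitor for the $a_n$-problem on $]0,a[$, whence $I_{]0,a[}(u_0^{(a_n)}|_{]0,a[})\le (u_0^{(a_n)}(a)/u_0(a))^2 I_{]0,a[}(w)\to I_{]0,a[}(w)$. Combining, $I_{]0,a[}(u_0)\le I_{]0,a[}(w)$ for all such $w$, which is the claim. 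The main obstacle is the bookkeeping around the non-fixed endpoint value: one must be careful that the normalization is chosen so that the limit has a well-defined positive value at every point (not degenerating to $0$ as $a\to\infty$), and that the restriction-stability from Remark~\ref{mh3} is genuinely available in the form needed; the continuity of $I_{]0,a[}$ in the boundary data, used in the comparison step, is the one place where the homogeneity of $I$ and the strict positivity of $u_0$ on compact subsets of $]0,+\infty[$ are essential.
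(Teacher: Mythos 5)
Your argument is correct in outline but takes a genuinely different route from the paper. The paper's proof contains no compactness at all: it picks minimizers $u_1,\widetilde u_2,\dots$ on a sequence of intervals $]0,a_j[$, rescales $\widetilde u_{j+1}$ by a positive constant so that it agrees with $u_j$ at $a_j$, observes that both $u_j$ and $\widetilde u_{j+1}\vert_{]0,a_j[}$ are then minimizers in ${\cal H}_{0,a_j}^{0,u_j(a_j)}$ and hence have the same energy, and concludes that the glued function $1_{]0,a_j]}u_j+1_{]a_j,a_{j+1}[}\widetilde u_{j+1}$ is again a minimizer on $]0,a_{j+1}[$. This produces an exactly consistent sequence ($u_{j+1}\vert_{]0,a_j[}=u_j$), so the global minimizer is obtained by pure concatenation, with no limit, no lower semicontinuity, and no uniform bounds. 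Your approach (normalize at $s=1$, extract a weak-$H^1_{\mathrm{loc}}$ limit, then combine lower semicontinuity with continuity of the minimum value in the endpoint datum via the degree-$2$ homogeneity of $I$) also works, and your final comparison step with the rescaled competitor $\lambda_n w$, $\lambda_n=u_0^{(a_n)}(a)/u_0(a)\to 1$, is sound. What it buys is generality (it would survive in settings where the exact restriction/gluing identity is less clean), but at the price of the one step you yourself flag: you must prove that the normalized family neither blows up nor degenerates to $0$ on compact subsets of $]0,+\infty[$, i.e.\ uniform two-sided bounds on $u_0^{(a_n)}$ and hence on $\int\psi_0^{(n)}$. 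This is not automatic from $\psi_0\ge 1$ alone; it requires the structure theory of Subsections \ref{mh}--\ref{stm} (on the first maximal interval $]0,\widetilde a_n[$ the minimizer is the universal $m$-harmonic function with $u(0)=0$, so $\psi_0^{(n)}$ there is independent of $n$, and beyond it $\psi_0^{(n)}$ is controlled by (\ref{mh.14}) and Proposition \ref{stm4}). You should make that step explicit if you keep this route; the paper's gluing argument sidesteps it entirely.
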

\begin{proof}
Let $0<a_1<a_2<...$ be a sequence such that $a_j\to +\infty $ when
$j\to +\infty $. It suffices to find $u_0\in {\cal H}(]0,+\infty [)$
such that ${{u_0}_\vert}_{]0,a_j[}$ is a minimizer in ${\cal
  H}_{0,a_j}^{0,u_0(a_j)}$ for every $j$.

\par Let $u_1\in {\cal H}_{0,a_1}^{0,1}$ be a minimizer (and here we
could replace $1$ by any positive number). Let
$\widetilde{u}_2\in {\cal H}_{0,a_2}^{0,1}$ be a minimizer. Replacing
$\widetilde{u}_2$ by $\widetilde{u}_2(a_1)^{-1}\widetilde{u}_2$, we
get a new minimizer
$\widetilde{u}_2\in {\cal H}^{0,\widetilde{u}_2(a_2)}_{0,a_2}$ with
$\widetilde{u}_2(a_1)=u_1(a_1)\, (=1)$. Then both $u_1$ and
${{\widetilde{u_2}}}_{\vert]0,a_1[}$ are minimizers in ${\cal
  H}_{0,a_1}^{0,1}$, so
$$
u_2:=1_{]0,a_1]}u_1+1_{]a_1,a_2[}\widetilde{u}_2
$$
is also a minimizer in ${\cal H}_{0,a_2}^{0,u_2(a_2)}$ and has the
property: ${{u_2}_\vert}_{]0,a_1[}=u_1$.

\par Iterating this argument, we get a sequence of minimizers $u_j$ in
${\cal H}_{0,a_j}^{0,u(a_j)}$, $j=1,2,...$ such that
${{u_{j+1}}_\vert}_{]0,a_j[}=u_j$ for $j=1,2,...$ and it suffices to
define $u_0$ on $]0,+\infty [$ by ${{u_0}_\vert}_{]0,a_j[=u_j}$.
\end{proof}

\par The discussion of the structure of minimizers in ${\cal
  H}_{0,a}^{0,1}$ applies directly to global
minimizers. In particular, we get:
{ \begin{proposition}\label{stm8}
 If $u_0$ is a global minimizer, then $u_0$
is $m$-harmonic with $u_0'/u_0>1$ on a maximal interval of the form
$]0,a^{*} [$, for some $a^{*} \in ]0,+\infty ]$.  $a^{*} $ is uniquely determined and (the $m$-harmonic
function) ${{u_0}_\vert}_{]0,a^{*} [}$ is unique up to a constant
positive factor.

\par This characterization of $a^*$ is equivalent to the one in (\ref{defa*})
\end{proposition}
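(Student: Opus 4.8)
The plan is to transfer the structural results already established for minimizers on finite intervals $]0,a[$ to the global setting, and then identify the resulting breakpoint $a^{*}$ with the one defined spectrally in \eqref{defa*}. First I would recall from Proposition~\ref{stm7} that a global minimizer $u_0 \in {\cal H}(]0,+\infty[)$ exists, and that by definition its restriction to every $]0,a[$ is a minimizer in ${\cal H}_{0,a}^{0,u_0(a)}$. Hence all the local structure theory applies: by Propositions~\ref{stm2}, \ref{stm4} and the discussion preceding Proposition~\ref{stm5}, the set $J \subset ]0,+\infty[$ where $u_0$ is $m$-harmonic with $u_0'/u_0 > 1$ is a countable union of maximal open subintervals, one of which has the form $]0,\widetilde{a}[$ (since $\psi_0(s) \to -\infty$ as $s \searrow 0$, the point $0$ is always a left endpoint of such an interval), and ${u_0}_{|]0,\widetilde a[}$ is the unique-up-to-positive-scalar $m$-harmonic function there. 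The key point for globality is that $\widetilde a$ does \emph{not} depend on the (now irrelevant) right endpoint $a$: I would argue via the differential inequality for $\psi_0 - 1$ displayed after Proposition~\ref{stm5}, namely $(\psi-1)' \ge -2(1+\mu)(\psi-1) - (\psi-1)^2$, which shows that once $\psi_0$ leaves the region $\{\psi > 1\}$ it cannot re-enter it in finite positive time; combined with the analysis of where $\psi_0'$ can be positive (Lemma~\ref{mh4}), the first interval $]0,\widetilde a[$ is determined by the ODE \eqref{mh.11} with the "boundary layer" initial behaviour $\psi_0(0^+) = +\infty$ alone, independently of $a$. Set $a^{*} := \widetilde a \in ]0,+\infty]$.

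Next I would establish uniqueness of $a^{*}$ and of the $m$-harmonic piece. Uniqueness of $a^{*}$: if two global minimizers gave rise to intervals $]0,a_1^{*}[$ and $]0,a_2^{*}[$ with, say, $a_1^{*} < a_2^{*}$, then on $]0,a_1^{*}[$ both restrictions are $m$-harmonic with $\psi_0 > 1$, hence proportional, so they have the same logarithmic derivative $\psi_0$; since $\psi_0$ then solves the same Riccati equation \eqref{mh.11} past $a_1^{*}$, continuity of $\psi_0$ (Proposition~\ref{stm4}) forces the two to agree on the overlap and, by the no-return property above, $a_1^{*} = a_2^{*}$, a contradiction. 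The $m$-harmonic function on $]0,a^{*}[$ is then unique up to a positive constant because the second-order linear equation $P_m u = 0$ with the normalising condition $u(0)=0$ has a one-dimensional solution space (and the positivity $u > 0$, $u < u'$ on $]0,a^{*}[$ follows from the minimizer property).

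Finally, I would check that this $a^{*}$ coincides with the $a^{*}$ of \eqref{defa*}, defined via positivity of the lowest eigenvalue $\lambda^{DR}(a,m)$ of the Dirichlet--Robin realisation $K^{DR}_{m,a}$ of $K_m = m^{-2}P_m$ on $]0,a[$. The link is the identity $I_{]0,a[}(u) = (K^{DR}_{m,a} u \mid u)_{L^2(m^2 ds)}$ for $u$ in the relevant form domain with $u(0)=0$: when $\lambda^{DR}(a,m) > 0$ the quadratic form is coercive, the Dirichlet--Robin $m$-harmonic extension $f^{0,1}_{0,a}$ lies in ${\cal H}_{0,a}^{0,1}$ (by Proposition~\ref{mh5}-type estimates near $s=0$ and the sign analysis of $\psi_0$), and it is the unique minimizer (Remark~\ref{mh2}); this is exactly the regime $a \le a^{*}$ in the ODE description, because $u_0'(a) = u_0(a)$ is the Robin condition $\psi_0(a) = 1$, i.e. $a$ is where the $m$-harmonic branch meets $\psi_0 = 1$. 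Conversely $\lambda^{DR}(a,m) \le 0$ for $a > a^{*}$. Matching the two monotonicity thresholds gives equality of the two definitions of $a^{*}$.

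The main obstacle I anticipate is the independence of $\widetilde a$ from $a$ — i.e.\ showing that the first $m$-harmonic interval is genuinely intrinsic and not an artifact of the right boundary condition $u_0(a)=1$. Everything hinges on the no-return property for the Riccati flow near $\psi = 1$ and on pinning down the correct "initial condition at $0$"; once that is in place, uniqueness and the identification with \eqref{defa*} are comparatively routine bookkeeping with the results already proved in Section~\ref{s3}.
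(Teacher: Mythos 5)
Your overall route is the one the paper intends: Proposition \ref{stm8} is presented there as a direct consequence of the finite-interval structure theory (Propositions \ref{stm2}, \ref{stm4} and the discussion around Proposition \ref{stm5}), and your core uniqueness argument --- on the first maximal interval the minimizer is $m$-harmonic with $u_0(0)=0$, hence determined up to a positive factor by the one-dimensionality of that solution space, so $\psi_0=u_0'/u_0$ is the canonical Riccati solution blowing up at $0$ and $a^*$ is its first hitting time of $1$ (Proposition \ref{stm2} giving $\psi_0\to 1$ at a finite right endpoint) --- is exactly what settles both the independence from the right endpoint $a$ and the uniqueness claims. The identification with \eqref{defa*} via the Robin condition $\psi_0(a)=1$ is also the paper's mechanism (cf.\ the opening of Subsection \ref{ss3.6}).

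Two assertions you make along the way are wrong, though neither is load-bearing. First, the ``no-return'' property is false in general: the displayed inequality $(\psi-1)'\ge -2(1+\mu)(\psi-1)-(\psi-1)^2$ only shows that the region $\{\psi>1\}$ cannot be \emph{exited} in finite time when $\mu\le -1$; the paper's subsequent discussion explicitly allows further maximal intervals $]\sigma,\tau[$ with $\sigma>\widetilde a$ on which $\psi_0>1$ again (this requires $\mu<-1$ near $\sigma$), so $\psi_0$ may well re-enter $\{\psi>1\}$. You do not need this: the first maximal interval is intrinsic to the global minimizer, and your Riccati argument pins down its right endpoint. Second, $I_{]0,a[}(u)$ is not equal to $(K^{DR}_{m,a}u\,|\,u)_{L^2(m^2ds)}$; integration by parts leaves the boundary term $-m^2(a)u'(a)u(a)$, so for $u$ in the DR domain one gets $I_{]0,a[}(u)-m^2(a)u(a)^2$. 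The correct link to \eqref{defa*} is simply that the canonical $m$-harmonic solution with $u(0)=0$ becomes a zero-eigenvalue DR eigenfunction precisely when it first satisfies $u'(a)=u(a)$, i.e.\ when $\psi_0(a)=1$. (Also, $\psi_0(s)\to+\infty$, not $-\infty$, as $s\searrow 0$; the statement in Proposition \ref{stm2} is a typo, and your later use of $\psi_0(0^+)=+\infty$ is the correct one.)
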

\begin{remark}\label{stm9}
Note that we do not claim that we 
have uniqueness for $u_0$ up to multiplication with positive
constants.  However, we do get this uniqueness if we add the assumption
that $\mu (s)\ge -1$ for $s\ge a^{*} $. Cf.\ Proposition \ref{stm5}.
\end{remark}

\par From the discussion with Riccati equations, we have also 
\begin{proposition}
\ If \eqref{eq:C1weak} holds, then $a^{*} $. 
\end{proposition}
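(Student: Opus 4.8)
The claim is that $a^{*}<+\infty$. The plan is to argue by contradiction and to reduce the statement to an elementary comparison for the Riccati equation \eqref{mh.11}, which turns out to be incompatible with \eqref{eq:C1weak}. First I would invoke Proposition \ref{stm7} to fix a global minimizer $u_0\in{\cal H}(]0,+\infty[)$, and then Proposition \ref{stm8}, according to which $u_0$ is $m$-harmonic with $\psi_0:=u_0'/u_0>1$ precisely on a maximal interval $]0,a^{*}[$. Assume, for contradiction, that $a^{*}=+\infty$. Then $u_0$ is $m$-harmonic with $0<u_0<u_0'$ on all of $]0,+\infty[$, so by the discussion in Subsection \ref{sss3.4.2} (equations \eqref{mh.9}--\eqref{mh.11}) the function $\psi_0$ is $C^1$ on $]0,+\infty[$, solves
\[
 \psi_0'=-(\psi_0^2+2\mu\psi_0+1)\quad\text{on }]0,+\infty[\,,
\]
and satisfies $\psi_0(s)>1$ for all $s>0$.

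Next I would bring in the hypothesis \eqref{eq:C1weak}: choose $\delta\in]0,1[$ and $s_0>0$ such that $\mu(s)>-1+\delta$ for all $s\ge s_0$. On $[s_0,+\infty[$, using $\psi_0>1>0$, the right-hand side of the Riccati equation is bounded from below by
\[
 \psi_0^2+2\mu\psi_0+1>\psi_0^2-2\psi_0+1+2\delta\psi_0=(\psi_0-1)^2+2\delta\psi_0>2\delta\,,
\]
so that $\psi_0'(s)<-2\delta$ for every $s\ge s_0$. Integrating from $s_0$ gives $\psi_0(s)<\psi_0(s_0)-2\delta(s-s_0)$ for $s>s_0$, with $\psi_0(s_0)$ a finite real number; hence $\psi_0(s)\to-\infty$ as $s\to+\infty$, contradicting $\psi_0>1$ on $]0,+\infty[$. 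Therefore $a^{*}<+\infty$, as claimed.

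I do not expect a genuine obstacle here: once the reduction is in place the contradiction is a one-line monotonicity argument. The two points deserving a little care are (i) the use of Proposition \ref{stm8} to guarantee the \emph{strict} inequality $\psi_0>1$ on the whole half-line when $a^{*}=+\infty$ — it is exactly this strictness that makes the lower bound $(\psi_0-1)^2+2\delta\psi_0>2\delta$ available — and (ii) the fact that $\psi_0$ is classically $C^1$ and satisfies \eqref{mh.11} on $]0,+\infty[$, which follows from $m$-harmonicity together with $m\in C^1([0,+\infty[)$ by the elementary bootstrap already used in Subsection \ref{sss3.4.2}. One could instead avoid the global minimizer and work from the eigenvalue characterization \eqref{defa*}, showing $\lambda^{DR}(a,m)\le 0$ for large $a$ via the test function $s\mapsto\sinh s$ (for which $u'^2-u^2\equiv 1$); but that route requires an additional control of $\int_0^a m^2\,ds$ and becomes awkward when $\mu$ is unbounded from above, so the Riccati argument above is the cleaner option.
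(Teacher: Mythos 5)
Your proof is correct and rests on exactly the same key estimate as the paper's: under \eqref{eq:C1weak} the Riccati equation \eqref{mh.11} forces $\psi_0' \le -2\delta<0$ wherever $\psi_0>1$ and $\mu\ge -1+\delta$. The only difference is presentational — the paper runs the argument directly (using the Lipschitz bound \eqref{eq:3.41a}--\eqref{eq:3.42a} to bound $\psi_0(A)$ and thereby obtain an explicit upper bound on $a^{*}$), whereas you phrase it as a contradiction starting from $a^{*}=+\infty$, which sidesteps the need for that a priori bound.
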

\begin{proof} 
Let $A>0$ be such that $\mu(s) \geq -1 + \frac{1}{A}$ for $s \geq A$. It follows from \eqref{eq:3.41a}-\eqref{eq:3.42a} that there exists $B \geq A$ such that $\psi_0 (A) \leq B$.\\
Then we get from \eqref{mh.11} 
$$
\left \{\begin{array}{l} s \geq A \\ \psi(s) >1\end{array} \right. \mbox{ implies } \psi'_0 \leq - 2/A\,,
$$
so 
$\psi_0(s)=1$ for $B - \frac 2A (s-A) \leq 1$, i.e. for $s \geq \frac A2 (B+1)$.

\end{proof}
}

 \subsection{Application to our minimization problem}\label{ss3.6}
Let $u_0:[0,+\infty [\to \mathbb R$, satisfy $P_mu_0=0$, $u_0(0)=0$,
$u_0'(0)>0$, so that $u_0$ is uniqueley determined up to a constant
positive factor. Then $u_0>0$ on $]0,a^*(m)[$ and when $a^*(m)<+\infty
$, we have $u_0'(a^*(m))=u_0(a^*(m))$ and $u_0$ is then the first
eigenfunction of $K_{m,a^*}^{DR}$ with eigenvalue $0$.
 \begin{prop}\label{prop3.18}  For  $a\in ]0,+\infty [\cap ]0, a^{*} (m)]$, 
   \begin{equation} \label{energyabetter}
    I_{\mathrm{inf}}(a)
  = \psi_0(a) m^2(a)\,,
 \end{equation} 
  where $\psi_0= u'_0/u_0$, $u_0$.\\
  In particular, when $a=a^{*} (m)<+\infty $, we get
  \begin{equation} \label{energya**} 
I_{\mathrm{inf}} (a^{*} (m))  = m^2(a^{*} (m))\,.
 \end{equation}
 \end{prop}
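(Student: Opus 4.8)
The plan is to reduce the problem to the explicit $m$-harmonic minimizer and then to integrate by parts. By Propositions~\ref{stm7} and~\ref{stm8}, a global minimizer $u_0$ exists and, on $]0,a^{*}(m)[$, is $m$-harmonic with $u_0'/u_0>1$; moreover, by the very definition of a global minimizer, ${u_0}_{|]0,a[}$ is a minimizer in ${\cal H}^{0,u_0(a)}_{0,a}$ for every $a>0$. This $u_0$ coincides, up to a positive constant, with the $m$-harmonic function of the statement, since $P_mu_0=0$ together with $u_0(0)=0$ has a one-dimensional solution space and $u_0'(0)>0$ (otherwise $u_0\equiv 0$). Because $I_{]0,a[}(cu)=c^2I_{]0,a[}(u)$ and $u\mapsto cu$ maps ${\cal H}^{0,T}_{0,a}$ bijectively onto ${\cal H}^{0,cT}_{0,a}$ for $c>0$, the normalized function $\widetilde u:=u_0/u_0(a)$ is a minimizer in ${\cal H}^{0,1}_{0,a}$, whence
\[
I_{\mathrm{inf}}(a)=I_{]0,a[}(\widetilde u)=u_0(a)^{-2}\,I_{]0,a[}(u_0),
\]
so it remains to evaluate $I_{]0,a[}(u_0)$.

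First I would observe that $u_0'/u_0>1$ on $]0,a[$ (as $]0,a[\subset]0,a^{*}(m)[$, with equality at most at the endpoint when $a=a^{*}(m)$, a null set), so that $(u_0'^2-u_0^2)_+=u_0'^2-u_0^2$ there and the positive part drops out. On $]\epsilon,a[$ with $\epsilon>0$, integrating by parts and using $\partial_s(m^2u_0')=-m^2u_0$ (that is, $P_mu_0=0$) yields
\[
\int_\epsilon^a u_0'^2\,m^2\,ds=\bigl[m^2u_0'u_0\bigr]_\epsilon^a+\int_\epsilon^a u_0^2\,m^2\,ds .
\]
Since $u_0$ is continuous with $u_0(0)=0$, while $m^2u_0'$ has a finite limit at $0$ (integrate $\partial_s(m^2u_0')=-m^2u_0$ and use that $u_0$ is bounded near $0$, and $m(0)>0$), the boundary contribution at $\epsilon$ tends to $0$ as $\epsilon\searrow 0$, and $u_0'^2-u_0^2$ extends continuously to $[0,a]$. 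Letting $\epsilon\to 0$ we obtain $I_{]0,a[}(u_0)=\int_0^a(u_0'^2-u_0^2)m^2\,ds=m^2(a)u_0'(a)u_0(a)$, and therefore
\[
I_{\mathrm{inf}}(a)=u_0(a)^{-2}\,m^2(a)\,u_0'(a)\,u_0(a)=m^2(a)\,\psi_0(a),
\]
which is \eqref{energyabetter}. Taking $a=a^{*}(m)<+\infty$ and invoking the Dirichlet--Robin relation $u_0'(a^{*}(m))=u_0(a^{*}(m))$, so that $\psi_0(a^{*}(m))=1$, gives \eqref{energya**}.

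The two points that need a little care are the limiting argument for the boundary term at $s=0$ and the identification of $\widetilde u$ as a genuine minimizer of the restricted problem; both are covered by results already in place (the vanishing $u_0(0)=0$ together with $P_mu_0=0$ for the former, Propositions~\ref{stm7}--\ref{stm8} and the homogeneity of $I$ for the latter), so I do not anticipate a real obstacle.
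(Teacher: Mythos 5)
Your proof is correct and follows essentially the same route as the paper: identify the minimizer of the reduced problem with the normalized $m$-harmonic function $u_0/u_0(a)$ and evaluate $I_{]0,a[}$ by integration by parts, using $P_mu_0=0$ and $\psi_0(a^{*})=1$. The extra care you take with the boundary term at $s=0$ and with the identification of the restricted minimizer is sound but not a different method.
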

 \begin{proof}
   We have seen in Proposition \ref{red1} that
   $$\inf_{\{u\in H^1(]0,a[), u(0)=0, u(a)=1\}} \int _0^a (u'^2-u^2)_+
   \,m^2\, ds = \inf_{u \in \mathcal H }\, \int _0^a (u'^2-u^2) \,
   m^2\, ds\,. $$ Here the minimizer is  $u=u_0(s)/u_0(a)$. Integration by parts and using
   that $u$ is $m$-harmonic, gives
$$
 \int
_0^a (u'^2-u^2) \, m^2\, ds = m^2(a)  u(a) u'(a) = m^2(a) \psi_0 (a)\,.
 $$
 We also recall that $\psi_0 (a^{*} ) =1$.
 \end{proof}
 \begin{remark}
 When $m=1$, we obtain $a^{*}  =\frac \pi 4$ and $\psi_0 (s)= \cot s$. More generally, we can consider $m_\alpha (s)=e^{- \alpha s} $ with $|\alpha|\leq 1$. Writing $\alpha =\cos \theta$ ($\theta \in [0, +\pi]$)
 we get 
  \begin{itemize}
  \item  for $\alpha =\cos \theta$ with  $\theta \in ]0,\pi [$
  $$ 
   a^{*} (m_\alpha) = \frac{\pi -\theta}{2 \sin \theta}  \,,
   $$
   \item for $\alpha=\pm 1$, 
   $$ 
   a^{*} (m_{-1}) =   \frac 12 \mbox{ and }  a^{*} (m_{ 1} ) =  +\infty  \,.
   $$
  The  global minimizer  restricted to $]0, a^{*}  [$ is given by 
  \begin{equation}
  u_\alpha  (s) = \frac{1}{\sqrt{1-\alpha^2}} \,   \exp( \alpha s) \,
  \sin (\sqrt{1-\alpha^2}\, s),\ -1<\alpha <1
  \end{equation}
  and
  \begin{equation}
  u_{\pm 1} (s) = s \exp \pm s \,.
  \end{equation}
 \end{itemize}
When  $\alpha =\cos \theta$, we get the energy
$$\psi_\alpha (a) = \frac{\sin (\sin \theta a + \theta)}{\sin (\sin \theta a)}.$$
 \end{remark}

 \paragraph{Another upper bound}
 We start from the upper bound 
   $$\int_0^a  (u'^2 -u^2)_+ m(s)^2 ds \leq \int_0^a  u'^2(s)  m(s)^2 \, ds   $$ 
   and minimize the right hand side. \\
   Observing that
   \[
   \begin{split}
   1&=u(a)  = \int_0^a u' (s) \, ds 
   = \int_0^a u' (s)m(s)\, m(s)^{-1}  \, ds\\ &  \leq   \left(\int_0^a (u' (s) m(s)^2 ds\right)^{\frac 12} \,\, \left(\int_0^a \frac{1}{m(s)^2} ds\right)^{\frac 12}\,,
   \end{split}
   \]
   we look for a $u$ for which we have equality.\\
   By the standard Cauchy-Schwarz criterion, this is the case if,  for some constant $C>0$,   $$u' (s) m(s) =\frac{C}{m(s) }\,.$$
   Hence, we choose 
   $$
   u (s) =C   \int_0^s \frac{1}{m(\tau)^2} \, d\tau\,,
   $$
   where the choice of $C$ is determined by imposing $u(a)=1$.
  We obtain
  \begin{proposition}\label{prop3.6}
  For any $a >0$, 
   \begin{equation}
   \inf_{\{u\in H^1(]0,a[), u(0)=0, u(a)=1\}} \int
_0^a (u'^2-u^2)_+m^2\, ds \leq   \left(\int_0^a \frac{1}{m(s)^2}
  ds\right)^{-1}\,.
   \end{equation}
   \end{proposition}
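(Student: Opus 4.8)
The plan is to throw away, at the level of the integrand, both the positive part and the $-u^2$ term, and then to produce an explicit admissible competitor for which Cauchy--Schwarz is sharp. Since the statement only asks for an \emph{upper} bound on the infimum, it suffices to exhibit one function $u$ in the admissible class; no existence or regularity machinery is needed, and we need not even know that $\int_0^a u'^2 m^2\,ds$ is actually minimized by our choice.

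First I would note the pointwise inequality $(u'(s)^2-u(s)^2)_+\le u'(s)^2$, valid for every $u\in H^1(]0,a[)$, which gives
\[
I_{\mathrm{inf}}(a)\le \int_0^a (u'^2-u^2)_+\,m^2\,ds\le \int_0^a u'^2\,m^2\,ds
\]
for any admissible $u$ with $u(0)=0$, $u(a)=1$. It then remains to make the right-hand side small. Applying Cauchy--Schwarz in $L^2(]0,a[)$ to the splitting $1=u(a)=\int_0^a u'\,ds=\int_0^a (u'm)\cdot m^{-1}\,ds$ yields $1\le \bigl(\int_0^a u'^2 m^2\,ds\bigr)^{1/2}\bigl(\int_0^a m^{-2}\,ds\bigr)^{1/2}$, with equality exactly when $u'(s)m(s)$ is proportional to $m(s)^{-1}$, i.e. when $u'(s)=C/m(s)^2$.

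Next I would verify that the resulting candidate is legitimate and compute with it. Since $0<m\in C^1([0,a])$, the map $s\mapsto m(s)^{-2}$ is continuous, hence bounded on $[0,a]$, so $u(s):=C\int_0^s m(\tau)^{-2}\,d\tau$ is of class $C^1$, lies in $H^1(]0,a[)$, satisfies $u(0)=0$, and the normalization $u(a)=1$ forces $C=\bigl(\int_0^a m^{-2}\,ds\bigr)^{-1}$. For this $u$,
\[
\int_0^a u'^2 m^2\,ds=C^2\int_0^a \frac{m^2}{m^4}\,ds=C^2\int_0^a \frac{ds}{m^2}=C=\Bigl(\int_0^a \frac{ds}{m^2}\Bigr)^{-1},
\]
and combining this with the chain of inequalities of the previous paragraph gives the claimed bound.

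I do not expect any real obstacle here: the argument is elementary, and the only points requiring a word of care are the pointwise estimate $(u'^2-u^2)_+\le u'^2$ (which simply discards the structure the positive part was built to record), the admissibility of the explicit competitor, and the trivial identity $\int_0^a m^2/m^4\,ds=\int_0^a m^{-2}\,ds$. One could alternatively run a variational argument in the spirit of Proposition~\ref{mh1}, observing that our $u$ solves $\partial_s(m^2\partial_s u)=0$ with the given boundary data and is thus the genuine minimizer of $\int_0^a u'^2 m^2\,ds$, but the Cauchy--Schwarz route is shorter and fully self-contained.
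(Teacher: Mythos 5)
Your proposal is correct and follows essentially the same route as the paper: the pointwise bound $(u'^2-u^2)_+\le u'^2$, Cauchy--Schwarz applied to $1=\int_0^a (u'm)\,m^{-1}\,ds$, and the explicit competitor $u(s)=C\int_0^s m(\tau)^{-2}\,d\tau$ realizing equality. The only difference is that you carry out the final computation of $\int_0^a u'^2m^2\,ds$ explicitly, which the paper leaves implicit.
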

Note here that we have no condition on $a>0\,$ and no condition on $\mu$.

\paragraph{Minimization of $ \exp (2a) I_{\mathrm{inf}}(a)$ } ~\\
In the application to the semi-group upper bound we will meet the natural question of minimizing over $]0, a^{*} ] $
 the  quantity 
 \begin{equation}
a \mapsto  \Theta(a) := \exp (2 a) \,  m^2(a)  \, \psi_0 (a)\,.
 \end{equation}
 The answer is given by the following proposition:
\begin{prop}\label{lem3.17} When $a^{*}(m) <+\infty $, we have
\begin{equation}
 \inf_{a \in ]0,a^{*} ]}  \Theta(a) =\Theta( a^{*}  )=  \exp (2 a^{*}  ) \,  m^2(a^{*} ) \,.
 \end{equation}
 \end{prop}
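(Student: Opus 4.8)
I want to show that the function $\Theta(a)=\exp(2a)\,m^2(a)\,\psi_0(a)$ is non‑increasing on $]0,a^*]$, so that its infimum is attained at the right endpoint $a=a^*$, where $\psi_0(a^*)=1$ and hence $\Theta(a^*)=\exp(2a^*)\,m^2(a^*)$. By Proposition \ref{prop3.18} we know $\Theta(a)=\exp(2a)\,I_{\mathrm{inf}}(a)$, so monotonicity of $\Theta$ is exactly the statement that $a\mapsto \exp(2a)I_{\mathrm{inf}}(a)$ decreases on $]0,a^*]$.

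\textbf{Step 1: reduce to a differential inequality for $\psi_0$.} On $]0,a^*[$ the first eigenfunction $u_0$ of the $DR$‑problem is $m$‑harmonic with $u_0'/u_0=\psi_0>1$, and from \eqref{mh.11} we have $\psi_0'=-(\psi_0^2+2\mu\psi_0+1)$ with $\mu=m'/m$. Writing $g(a)=\log\Theta(a)=2a+2\log m(a)+\log\psi_0(a)$, differentiate:
\[
g'(a)=2+2\mu(a)+\frac{\psi_0'(a)}{\psi_0(a)}=2+2\mu-\frac{\psi_0^2+2\mu\psi_0+1}{\psi_0}=2+2\mu-\psi_0-2\mu-\frac1{\psi_0}=2-\psi_0-\frac1{\psi_0}.
\]
The key cancellation is that the $\mu$ terms drop out. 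Now $\psi_0+1/\psi_0\ge 2$ for all $\psi_0>0$, with equality iff $\psi_0=1$; since $\psi_0>1$ on $]0,a^*[$ we get $g'(a)=2-(\psi_0+1/\psi_0)<0$ there. Hence $\Theta$ is strictly decreasing on $]0,a^*[$.

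\textbf{Step 2: handle the endpoints and conclude.} Since $\psi_0$ is continuous on $]0,a^*]$ (Proposition \ref{stm4}) with $\psi_0(a^*)=1$, $\Theta$ extends continuously to $a=a^*$, and by Step 1 the infimum over $]0,a^*]$ is attained at $a=a^*$; there $m(a^*)^2\psi_0(a^*)=m(a^*)^2$ by \eqref{energya**}, giving $\Theta(a^*)=\exp(2a^*)m^2(a^*)$. (Near $a=0^+$ we have $\psi_0(a)\to+\infty$, so $\Theta(a)\to+\infty$ and there is certainly no competing minimizer at the left end; the infimum is a genuine minimum at $a^*$.) This proves the proposition.

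\textbf{Expected main obstacle.} The computation itself is short; the only delicate points are (i) justifying that $\psi_0$ is differentiable — or at least that $g$ is absolutely continuous with the stated a.e. derivative — on the set where we differentiate, which is covered by the structure results of Subsection \ref{stm} (on each maximal $m$‑harmonic interval $\psi_0$ is smooth and satisfies \eqref{mh.11}, and on the complement $\psi_0\equiv 1$, where $\Theta$ grows like $\exp(2a)m^2(a)$ and $g'=2+2\mu$, which need not be negative — but such a point cannot lie in $]0,a^*[$ by definition of $a^*$); and (ii) noting that the argument only needs $a^*<\infty$ so that the endpoint value $\Theta(a^*)$ makes sense, which is the hypothesis. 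So the real content is the one‑line identity $g'=2-\psi_0-1/\psi_0$ together with $\psi_0>1$ on $]0,a^*[$.
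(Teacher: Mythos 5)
Your proof is correct and is essentially the paper's own argument: the paper computes $\Theta'(a)=-\exp(2a)\,m^2(a)(\psi_0(a)-1)^2<0$ on $]0,a^*[$ via the Riccati equation \eqref{mh.11}, which is exactly your identity $g'=2-\psi_0-1/\psi_0=-(\psi_0-1)^2/\psi_0$ written multiplicatively. The extra remarks on endpoint behaviour and regularity are fine but not a different route.
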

 
 \begin{proof}
 We will simply show that $\Theta' < 0$ on $]0, a^{*} [$.  Computing $\Theta'$ we get
   \begin{equation}\label{eq:rs1.2}
\Theta'(a) =  \exp (2a)  \,m^2(a)\, \psi_0(a)  (2 + 2 \mu(a) + \psi'_0(a)/\psi_0 (a))
  \end{equation}
Using  \eqref{mh.11},  we obtain  $a<a^{*} $
    \begin{equation}\label{eq:rs1.3}
\Theta'(a) = -  \exp (2a)  \,m^2 (a) ( \psi_0(a) -1)^2 \,.
  \end{equation}
  Note also  that $\Theta'(a^{*} - 0 ) =0$.
 \end{proof}
 
\subsection{Maximizers}\label{ssmax}
As before, let $0<m\in C^1([0,+\infty [)$ and let $0<b<+\infty $. In the following, all
functions are assumed to be real-valued if nothing else is
specified. We recall that $\mathcal G$ was introduced in \eqref{max.1} by
$$
{\cal G}=\{\theta \in H^1(]0,b[);\, |\theta '|\le \theta,\   \theta
(b)=1 \}\,. 
$$

\par If $\theta \in {\cal G}$, we have
$$
|\theta '/\theta |\le 1,\hbox{ i.e.\ }|(\log  \theta )'|\le 1\,,
$$
so $|\log  \theta (s)|\le b-s\,$,
$$
e^{s-b}\le \theta (s) \le e^{b-s},\ 0\le s\le b\, .
$$
In this subsection we consider the problem of maximizing the
functional on $\mathcal G$
\begin{equation}\label{max.2}J(\theta )=J_{]0,b[}(\theta )=\int_0^b (\theta
  ^2-\theta '^2)m^{-2}\,ds\,.
\end{equation}
We recall from (\ref{eq:gp2}) that we have the easy lower bound
$$
J_{\mathrm{sup}} (b):= \sup_{\theta \in \mathcal G} J(\theta) \geq \int_0^b m(s)^{-2}\,ds \,.
$$

We notice that ${\cal G}$ is a bounded subset of
$H^1(]0,b[)$ and that $0\le J\le {\cal O}(1)$ on that subset. As in 
Subsection \ref{exi}  we can show the existence of a
maximizer:
\begin{equation}\label{max.3}
\mbox{There exists } \theta_0 \in {\cal G},\hbox{ such that }J(\theta
_0)=\sup_{\theta \in {\cal G}}J(\theta )\,.
\end{equation}

\par If $0\le \sigma <\tau \le b$, $S,T>0$,  $|\log 
  (T/S)|\le \tau -\sigma $, put
\begin{equation}\label{max.4}
{\cal G }_{\sigma ,\tau }^{S,T}=\{u\in H^1_{S,T}(]\sigma ,\tau [);
|u'|\le u \}.
\end{equation}
We also define
\begin{equation}\label{max.5} 
{\cal G}_\tau ^T=\{u\in H^1_T(]0 ,\tau [);
|u'|\le u \},
\end{equation}
where $H^1_T(]0,\tau [):=\{ u\in H^1(]0,\tau [);\, u(\tau) =T \}$.\\
Finally, we introduce the functional
\begin{equation}\label{max.6}
J_{]\sigma ,\tau [ }(u)=\int_\sigma ^\tau  (u^2-u'^2)m^{-2}ds, \ u\in
H^1(]\sigma ,\tau [)\,.
\end{equation}

\par Let $\theta _0$ be a maximizer for $J$ on ${\cal G}$. If
$0\le \sigma <\tau \le b\,$, we put $S=\theta _0(\sigma )$,
$T=\theta _0(\tau )$. Then ${{\theta _0}_\vert}_{]\sigma ,\tau [}$ is
a maximizer for $J_{]\sigma ,\tau [ }$ on $\mathcal G_{\sigma ,\tau }^{S,T}$. Also
${{\theta _0}_\vert}_{]0,\tau [}$ is a maximizer for $J_{]0,\tau [ }$ on
${\cal G}_\tau ^T$.

\par For $0<  \sigma <\tau \le b$, we assume that $u_0\in
H_{S,T}^1(]\sigma ,\tau [)$ is a maximizer for $J_{]\sigma ,\tau [ }$
on $H^1_{S,T}(]\sigma ,\tau [)$.  Then by the same standard variational
arguments  as for minimizers  (cf. Proposition \ref{mh1}),   we
see that $u_0$ is $1/m$-harmonic  on $]\sigma ,\tau [$:
\begin{equation}\label{max.7}
P_{1/m}u_0:=- (\partial _s\circ m^{-2}\partial _s+m^{-2})u_0=0,\hbox{ on
}]\sigma ,\tau [,
\end{equation}
so $u_0\in H^2(]\sigma ,\tau [)$.

\par When $\sigma =0$, assume that $u_0\in H_T^1(]0 ,\tau [)$ is
a maximizer for $J_{]0,\tau [}$ on $H_T^1(]\sigma ,\tau [)$. Then by
variational calculations, we get
\begin{equation}\label{max.8}
  \begin{cases}
    P_{1/m}u_0=0\hbox{ on }]0,\tau [\,,\\
    \partial _su_0(0)=0,\\
    u_0(\tau )=T.
  \end{cases}
\end{equation}
Also, if $\tau -\sigma >0$ is small enough, we know that
\begin{equation}\label{max.9}m^2P_{1/m}\ge 1/|{\cal O}(1)|\hbox{ on
  }(H^2\cap H_{0,0}^1)(]\sigma ,\tau [),
\end{equation}
and consequently that
\begin{equation}\label{max.10}\begin{split}
&\forall S,T\in \mathbb R,\ \exists ! u=:g_{\sigma ,\tau }^{S,T},\hbox{ such that }\\
&P_{1/m}u=0\hbox{ on }]\sigma ,\tau [,\ u(\sigma )=S,\ u(\tau )=T.
\end{split}
\end{equation}
Similarly to what we have seen in Subsection \ref{ssint},  under
this assumption, $g_{\sigma ,\tau }^{S,T}$ is the unique maximizer for
$J_{]\sigma ,\tau [ }$ on $H^1_{S,T}(]\sigma ,\tau [)$.

\par When $0<\tau \le b$, $m^2P_{1/m}$ is self-adjoint on $L^2(]0,\tau
[,m^{-2}ds)$ with domain ${\cal D}=\{u\in H^1_{T=0}(]0,\tau [)\cap
H^2(]0,\tau[);\, \partial _s u(0)=0 \}$. \\

Moreover, $m^2P_{1/m}\ge 1/|{\cal
  O}(1)|$ when $\tau >0$ is small enough  and for
every $T\in \mathbb R$  we have a unique solution $u=:g^T_{\tau }$ of 
\begin{equation}\label{max.11}
P_{1/m}u=0\hbox{ on }]0,\tau [,\ \partial _su(0)=0,\ u(\tau )=T.
\end{equation}

\par Let $\theta _0$ be a maximizer for $J_{]0,b[}$ on ${\cal
  G}_{0,b}^1$. Let $0<\sigma <\tau \le b$ with $\tau -\sigma \ll 1$
and put $S=\theta _0(\sigma )$, $T=\theta _0(\tau )$. Then $g_{\sigma
  ,\tau }^{S,T}$ is the unique maximizer for $J_{]\sigma ,\tau [ }$ in
$H_{S,T}^1(]\sigma ,\tau [)$. If $g_{\sigma
  ,\tau }^{S,T}$ belongs to the smaller space
${\cal G}_{\sigma ,\tau }^{S,T}$ then it is also the unique maximizer
in that smaller space and we conclude that
\begin{equation}\label{max.12}
{{\theta _0}_\vert}_{]\sigma ,\tau [}=g^{S,T}_{\sigma ,\tau }\,.
\end{equation}

\par Similarly, with $\sigma =0$, if $0<\tau $ is small enough, we see
from (\ref{max.11})
that $g_\tau ^T\in {\cal G}_\tau ^T$, when $T=\theta _0(\tau )>0$. Now
$g^T_{\tau }$ is the unique maximizer for $J_{]0,\tau [ }$ on
$H_T^1(]0,\tau [)$ and a fortiori on ${\cal G}_\tau ^T$, and we conclude that 
\begin{equation}\label{max.13}
{{\theta _0}_\vert}_{]0 ,\tau [}=g^T_\tau .
\end{equation}

\par As above, let $\theta _0$ be a maximizer for $J_{]0,b[}$ on ${\cal
  G}_{0,b}^1$, put
\begin{equation}\label{max.14}
{\widetilde{\phi }} _0=\log \theta _0\,,\ {\widetilde{\psi }} _0= {\widetilde{\phi }} _0'=\theta _0'/\theta _0\,,
\end{equation}
and observe that $|{\widetilde{\psi }} _0|\le 1$. From (\ref{max.13}) we deduce that this inequality is strict near $s=0$.
\begin{lemma}\label{max1} We have $\theta _0'\le 0$, so $-\theta _0\le \theta
  _0'\le 0$, $-1\le {\widetilde{\psi }}_0\le 0$\,.
  \end{lemma}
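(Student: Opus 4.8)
The plan is to pass to the logarithm. Writing $\widetilde\phi_0=\log\theta_0$, the condition $\theta_0\in\mathcal G$ says exactly that $\widetilde\phi_0$ is $1$-Lipschitz on $[0,b]$ with $\widetilde\phi_0(b)=0$, and
\[
J(\theta_0)=\int_0^b\bigl(1-(\widetilde\phi_0')^2\bigr)\,e^{2\widetilde\phi_0}\,m^{-2}\,ds\,.
\]
Since $-\theta_0\le\theta_0'\le 0$ and $-1\le\widetilde\psi_0\le 0$ follow at once from $\theta_0\in\mathcal G$ as soon as we know $\theta_0'\le 0$, it suffices to prove that a maximizer $\widetilde\phi_0$ is non-increasing.

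First I would introduce the competitor built from the non-increasing envelope
\[
\eta(s):=\max_{s\le t\le b}\widetilde\phi_0(t)\,,\qquad 0\le s\le b\,,
\]
i.e.\ the smallest non-increasing function that is $\ge\widetilde\phi_0$. Using that $\widetilde\phi_0$ is $1$-Lipschitz one checks that $\eta$ is again $1$-Lipschitz: for $s<s'$ one has $0\le\eta(s)-\eta(s')\le s'-s$, since every $t\in[s,s']$ satisfies $\widetilde\phi_0(t)\le\widetilde\phi_0(s')+(s'-s)\le\eta(s')+(s'-s)$. Hence $\eta\in H^1(]0,b[)$, $\eta(b)=0$, so $\theta_1:=e^\eta\in\mathcal G$. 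Moreover $\eta$ is locally constant on the open set $U:=\{\eta>\widetilde\phi_0\}$: if $\widetilde\phi_0(s)<\eta(s)$, the maximum defining $\eta(s)$ is attained at some $\tau>s$, and then for $s<s'<\tau$ the interval $[s',b]$ still contains $\tau$, so $\eta(s)=\widetilde\phi_0(\tau)\le\eta(s')\le\eta(s)$, giving $\eta\equiv\eta(s)$ near $s$. Consequently $\eta'=0$ a.e.\ on $U$, and $\eta'=\widetilde\phi_0'$ a.e.\ on $[0,b]\setminus U$ (two $H^1$ functions agreeing on a measurable set have equal derivatives a.e.\ there).

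Next I would compare the integrands pointwise. Almost everywhere one has $e^{2\eta}\ge e^{2\widetilde\phi_0}>0$ and $1-(\eta')^2\ge 1-(\widetilde\phi_0')^2\ge 0$ — on $U$ the left-hand side is $1$ and the right-hand side is $\le 1$, and on the complement the two coincide. Multiplying these two nonnegative inequalities and then by $m^{-2}>0$ gives
\[
\bigl(1-(\eta')^2\bigr)\,e^{2\eta}\,m^{-2}\ \ge\ \bigl(1-(\widetilde\phi_0')^2\bigr)\,e^{2\widetilde\phi_0}\,m^{-2}
\]
almost everywhere, hence $J(\theta_1)\ge J(\theta_0)$. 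Since $\theta_0$ is a maximizer this must be an equality; but on $U$ the difference of integrands is $\ge(e^{2\eta}-e^{2\widetilde\phi_0})\,m^{-2}>0$, so $|U|=0$, and by continuity $\widetilde\phi_0\equiv\eta$. Thus $\widetilde\phi_0$ is non-increasing, i.e.\ $\theta_0'\le 0$, which is the claim.

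I expect the only real obstacle to be the two structural facts about the envelope $\eta$: that it is $1$-Lipschitz (so $e^\eta$ is an admissible competitor) and that $\eta'$ vanishes a.e.\ where $\eta>\widetilde\phi_0$; once these are in place the rest is a one-line monotonicity comparison. This argument is the counterpart, for the maximization problem, of Step~1 in the proof of Proposition~\ref{red1} (where a minimizer is replaced by an increasing function), and it is exactly the constraint $|\theta'|\le\theta$ that makes the envelope $e^\eta$ stay in $\mathcal G$.
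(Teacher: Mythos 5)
Your proof is correct, but it takes a genuinely different route from the paper. The paper's competitor keeps the modulus of the logarithmic derivative and only flips its sign: it sets $\widetilde{\psi }_1:=-|\widetilde{\psi }_0|$ and $\theta _1(t)=\exp \int_b^t\widetilde{\psi }_1\,ds$, so that the factor $1-\widetilde{\psi }^2$ in $\theta ^2-\theta '^2=\theta ^2(1-\widetilde{\psi }^2)$ is literally unchanged while the amplitude $\theta _1\ge \theta _0$ increases; strictness of $J(\theta _1)>J(\theta _0)$ is then obtained near $s=0$, using the previously established fact (from (\ref{max.13})) that $|\widetilde{\psi }_0|<1$ there. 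Your competitor is instead the exponential of the non-increasing envelope $\eta =\max_{[s,b]}\widetilde{\phi }_0$, which on the bad set $U$ increases \emph{both} factors ($e^{2\eta }>e^{2\widetilde{\phi }_0}$ and $1-(\eta ')^2=1$), so you get strictness directly on $U$ without invoking the behaviour of $\widetilde{\psi }_0$ near the origin; the price is the two structural facts about the envelope (that it is $1$-Lipschitz, hence admissible, and that $\eta '=0$ a.e.\ on $\{\eta >\widetilde{\phi }_0\}$), which you justify correctly — the rising-sun-type local constancy to the right of each point of $U$, plus differentiability a.e.\ of a Lipschitz function, is enough to conclude $\eta '=0$ a.e.\ there. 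Your argument is thus slightly more self-contained, and it is indeed the natural maximization counterpart of Step~1 in the proof of Proposition~\ref{red1}; the paper's sign-flip trick is more specific to the product structure $\theta ^2(1-\widetilde{\psi }^2)$ but shorter once the preliminaries on $\widetilde{\psi }_0$ near $0$ are in place.
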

\begin{proof}
  Assume that $\theta _0'>0$ on a set of positive measure and define
  $\theta _1\in {\cal G}^1_\tau $ by
  $$
\theta _1(b)=1,\ {\widetilde{\psi }} _1:=\theta _1'/\theta _1=-| {\widetilde{\psi }} _0|.
$$
Then $$
\theta _1(t)=\exp \int_b^t {\widetilde{\psi }} _1(s)ds,\ \ \theta _0(t)=\exp \int_b^t {\widetilde{\psi }} _0(s)ds
$$
and 
\begin{equation}\label{max.14.5}
\theta _1(s)\ge \theta _0(s)\,,
\end{equation}
 with strict  inequality near
$s=0$. \\
Now, for $j=0,1$,
$$
\theta _j(t)^2-\theta _j'(t)^2=\theta _j(t)^2(1- {\widetilde{\psi }}_j(t)^2)
$$
where the last factor in the right hand side is independent of
$j$. Hence by \eqref{max.14.5} , we get
$$
\theta _1(t)^2-\theta _1'(t)^2\ge \theta _0(t)^2-\theta _0'(t)^2
$$
and the inequality is strict near $t=0$, so
$J_{]0,b[}(\theta _1)>J_{]0,b[}(\theta _0)$, in contradiction with
the maximality of $\theta _0\,$.  
\end{proof}

We now employ first order ODEs as in Subsubsection \ref{sss3.4.2}. 
Let $f$ be an $1/m$-harmonic function on some interval $]\sigma
,\tau [\subset ]0,b[$ such
that
\begin{equation}\label{max.15}-f< f'\le 0\, .\end{equation}
Put $$\mu =m'/m\,.$$
 Then from $$(\partial _s\circ m^{-2}\circ \partial
_s+m^{-2})f=0\,,$$
we get
\begin{equation}\label{max.16}
(\partial _s^2-2\mu \partial _s+1)f=0\,.
\end{equation}
Writing $${\widetilde{\phi }} =\log  f \mbox{ and }  {\widetilde{\psi }} ={\widetilde{\phi }} '=f'/f\,,$$
we get
\begin{equation}\label{max.17}
 -1<{\widetilde{\psi }} \le 0\mbox{ and } {\widetilde{\phi }} ''+{\widetilde{\phi }} '^2-2\mu {\widetilde{\phi }} '+1=0\,.
\end{equation}
We can rewrite the last equation in the form
  \begin{equation}\label{max.17.5}
      {\widetilde{\psi }} '= 2\mu {\widetilde{\psi }} - {\widetilde{\psi }} ^2 -1,
    \end{equation}
      or equivalently, 
     \begin{equation}\label{max.18}
      {\widetilde{\psi }} '=2\left( \mu -\frac{1}{2}\left( {\widetilde{\psi }} +\frac{1}{{\widetilde{\psi }} } \right)
\right){\widetilde{\psi }}\,.
\end{equation}
Notice that this is the same equation as \eqref{mh.11}, after replacing $\mu$ with $-\mu$.
\par In the region $-1<{\widetilde{\psi }} <0$, we have $(-1/2)({\widetilde{\psi }} +1/{\widetilde{\psi }} )>1$,
hence
$$
2\left( \mu -\frac{1}{2}\left( {\widetilde{\psi }} +\frac{1}{{\widetilde{\psi }} } \right)
\right)>1+\mu \,,
$$
and we conclude that
\begin{equation}\label{eq:3.81new}
{\widetilde{\psi }} '<0,\hbox{ when }-1<{\widetilde{\psi }} <0\hbox{ and }\mu \ge -1.
\end{equation}
When, $\mu <-1$ and $-1<{\widetilde{\psi }} <0$, we have the
  equivalences 
$$
{\widetilde{\psi }} '<0 \Leftrightarrow \mu -\frac{1}{2}\left({\widetilde{\psi }} +\frac{1}{{\widetilde{\psi }} }
\right)>0 \Leftrightarrow g(\mu )<{\widetilde{\psi }} <0,
$$
where $g=g(\mu )$ is the unique solution in $]-1,0[$ of
$$
\mu =\frac{1}{2}\left( g+\frac{1}{g} \right) \mbox{ or equivalently } g^2-2\mu g+1=0\,,
$$
i.e.
\begin{equation}\label{max.19}
g(\mu )=\mu +\sqrt{\mu ^2-1}=\frac{1}{\mu -\sqrt{\mu ^2-1}}\,.
\end{equation}
In other terms, when $\mu <-1$, $-1<{\widetilde{\psi }} <0$, we have
\begin{equation}\label{max.20}
{\widetilde{\psi }} '\ge 0 \hbox{ if and only if }-1<{\widetilde{\psi }} \le g(\mu ).
\end{equation}

\par In all cases, we see directly from (\ref{max.17.5})  that
\begin{equation}\label{max.20a}
{\widetilde{\psi }} '(s)<0,\hbox{ when } |{\widetilde{\psi }} (s)|\le 1/|{\cal O}(1)|,
\end{equation}
so integral curves   of (\ref{max.18})  cannot enter a neighborhood of ${\widetilde{\psi }} =0$ from a
region where ${\widetilde{\psi }} \le -1/C$.

  \begin{remark}\label{max2}
    We have seen that the equations (\ref{mh.11}) and (\ref{max.18})
    differ only by a change of sign of $\mu $. There is a
    corresponding symmetry for the solutions: If $\psi \in C^1(]\sigma
    ,\tau [ ;]0,+\infty [)$, $0\le \sigma <\tau \le +\infty $, then
    \begin{equation}
   \widetilde{\psi }(s):=-1/\psi (s) 
   \end{equation} belongs to the same space and
    \begin{enumerate}
\item $\psi $ solves (\ref{mh.11}) if and only if  $\widetilde{\psi }$ solves
  (\ref{max.18}).
  \item Equivalently, if $u'/u=\psi $, $\theta '/\theta
    =\widetilde{\psi }(=-u/u')$, with $u,\theta >0$, then $u$ is
    $m$-harmonic if and only if  $\theta $ is $1/m$-harmonic.
    \item Pointwise: $\partial _s\psi (s)\ge 0$ $\Longleftrightarrow$ $\partial
      _s\widetilde{\psi }\ge 0$.
\item Pointwise: $1<\psi (s)<+\infty $ $\Longleftrightarrow$
  $-1<\widetilde{\psi }(s)<0$.
  \item We have $\psi (s)\to \infty $ when $s\to \sigma $ if and only if 
    $\widetilde{\psi} (s)\to 0$ when $s\to \sigma $.
    \item
    Let $s_0\in \{\sigma ,\tau  \}$. Then, $\psi (s)\to 1$ when $s\to
    s_0$ if and only if  $\widetilde{\psi }(s)\to -1$ when $s\to s_0$.
    \end{enumerate}
\end{remark}

\paragraph{Structure of maximizers.}
Let us return to the maximizer
$\theta_0 =e^{{\widetilde{\phi }}_0}$ introduced before Lemma \ref{max1}. We know
that $\theta_0$ is $1/m$-harmonic on some interval $]0,\tau[$,
$\tau >0$ and that ${\widetilde{\psi }}_0:= {\widetilde{\phi }}'_0/{\widetilde{\phi }}_0 \in [-1,0[$. From
\eqref{max.20}, we see that ${\widetilde{\psi }}'_0 <0$ near $0$ and
$$
-1 \leq {\widetilde{\psi }}_0 (s) \leq -1 /|\mathcal O (1)|\,,$$ on
$]\epsilon,b[$, for every $\epsilon >0$. Thus
whenever $\theta_0$ is $1/m$-harmonic  on a subinterval $\subset]\epsilon,b[$, we have the
differential equation \eqref{max.18} (with ${\widetilde{\psi }}$ replaced by
${\widetilde{\psi }}_0$) with a nice uniform control (no blow up). Also
\begin{equation}\label{max.21}
  ({\widetilde{\phi_0})_{\vert _{]\epsilon ,b[ } } } \in H^1(]\epsilon ,b[),
\end{equation}
for every $\epsilon >0$.
\par As in Subsection \ref{stm} we have 
\begin{prop}\label{max3}
Let $0<\sigma <\tau \le b$ and let us assume that \break  $ \lambda :=m_{]\sigma ,\tau [}({\widetilde{\psi }}
_0)> - 1\,$. Then there exists $ s_0\in ]\sigma ,\tau [$ and $\alpha ,\beta $
with
\begin{equation}\label{max.22}
0\le \alpha <s_0<\beta \le b\,,
\end{equation}
such that
\begin{equation}\label{max.23}
\theta_0 \hbox{ is } 1/m\hbox{-harmonic and} -1<{\widetilde{\psi }} _0<0 \hbox{ on }]\alpha ,\beta [\,.
\end{equation}
\begin{itemize}
\item 
If $\beta <b$, we have ${\widetilde{\psi }} _0(s)\to -1$, $s\nearrow \beta $\,.
\item If $\alpha >0$, we have ${\widetilde{\psi }} _0(s)\to -1$, $s\searrow \alpha\,.  $
\end{itemize}
We recall that ${\widetilde{\psi }} _0(s)\to 0 $, when $s\searrow 0\,$.\\
Moreover $s_0$ can be chosen so that ${\widetilde{\psi }} _0(s_0)$ is arbitrarily close to
$\lambda \, $.
\end{prop}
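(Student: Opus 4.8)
The plan is to transpose the proof of Proposition \ref{stm2} to the maximization setting, using the Riccati analysis of Subsubsection \ref{sss3.4.2} in the form appropriate to \eqref{max.18}, which, as noted there, is \eqref{mh.11} with $\mu$ replaced by $-\mu$. Since $\sigma>0$ and, by \eqref{max.21}, $\widetilde\phi_0\in H^1(]\sigma/2,b[)$, Proposition \ref{stm1} applied to $\phi=\widetilde\phi_0$ yields arbitrarily short intervals $I=]\widetilde\sigma,\widetilde\tau[\subset\,]\sigma,\tau[$ with $m_I(\widetilde\phi_0')=\lambda$. Fix such an $I$ with $\widetilde\tau-\widetilde\sigma$ small, and put $S=\theta_0(\widetilde\sigma)$, $T=\theta_0(\widetilde\tau)$, so that $\log(T/S)/(\widetilde\tau-\widetilde\sigma)=\lambda$. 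Here $\lambda\in\,]-1,0[$: indeed $\lambda>-1$ by hypothesis, while $\lambda<0$ because $\widetilde\psi_0\le-1/|\mathcal O(1)|$ on $]\sigma,b[$ (the bound recalled before \eqref{max.21}). The analog of Proposition \ref{mh5} for \eqref{max.18}, obtained from Subsubsection \ref{sss3.4.2} under $\mu\mapsto-\mu$, then gives that $g:=g_{\widetilde\sigma,\widetilde\tau}^{S,T}$ satisfies $g'/g=\lambda+\mathcal O(\widetilde\tau-\widetilde\sigma)$ on $[\widetilde\sigma,\widetilde\tau]$; since $\lambda\in\,]-1,0[$, for $\widetilde\tau-\widetilde\sigma$ small we get $-1<g'/g<0$ there, so $g\in\mathcal G_{\widetilde\sigma,\widetilde\tau}^{S,T}$.

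As recorded around \eqref{max.12}, $g$ is the unique maximizer of $J_{]\widetilde\sigma,\widetilde\tau[}$ on $H^1_{S,T}(]\widetilde\sigma,\widetilde\tau[)$; belonging to the smaller set $\mathcal G_{\widetilde\sigma,\widetilde\tau}^{S,T}$, it is the unique maximizer there as well, and since the restriction of $\theta_0$ to $]\widetilde\sigma,\widetilde\tau[$ is also a maximizer on that set, we conclude $\theta_0=g$ on $]\widetilde\sigma,\widetilde\tau[$. Thus $\theta_0$ is $1/m$-harmonic with $-1<\widetilde\psi_0<0$ there, and we may choose $s_0\in\,]\widetilde\sigma,\widetilde\tau[$ with $\widetilde\psi_0(s_0)=g'/g(s_0)=\lambda+\mathcal O(\widetilde\tau-\widetilde\sigma)$; letting $\widetilde\tau-\widetilde\sigma\to0$ gives the last assertion of the proposition. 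Now take $]\alpha,\beta[$ to be the largest open subinterval of $]0,b[$ containing $s_0$ on which $\theta_0$ is $1/m$-harmonic with $-1<\widetilde\psi_0<0$ (such intervals containing $s_0$ are stable under union, so this is well defined); it contains $]\widetilde\sigma,\widetilde\tau[$, which gives \eqref{max.22}--\eqref{max.23}. On $]\alpha,\beta[$, $\widetilde\psi_0$ solves \eqref{max.17.5} with a right-hand side bounded on compact subsets of $]0,b[$, so $\widetilde\psi_0$ is Lipschitz up to $[\alpha,\beta]\cap\,]0,b[$; hence the one-sided limits $\widetilde\psi_0(\beta-0)$ (if $\beta<b$) and $\widetilde\psi_0(\alpha+0)$ (if $\alpha>0$) exist, and by the bound $\widetilde\psi_0\le-1/|\mathcal O(1)|$ away from $0$ they lie in $]-1,0[$ as soon as they differ from $-1$.

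It remains to rule out $\widetilde\psi_0(\beta-0)\in\,]-1,0[$ when $\beta<b$, and symmetrically at $\alpha$. Suppose $\beta<b$ and $c':=\widetilde\psi_0(\beta-0)\in\,]-1,0[$. I would construct a short interval $]\sigma',\tau'[$ with $\sigma'<\beta<\tau'\le b$ on which the average $m_{]\sigma',\tau'[}(\widetilde\phi_0')$ is so close to $c'$ that it lies in a fixed compact subset of $]-1,0[$; then the argument of the preceding two paragraphs (Riccati estimate plus uniqueness of the maximizer) applies to $]\sigma',\tau'[$ and shows $\theta_0$ is $1/m$-harmonic with $-1<\widetilde\psi_0<0$ on $]\sigma',\tau'[\ni\beta$, contradicting the maximality of $]\alpha,\beta[$; hence $c'=-1$. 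To build $]\sigma',\tau'[$: pick $\sigma'\in\,]\alpha,\beta[$ close to $\beta$, so that $\int_{\sigma'}^\beta\widetilde\psi_0=c'(\beta-\sigma')+o(\beta-\sigma')$ by continuity of $\widetilde\psi_0$ on $[\alpha,\beta]$, and then pick $\tau'-\beta$ very small compared to $\beta-\sigma'$; using only the global bound $|\widetilde\psi_0|\le1$ on $]\beta,\tau'[$, the average $m_{]\sigma',\tau'[}(\widetilde\phi_0')$ is then within $o(1)$ of $c'$ while the total length $\tau'-\sigma'$ is small. The case $\alpha>0$ is symmetric, with $\sigma'<\alpha<\tau'$ and $\sigma'>0$. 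This completes the plan.

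The step I expect to be the main obstacle is the endpoint argument of the last paragraph: one must first secure the existence of the one-sided limits of $\widetilde\psi_0$ at $\alpha$ and $\beta$ (which comes from the ODE \eqref{max.17.5} holding on $]\alpha,\beta[$), and then carry out the delicate construction of a straddling interval with controlled average slope — the point being to make the side on which one has only the crude bound $|\widetilde\psi_0|\le1$ an order of magnitude shorter than the side on which $\widetilde\psi_0$ is already continuous, so that its contribution to the average is negligible. A subsidiary point requiring care is verifying that the quantitative Riccati estimates of Subsubsection \ref{sss3.4.2} transfer without change to \eqref{max.18} via $\mu\mapsto-\mu$ throughout the relevant range $\widetilde\psi\in\,]-1,0[$.
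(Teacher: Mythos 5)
Your proposal is correct and follows essentially the same route as the paper: the paper gives no separate proof of this proposition, stating only ``As in Subsection \ref{stm} we have'', i.e.\ one transposes the proof of Proposition \ref{stm2} via the symmetry $\mu\mapsto-\mu$ of Remark \ref{max2}, which is precisely what you do (short intervals with prescribed average slope from Proposition \ref{stm1}, the Riccati estimate identifying the maximizer with $g_{\widetilde\sigma,\widetilde\tau}^{S,T}$, then maximality of $]\alpha,\beta[$ and a contradiction argument at the endpoints). Your extra care in constructing the straddling interval at $\beta$ and in securing the one-sided limits of $\widetilde\psi_0$ only makes explicit what the paper leaves implicit.
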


Let $J \subset ]0,b[$ be the countable disjoint union of all open maximal intervals $I \subset ]0,b[ $, such that $\theta_0$ is $1/m$-harmonic and $-1 <  {\widetilde{\psi }}_0 <1$ on $I$.
\begin{prop}\label{max5}
${\widetilde{\psi }} _0$ is  uniformly  Lipschitz continous on $]0,b]$, $ >-1$ on $J$, and $=-1$ on
$]0,b[\setminus J$.
\end{prop}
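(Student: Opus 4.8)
The plan is to adapt, essentially line by line, the proof of Proposition~\ref{stm4}, under the following dictionary: the Riccati equation (\ref{mh.11}) is replaced by (\ref{max.17.5}), the threshold value $1$ by $-1$, the structure result Proposition~\ref{stm2} by Proposition~\ref{max3}, and the singular behaviour $\psi_0\to-\infty$ at $s=0$ by the tame behaviour $\widetilde{\psi}_0\to0$ at $s=0$. To begin with, ``$>-1$ on $J$'' is immediate from the definition of $J$ together with Lemma~\ref{max1}: on each component interval of $J$ one has $-1<\widetilde{\psi}_0<1$ by definition, and $\widetilde{\psi}_0\le0$ by Lemma~\ref{max1}, so $-1<\widetilde{\psi}_0\le0$ there.

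For the Lipschitz bound I would fix $t\in]0,b]$ and introduce, as in the proof of Proposition~\ref{stm4}, the one-sided Dini sets $\partial_s^{\pm}\widetilde{\psi}_0(t)$ of all limits of difference quotients $(\widetilde{\psi}_0(t+\epsilon_j)-\widetilde{\psi}_0(t))/\epsilon_j$ with $\epsilon_j\searrow0$ (resp. $\nearrow0$). The uniform data at hand are that $\mu=m'/m$ is continuous, hence bounded on $[0,b]$, and that $|\widetilde{\psi}_0|\le1$ everywhere (Lemma~\ref{max1}). On $J$, $\widetilde{\psi}_0$ is a classical $C^1$ solution of (\ref{max.17.5}), so $\partial_s^{\pm}\widetilde{\psi}_0(t)=\{2\mu(t)\widetilde{\psi}_0(t)-\widetilde{\psi}_0(t)^2-1\}$, which lies in a fixed interval $[-L,L]$, $L=2\sup_{[0,b]}|\mu|+2$. (Near $s=0$ one is automatically in $J$: by (\ref{max.13}) one has $\theta_0=g_\tau^T$ on a maximal interval $]0,\tau[$, and (\ref{max.11}) gives $\partial_s\theta_0(0)=0$, so $\widetilde{\psi}_0$ is $C^1$ up to $0$ with $\widetilde{\psi}_0(0)=0$, and $\widetilde{\psi}_0<0$ on $]0,\tau[$ by (\ref{max.20a}); thus the limit $\widetilde{\psi}_0\to0$ at $0$ causes no trouble.) For $t\in]0,b[\setminus J$, writing $\widetilde{\psi}=-1+\delta$ with $\delta=\widetilde{\psi}+1\ge0$ turns (\ref{max.17.5}) into $\widetilde{\psi}'=-2(1+\mu)(1-\delta)-\delta^2$; running the argument of Proposition~\ref{stm4} and using Proposition~\ref{max3} to see that $\widetilde{\psi}_0\to-1$ at every endpoint lying in $]0,b[$ of a component of $J$, I would obtain the analogues of (\ref{eq:3.41a})--(\ref{eq:3.42a}),
\begin{equation*}
\partial_s^{+}\widetilde{\psi}_0(t)\subset
\begin{cases}\{0\}&\text{if }\mu(t)\ge-1,\\ [0,\,-2(1+\mu(t))]&\text{if }\mu(t)<-1,\end{cases}
\qquad
\partial_s^{-}\widetilde{\psi}_0(t)\subset
\begin{cases}[-2(1+\mu(t)),\,0]&\text{if }\mu(t)\ge-1,\\ \{0\}&\text{if }\mu(t)<-1.\end{cases}
\end{equation*}
Since all these sets lie in $[-L,L]$, while $\widetilde{\psi}_0$ is the derivative of the locally absolutely continuous function $\widetilde{\phi}_0$, is continuous on the open set $J$, and tends to $-1$ at $\partial J\cap]0,b[$, the standard argument (a function with everywhere-bounded Dini derivatives is Lipschitz with the same bound) yields a representative of $\widetilde{\psi}_0$ that is Lipschitz on $]0,b]$ with constant $L$.

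It then remains to check $\widetilde{\psi}_0=-1$ on $]0,b[\setminus J$, which I would do with this continuous representative: if $\widetilde{\psi}_0(t)>-1$ for some $t\in]0,b[\setminus J$, then $\widetilde{\psi}_0>-1$ on a small interval $I\ni t$, so $m_I(\widetilde{\psi}_0)>-1$, and Proposition~\ref{max3} produces a maximal interval $]\alpha,\beta[\subset]0,b[$ on which $\theta_0$ is $1/m$-harmonic with $-1<\widetilde{\psi}_0<0$, together with a point $s_0\in I$ at which $\widetilde{\psi}_0(s_0)$ is arbitrarily close to $m_I(\widetilde{\psi}_0)>-1$, hence $s_0\in]\alpha,\beta[\subset J$; letting $I$ shrink to $\{t\}$ and again invoking the convergence $\widetilde{\psi}_0\to-1$ at the endpoints of components of $J$ in $]0,b[$ forces $t\in J$, a contradiction.

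I expect the main obstacle to be the bookkeeping hidden in ``running the argument of Proposition~\ref{stm4}'' for $t\in]0,b[\setminus J$: one must keep track of the countably many components of $J$ accumulating at $t$, use Proposition~\ref{max3} to guarantee $\widetilde{\psi}_0\to-1$ at each of their endpoints in $]0,b[$, and combine this with the sign of $\widetilde{\psi}'=-2(1+\mu)(1-\delta)-\delta^2$ near $\widetilde{\psi}=-1$ to pin down the one-sided inclusions above. This is precisely the step carried out in the proof of Proposition~\ref{stm4}, and it transfers here with no new ideas; the local analysis near the threshold is moreover symmetric under $\widetilde{\psi}\leftrightarrow-1/\psi$ (Remark~\ref{max2}), which makes the correspondence with the minimization case transparent.
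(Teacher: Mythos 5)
Your proof is correct and is essentially the argument the paper intends: Proposition~\ref{max5} is stated there without a written proof, on the strength of the analogy with Proposition~\ref{stm4} (via the symmetry $\widetilde{\psi}=-1/\psi$ of Remark~\ref{max2}), and your Dini-derivative inclusions are exactly the transcriptions of (\ref{eq:3.41a})--(\ref{eq:3.42a}) under that dictionary. Nothing in your write-up deviates from, or adds a gap to, the paper's route.
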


Using Remark \ref{max2}, we can carry over the results about
minimizers $u_0$ on maximal subintervals where $u_0'/u_0>1$, to
maximizers $\theta _0$ on
maximal subintervals where $\theta _0$ is $1/m$-harmonic with
$-1<\theta _0'/\theta _0<0$. Thus for instance we have

\begin{prop}\label{max6}
Assume that $\mu \ge -1$ on $[0,b]$ and let $\theta _0$ be a maximizer
for $J_{]0,b[}$ on ${\cal G}={\cal G}_b^1$. Then there exists
$\widetilde{b}\in ]0,b]$ such that
\begin{itemize}
\item[]$\theta _0\in C^2([0,\widetilde{b}])$, $\theta _0'(0)=0$,
  \item[]$\theta _0$ is $m$-harmonic and $-1<\theta _0'/\theta _0<0$
    on $]0,\widetilde{b}[$,
    \item[] $\theta _0(s)=e^{b-s}$ on $]\sigma ,b[$ (if
      this interval is $\ne \emptyset $).
\end{itemize}
\end{prop}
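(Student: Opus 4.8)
The plan is to deduce the proposition from the structural result for minimizers, Proposition~\ref{stm5}, by transporting it through the involution $\psi\leftrightarrow -1/\psi$ of Remark~\ref{max2}. Write $\theta_0=e^{\widetilde\phi_0}$ for the given maximizer, $\widetilde\psi_0=\widetilde\phi_0'=\theta_0'/\theta_0$, so that $-1\le\widetilde\psi_0\le 0$ by Lemma~\ref{max1}. First I would pin down the behaviour near $s=0$: by \eqref{max.13} the restriction of $\theta_0$ to a small interval $]0,\tau[$ is $g^T_\tau$, the solution of the Neumann problem \eqref{max.8}--\eqref{max.11}, so $\theta_0$ is $1/m$-harmonic and of class $C^2$ up to $s=0$ with $\theta_0'(0)=0$, i.e.\ $\widetilde\psi_0(0)=0$. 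Evaluating the Riccati equation \eqref{max.17.5} at $s=0$ gives $\widetilde\psi_0'(0)=-1<0$, so $\widetilde\psi_0<0$ just after $0$, and then \eqref{max.20a} (together with $\widetilde\psi_0\le -1/|\mathcal O(1)|$ on each $]\epsilon,b[$) prevents the curve from returning to a neighbourhood of $\widetilde\psi=0$; hence $\widetilde\psi_0<0$ on all of $]0,b]$ and $\psi_0:=-1/\widetilde\psi_0\ge 1$ is well defined there, with $\psi_0(s)\to+\infty$ as $s\searrow 0$.

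Next I would transport the Riccati structure. Let $J\subset]0,b[$ be the union of the maximal open intervals on which $\theta_0$ is $1/m$-harmonic with $-1<\widetilde\psi_0<0$ (Proposition~\ref{max5}). By Remark~\ref{max2}, on each such interval $\psi_0$ solves \eqref{mh.11}, and $\psi_0=1\Leftrightarrow\widetilde\psi_0=-1$, $1<\psi_0<\infty\Leftrightarrow -1<\widetilde\psi_0<0$; moreover, since $\mu\ge -1$, \eqref{eq:3.81new} gives $\widetilde\psi_0'<0$, hence $\psi_0'<0$, on every such interval --- the exact analogue of \eqref{mh.13}. Thus $\psi_0$ has the structure of the Riccati function of a minimizer as analysed before Proposition~\ref{stm5}: it equals $1$ on $]0,b[\setminus J$ and is strictly decreasing on $J$. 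Since $\widetilde\psi_0\in]-1,0[$ strictly just to the right of $0$ and is strictly decreasing there, the component of $J$ meeting small $s$ is of the form $]0,\widetilde b[$ for some $\widetilde b\in]0,b]$. Strict decrease of $\widetilde\psi_0$ on any component of $J$, combined with the fact (Proposition~\ref{max3}) that $\widetilde\psi_0\to-1$ at each endpoint $>0$ of such a component, forbids any further component to the right of $\widetilde b$: just past such an endpoint $\sigma>0$, $\widetilde\psi_0$ would sit slightly above $-1$ and decrease, hence fall below $-1$, contradicting $\widetilde\psi_0\ge-1$. Therefore $J=]0,\widetilde b[$ and $\widetilde\psi_0\equiv-1$ on $]\widetilde b,b[$, so there $\theta_0(s)=\theta_0(b)e^{b-s}=e^{b-s}$. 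The remaining assertions follow: on $]0,\widetilde b[$, $\theta_0$ solves \eqref{max.16}, i.e.\ $\theta_0''=2\mu\theta_0'-\theta_0$ with continuous right-hand side, and $\theta_0'=\widetilde\psi_0\theta_0$ extends continuously to $0$ and to $\widetilde b$ by the Lipschitz continuity of $\widetilde\psi_0$ (Proposition~\ref{max5}), so $\theta_0\in C^2([0,\widetilde b])$, with $\theta_0'(0)=0$ by \eqref{max.13}; and $-1<\theta_0'/\theta_0<0$ on $]0,\widetilde b[$ holds by the definition of $\widetilde b$ and $\widetilde\psi_0<0$.

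I do not anticipate a real obstacle here: by design almost every step is a mechanical translation of the minimizer analysis through Remark~\ref{max2}, relying on items already established (Propositions~\ref{max3} and \ref{max5}, and \eqref{eq:3.81new}, \eqref{max.13}, \eqref{max.20a}). The one point that is not purely symmetric is the left endpoint: the minimizer condition $u_0(0)=0$ (equivalently $\psi_0\to+\infty$) corresponds to $\theta_0'(0)=0$ (equivalently $\widetilde\psi_0(0)=0$), which has to be read off from the variational Neumann characterization \eqref{max.8}--\eqref{max.13}, not from the involution alone; this is also what yields $0\in\overline J$, the $C^2$-regularity up to $0$, and $\theta_0'(0)=0$. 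One should additionally note the edge case $\widetilde b=b$ (empty interval $]\widetilde b,b[$), in which $\widetilde\psi_0$ simply never reaches $-1$ on $]0,b]$; this is covered by the same monotonicity observation.
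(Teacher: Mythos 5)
Your proof is correct and follows exactly the route the paper intends: the paper states Proposition~\ref{max6} without detailed proof, merely indicating that the minimizer analysis (Proposition~\ref{stm5} and the discussion preceding it) carries over via the involution of Remark~\ref{max2}, and you have supplied precisely those details, including the one genuinely non-symmetric point, namely that the left boundary behaviour comes from the Neumann characterization \eqref{max.8}--\eqref{max.13} rather than from the involution. The only discrepancies are typographical in the statement itself ($]\sigma,b[$ should read $]\widetilde b,b[$, and ``$m$-harmonic'' should read ``$1/m$-harmonic''), which you have interpreted in the intended way.
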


\par We end this subsection with a discussion of global maximizers.
Let
$$
{\cal G}(]0,+\infty [):=\{u\in H^1_{\mathrm{loc}}([0,+\infty [);\,
0\le u'\le u ,\ u>0\hbox{ on }]0,+\infty [\}.
$$
We say that $\theta _0\in {\cal G}(]0,+\infty [)$ is a maximizer (or a
global maximizer when emphasizing that we work on the whole half axis)
if ${{\theta _0}_\vert}_{]0,b[}$ is a maximizer in ${\cal
  G}_b^{\theta _0(b)}$ for every $a>0$.
\begin{prop}\label{max7}
A global maximizer $\theta _0\in {\cal G}(]0,+\infty [)$ exists.
\end{prop}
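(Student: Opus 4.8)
The plan is to transpose to the half-line the compactness argument behind (\ref{max.3}) and Proposition~\ref{exi1}, in the same spirit as the exhaustion argument of Proposition~\ref{stm7}. First I would fix an exhausting sequence $b_1<b_2<\cdots$ with $b_n\to+\infty$ and, for each $n$, choose a maximizer $\Theta^{(n)}\in{\cal G}_{b_n}^1$ as provided by (\ref{max.3}). Two structural facts are used repeatedly: the functional $J$ is homogeneous of degree $2$ and the constraint $|u'|\le u$ is scale invariant, so $c\,\Theta^{(n)}$ is a maximizer in ${\cal G}_{b_n}^c$ for every $c>0$; and, exactly as in the discussion preceding (\ref{max.12}), the restriction of a maximizer to a smaller window $]0,b'[$ (with $b'\le b_n$) is again a maximizer, on ${\cal G}_{b'}^{\Theta^{(n)}(b')}$. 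Normalizing, set $\theta^{(n)}:=\Theta^{(n)}/\Theta^{(n)}(b_1)$, so that $\theta^{(n)}(b_1)=1$, so that $\theta^{(n)}$ is still a maximizer on each window with boundary value $\theta^{(n)}(b')$, and, by Lemma~\ref{max1} together with the rescaling, $-\theta^{(n)}\le(\theta^{(n)})'\le 0$ on $]0,b_n[$.

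Next I would run a diagonal compactness argument. On any fixed $[0,L]$, as soon as $b_n>L$, the inequality $|(\theta^{(n)})'|\le\theta^{(n)}$ and $\theta^{(n)}(b_1)=1$ force $e^{-|s-b_1|}\le\theta^{(n)}(s)\le e^{|s-b_1|}$, so $(\theta^{(n)})_n$ is bounded in $W^{1,\infty}([0,L])$ and bounded below away from $0$ there. Extracting diagonally over $L=1,2,\dots$ gives a subsequence $\theta^{(n_k)}$ and a function $\theta_0\in H^1_{\mathrm{loc}}([0,+\infty[)$ with $\theta^{(n_k)}\to\theta_0$ locally uniformly and $(\theta^{(n_k)})'\rightharpoonup\theta_0'$ weakly in $L^2_{\mathrm{loc}}$. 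Passing to the limit in the pointwise inequalities, $\theta_0>0$ on $]0,+\infty[$ and $-\theta_0\le\theta_0'\le 0$ (each of $\theta_0\pm\theta_0'\ge0$ is preserved under weak $H^1$-limits), so $\theta_0\in{\cal G}(]0,+\infty[)$.

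It then remains to check that $\theta_0|_{]0,b[}$ is a maximizer for $J_{]0,b[}$ on ${\cal G}_b^{\theta_0(b)}$ for every $b>0$. Writing $J_{\mathrm{sup}}(b)$ for the maximum over ${\cal G}_b^1$, homogeneity gives that the maximum over ${\cal G}_b^T$ equals $T^2J_{\mathrm{sup}}(b)$, so for $b_{n_k}>b$ we have $J_{]0,b[}(\theta^{(n_k)})=\theta^{(n_k)}(b)^2\,J_{\mathrm{sup}}(b)$. Here $\int_0^b(\theta^{(n_k)})^2m^{-2}\,ds\to\int_0^b\theta_0^2m^{-2}\,ds$ by uniform convergence, while $u\mapsto\int_0^b|u'|^2m^{-2}\,ds$ is convex and $H^1$-continuous, hence weakly lower semicontinuous; therefore $J_{]0,b[}$ is weakly upper semicontinuous along our subsequence and $\limsup_kJ_{]0,b[}(\theta^{(n_k)})\le J_{]0,b[}(\theta_0)$. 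Since also $\theta^{(n_k)}(b)\to\theta_0(b)$, this yields $\theta_0(b)^2J_{\mathrm{sup}}(b)\le J_{]0,b[}(\theta_0)$. On the other hand $\theta_0|_{]0,b[}\in{\cal G}_b^{\theta_0(b)}$, so the reverse inequality $J_{]0,b[}(\theta_0)\le\theta_0(b)^2J_{\mathrm{sup}}(b)$ holds trivially; hence equality, i.e.\ $\theta_0|_{]0,b[}$ is a maximizer, which proves the proposition.

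The delicate point, and the one I expect to be the main obstacle, is precisely this last step: weak convergence only gives $\liminf_k\int_0^b|(\theta^{(n_k)})'|^2m^{-2}\,ds\ge\int_0^b|\theta_0'|^2m^{-2}\,ds$, i.e.\ \emph{upper} semicontinuity of $J$, and one must notice that this is exactly the direction that works here, the reverse inequality being free because the finite-window maxima are attained and scale quadratically. An alternative mirroring Proposition~\ref{stm7} is an explicit exhaustion-and-gluing: rescale successive maximizers to agree at $b_j$ and concatenate; there the extra subtlety is that the two-sided constraint $|\theta'|\le\theta$ rules out any jump of $\theta'$ at the junction, so one has to invoke the local structure of maximizers (Propositions~\ref{max3}, \ref{max5}, \ref{max6} and Remark~\ref{max2}, plus the $1/m$-harmonic analogue of Remark~\ref{mh3}) to see that the two pieces match to first order near $b_j$. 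The compactness route avoids this issue altogether.
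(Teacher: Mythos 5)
Your proof is correct, but it follows a genuinely different route from the paper's. The paper disposes of Proposition~\ref{max7} in one line: the exhaustion-and-gluing proof of Proposition~\ref{stm7} ``applies with minor changes'' --- one rescales successive finite-window maximizers so that they agree at $b_j$, splices them, and uses the restriction property plus additivity of $J$ to see that the spliced function is still a maximizer. You instead normalize the finite-window maximizers at a fixed point, derive uniform local $W^{1,\infty}$ bounds from $|\theta'|\le\theta$, extract a diagonal weak-$H^1_{\mathrm{loc}}$ limit, and identify the limit as a maximizer on every window via semicontinuity combined with the quadratic scaling of the finite-window maxima. The key observation --- that weak lower semicontinuity of $u\mapsto\int|u'|^2m^{-2}$ gives \emph{upper} semicontinuity of $J$, which is exactly the useful direction since the reverse inequality is free --- is exactly right and is what makes the compactness route work for a supremum problem where the direct method would normally fail. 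Your closing concern about the gluing alternative is, however, unfounded: the constraint $|\theta'|\le\theta$ is an a.e.\ (distributional) inequality and the spliced function is continuous and locally $H^1$, so a jump of $\theta'$ at a junction $b_j$ creates no Dirac mass and no first-order matching is required; this is precisely why the paper can reuse the proof of Proposition~\ref{stm7} verbatim. In sum, your argument buys robustness (no splicing, standard calculus-of-variations machinery), while the paper's buys brevity and an explicit construction; both are valid.
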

Indeed, the proof of Proposition \ref{stm7} applies with minor
changes.\\

\par The discussion of the structure of maximizers in ${\cal G}_b^1$
carries over directly to that of global maximizers. In particular, if
$\theta _0$ is a global maximizer, then $\theta _0$ is $1/m$-harmonic
with $-1<\theta _0'/\theta _0<0$ on a maximal interval interval of the
form $]0,b^{*} [$ for some $b^{*} \in ]0,+\infty ]$. $b^{*} $ is
uniquely determined and (the $1/m$-harmonic function) ${{\theta
    _0}_\vert}_{]0,b^{*} [}$ is unique up to a constant positive factor.

\par By Remark \ref{max2}, we have
\begin{equation} \label{eq:bstar=astar}b^{*} =a^{*} .
\end{equation}

$b^{*} $ is also characterized as the largest number in $]0,+\infty ]$
such that the smallest eigenvalue of $K^{N\widetilde{R}}_{1/m,b}$ is
$>0$ for $b<b^{*} $. Here $K^{N\widetilde{R}}_{1/m,b}$ is defined as
in the introduction, with $m$ replaced by $1/m$ and with the
domain
$$
{\cal D}(K^{N\widetilde{R}}_{1/m,b})=\{
u\in H^2(]0,b[);\, u'(0)=0,\ u'(b)=-u(b)
\} .
$$
 
As for the minimization problem, we have 
  \begin{prop}\label{prop3.26}
For $\mathbb R\ni b \in ]0,a^{*}]$, we have 
 \begin{equation}
\sup_{{\cal G}} \int
_0^b (\theta^2-\theta'^2)m^{-2} \,ds =  -\frac{\widetilde{\psi }_0(b)}{m(b)^2}=  \frac {1}{m(b)^2} \frac{1}{\psi_0(b)} \,.
\end{equation}
In particular, when $b=a^{*}<+\infty  $: 
  \begin{equation}
  \sup_{{\cal G}} \int
_0^b (\theta^2-\theta'^2)m^{-2} \,ds  =  \frac {1}{m(a^{*} )^2}\,.
\end{equation}  
  \end{prop}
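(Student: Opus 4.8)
The plan is to follow the proof of Proposition~\ref{prop3.18} \emph{mutatis mutandis}, using the duality between the two problems recorded in Remark~\ref{max2}. Since $b\le a^*=b^*$ (recall \eqref{eq:bstar=astar}), the first step is to exhibit an explicit maximizer. Let $\theta_0$ be a global maximizer (Proposition~\ref{max7}); by the structure results established after Proposition~\ref{max7} it is $1/m$-harmonic with $-1<\theta_0'/\theta_0<0$ on $]0,b^*[\,\supseteq\,]0,b[$, and by \eqref{max.8} (equivalently \eqref{max.11}) it satisfies the Neumann condition $\theta_0'(0)=0$. Restricting $\theta_0$ to $]0,b[$ and rescaling by the positive constant $\theta_0(b)^{-1}$ — which changes neither the $1/m$-harmonicity nor the condition at $0$, and multiplies $J_{]0,b[}$ by $\theta_0(b)^{-2}$ — I obtain a maximizer, still denoted $\theta_0$, lying in $\mathcal G=\mathcal G_b^1$ and satisfying $P_{1/m}\theta_0=0$ on $]0,b[$, $\theta_0'(0)=0$ and $\theta_0(b)=1$; in particular $\theta_0\in H^2(]0,b[)$.

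Next I would compute $J_{]0,b[}(\theta_0)$ by integration by parts, using the equation $P_{1/m}\theta_0=0$ in the form $\partial_s(m^{-2}\theta_0')=-m^{-2}\theta_0$. Integrating the term $\int_0^b\theta_0'\,(m^{-2}\theta_0')\,ds$ by parts gives
\[
\int_0^b (\theta_0')^2\,m^{-2}\,ds=\bigl[m^{-2}\theta_0\theta_0'\bigr]_0^b+\int_0^b\theta_0^2\,m^{-2}\,ds ,
\]
hence $J_{]0,b[}(\theta_0)=\int_0^b\bigl(\theta_0^2-(\theta_0')^2\bigr)m^{-2}\,ds=-\bigl[m^{-2}\theta_0\theta_0'\bigr]_0^b$. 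The boundary contribution at $s=0$ vanishes because $\theta_0'(0)=0$, while at $s=b$ it equals $m^{-2}(b)\,\theta_0(b)\,\theta_0'(b)=m^{-2}(b)\,\widetilde\psi_0(b)$, since $\theta_0(b)=1$ and $\widetilde\psi_0=\theta_0'/\theta_0$. Therefore $J_{\mathrm{sup}}(b)=-\widetilde\psi_0(b)/m(b)^2$, which is the first claimed equality.

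For the second equality I would invoke Remark~\ref{max2}: the log-derivative $\widetilde\psi_0$ of the $1/m$-harmonic global maximizer and the log-derivative $\psi_0$ of the $m$-harmonic global minimizer solve \eqref{max.18} and \eqref{mh.11} respectively, with matching behaviour as $s\searrow0$ ($\widetilde\psi_0\to0$, respectively $\psi_0\to+\infty$, i.e.\ $-1/\psi_0\to0$); by uniqueness for these Riccati equations one gets $\widetilde\psi_0=-1/\psi_0$ on $]0,a^*[$, so that $-\widetilde\psi_0(b)/m(b)^2=1/\bigl(m(b)^2\psi_0(b)\bigr)$. Finally, in the case $b=a^*=b^*<+\infty$ one has $\widetilde\psi_0(b^*)=-1$ (equivalently $\psi_0(a^*)=1$), which yields the value $1/m(a^*)^2$.

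The main obstacle is the bookkeeping in the first step: one must be certain that for $b\le a^*$ the maximizer really is $1/m$-harmonic on \emph{all} of $]0,b[$ with Neumann data at $s=0$, so that no flat piece (with $\widetilde\psi_0\equiv-1$) intervenes, and that the integration by parts is legitimate up to both endpoints — near $s=0$ because $\theta_0\in H^2(]0,b[)$ with $\theta_0'(0)=0$, and at $s=b=b^*$ because the limit $\widetilde\psi_0(b^*)=-1$ is attained (Proposition~\ref{max5}). Everything else runs exactly parallel to the minimization case treated in Subsections~\ref{red}--\ref{ss3.6}, so I expect no real difficulty.
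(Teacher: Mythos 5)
Your proof is correct and follows essentially the same route as the paper's: restrict and normalize the global maximizer to $]0,b[$, integrate by parts using the $1/m$-harmonicity and the Neumann condition at $0$ to get $-\widetilde{\psi}_0(b)/m(b)^2$, then convert via Remark~\ref{max2} and $b^{*}=a^{*}$. The paper's proof is just a terser version of yours (it states the integration by parts without displaying the boundary-term bookkeeping), so no further changes are needed.
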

\begin{proof}
Similarly to the proof of Proposition \ref{prop3.18}, we can this time start from the global maximizer $\theta_0$ and compute for $b\leq b^{*} $ the integral  $ \int
_0^b (\theta^2-\theta'^2)m^{-2} \,ds$ with $\theta (s) = \theta_0 (s)/\theta_0 (b)$. We obtain
\begin{equation}
 \int
_0^b (\theta^2-\theta'^2)m^{-2} \,ds = - \theta'_0(b)\, m(b)^{-2}\,.
\end{equation}
We then use \eqref{eq:bstar=astar} and Remark \ref{max2}.
\end{proof}
\begin{remark}
In the case when $m=1$. We have  $b^{*} =\frac \pi 4$. 
 $$
 \theta_0 (s) = \sqrt{2} \cos s   \,.
 $$
 The corresponding energy is under the condition $0 < b  \leq \frac \pi 4$,
 $$
 \int_0^{b }  (\theta(s)^2-\theta'(s)^2) \,ds = \tan  b  \,.
 $$
 \end{remark}

\section{Optimization in Th. \ref{Th1.2}: case
  $\epsilon_1= - \epsilon_2 = +$.} \label{s4} \setcounter{equation}{0} 
\subsection{Reduction to  $\omega=0$ and $ r(0)=1$}\label{ss4.1new}
   
  Let $A$, $r=r(\omega )$, $\omega $ be as in Theorem \ref{Th1.2} and 
  (\ref{defr}). Let $\hat {\omega }\in \mathbb R$,
  $\hat {r}=\hat {r}(\hat {\omega })>0$. Then
  $(\hat {A}, \hat {r}(\hat {\omega
  }),\hat {\omega })$ has the same properties, if we define
  $\hat {A}$ by
  $$
\frac{1}{r}(A-\omega
)=\frac{1}{\hat {r}}(\hat {A}-\hat {\omega }).
$$
Notice here that (\ref{defr}) can be written
$$
1=\sup_{\Re w>0}\| r(\omega )(A-\omega -w)^{- 1}\|
$$
and that
$$
r(\omega )(A-\omega -w)^{-1}=\hat {r}(\hat {\omega
})(\hat {A }-\hat {\omega }-\hat {w})^{-1},
$$
if $\hat {w}/\hat {r}=w/r$.

Let $S(t)=\exp ( tA)$, $\hat {S}(\hat {t})=\exp
(\hat {t}\hat {A})$, $t,\hat {t}\ge 0$. If $\|S(t)\|\le
m(t)$ for some $t\ge 0$, then $\|\hat {S}(\hat {t})\|\le \hat {m}(\hat {t})$
if
$$
\frac{\hat {m}(\hat {t})}{e^{\hat {\omega} \hat {t}}}=\frac{m(t)}{e^{\omega t}},\ \ \hat {r}\hat {t}=rt.
$$
This follows from,
$$
e^{-\omega t}S(t)=\exp t(A-\omega )=\exp
\hat {t}(\hat {A}-\hat {\omega })=e^{-\hat {\omega
  }\hat {t}}\hat {S}(\hat {t}).
$$

Theorem
  \ref{Th1.2} tells us that if $\|S(t)\|\le m(t)$, $t\ge 0$, then
  $\|S(t)\|\le m_\mathrm{new}(t)$, for $t\ge 0$, where
  \begin{equation}\label{LLa.5}
 \frac{m_\mathrm{new}(t)}{e^{\omega t}}= \frac{\|(r(\omega
  )^2\Phi^2 -\Phi '^2)^{\frac{1}{2}}_-\,m/e^{\omega \cdot }\|_{L^2([0,t[) } \|(r(\omega)^2 \Psi
  ^2- \Psi '^2)^{\frac{1}{2}}_-\, m/e^{\omega \cdot }\|_{L^2([0,t[) }}
{\int_0^t (r(\omega
    )^2\Phi^2 -\Phi '^2)^{\frac{1}{2}}_{\epsilon_1}(r(\omega )^2\iota_t \Psi^2-\iota_t \Psi
    '^2)^{\frac{1}{2}}_{\epsilon_2} ds}\,.
\end{equation}
With $\hat {\Phi }(\hat {t})=\Phi (t)$,
$\hat {\Psi }(\hat {t})=\Psi (t)$, we have
$\Phi '(t)/r(\omega )=\hat {\Phi
}'(\hat {t})/\hat {r}(\hat {\omega })$ and similarly
for $\Psi ' $, $\hat {\Psi }'$. If
$\hat {m}_\mathrm{new}(\hat {t})$ is defined by
$\hat {m}_\mathrm{new}(\hat {t})/e^{\hat {\omega
  }\hat {t}}= m_{\mathrm{new}}(t)/e^{\omega t}$, then
(\ref{LLa.5}) implies the analogous relation for
$\hat {m}_{\mathrm{new}}$:
\begin{equation}\label{LLb.5}
 \frac{ \hat {m}_\mathrm{new}(\hat {t})
 }{e^{\hat {\omega }\hat {t}}}=
  \frac{\|(\hat {r}(\hat {\omega })^2\hat {\Phi }^2 -\hat {\Phi }
    '^2)^{\frac{1}{2}}_-\,\hat {m}/e^{\hat {\omega }\cdot }\|_{L^2([0,\hat {t}[) }
    \|(\hat {r} (\hat {\omega })^2\hat {\Psi }
    ^2- \hat {\Psi  }'^2)^{\frac{1}{2}}_-\, \hat {m}/e^{\hat {\omega }\cdot }\|_{L^2([0,\hat {t}[) }}
  {\int_0^{\hat {t}} (\hat {r}(\hat {\omega })^2\hat {\Phi }^2 -\hat {\Phi }
    '^2)^{\frac{1}{2}}_{\epsilon_1}
   ( \hat {r} (\hat {\omega })^2\iota_{\hat {t}} \hat {\Psi }^2-\iota_{\hat {t}} \hat {\Psi }
    '^2)^{\frac{1}{2}}_{\epsilon_2} d\hat {s}}\,.
\end{equation}
We also saw above that $\|\hat {S}(\hat {t})\|\le
\hat {m}_\mathrm{new}(\hat {t})$. Thus if we have proved
Theorem~\ref{Th1.2} for $(A,\omega ,r,m)$ we get it also for
$(\hat {A},\hat {\omega },\hat {r},\hat {m})$, and
vice versa. In particular we could reduce the proof of the theorem to
the special case when $\omega =0$, $r(\omega )=1$.

\par We review the above scaling in a slightly special case, keeping
an eye on the scaling of some optimizers from Section \ref{s3}.
 Let $\hat {A}$, $\hat {r}=\hat {r}(\hat {\omega }
 )$, $\hat {\omega } $ be as in Theorem \ref{Th1.2} and 
  (\ref{defr}), where we have added hats for notational
  convenience. Let
  $$A=\frac{1}{\hat {r}(\hat {\omega
    })}(\hat {A}-\hat {\omega }).$$
  As above, we check that $A$ satifies the general assumptions with
  $\omega =0$, $r=r(\omega )=1$. With $t=\hat {r}\hat {t}\ge
  0$, we have
  $$
\|e^{tA}\|\le m(t)\Leftrightarrow
\|e^{\hat {t}\hat {A}}\|\le \hat {m}(\hat {t)},
$$
if $m(t)>0$, $\hat {m}(\hat {t})>0$ are related by
$$
m(t)=e^{-\hat {t}\hat {\omega
  }}\hat {m}(\hat {t}),\hbox{ or equivalently
}\hat {m}(\hat {t})=e^{t\hat {\omega }/\hat {r}}m(t).$$
Theorem \ref{Th1.2} applies to
$\hat {S}(\hat {t})=e^{\hat {t}\hat {A}}$. It is a
little more scale invariant to rewrite (\ref{LL.5}) as
\begin{equation}\label{s4.1}
e^{-\hat {\omega
  }\hat {t}}\|e^{\hat {t}\hat {A}}\|\le
\frac{\|(\hat {\Phi }^2-(\hat {\Phi
  }'/\hat {r})^2)_-^{1/2}e^{-\hat {\omega }\cdot
  }\hat {m}\|_{[0,\hat {t}]}
\|(\hat {\Psi }^2-(\hat {\Psi
  }'/\hat {r})^2)_-^{1/2}e^{-\hat {\omega }\cdot }\hat {m}\|_{[0,\hat {t}]}
}
{\int_0^{\hat {t}}(\hat {\Phi }^2-(\hat {\Phi
  }'/\hat {r})^2)_{\epsilon _1}^{1/2}
((\iota _{\hat {t}}\hat {\Psi })^2-(\iota _{\hat {t}}\hat {\Psi
  }')/\hat {r})^2)_{\epsilon _2}^{1/2} d\hat {s}
},
\end{equation}
where the
subscript $[0,\hat {t}]$ indicates the interval over which we
take the $L^2$-norm.

\par Putting $s=\hat {r}\hat {s}$, $\Phi (s)=\hat {\Phi
}(\hat {s})$, $\Psi (s)=\hat {\Psi }(\hat {s})$, we get
$\hat {\Phi }'/\hat {r}=\Phi '$, $\hat {\Psi
}'/\hat {r}=\Psi '$,
\begin{equation}\label{s4.2}
e^{-\hat {\omega
  }\hat {t}}\|e^{\hat {t}\hat {A}}\|\le
\frac{\|(\Phi ^2-\Phi
  '^2)_-^{1/2}m\|_{[0,t]}
\|(\Psi ^2-\Psi '^2)_-m\|_{[0,t]}
}
{\int_0^t(\Phi ^2-\Phi
'^2)_{\epsilon _1}^{1/2}
(\iota _{t}\Psi ^2-\iota _{t}\Psi '^2)_{\epsilon _2}^{1/2} ds
}.
\end{equation}
In (\ref{red.1}) we studied the minimization of a factor in the
enumerator,
\begin{equation}\label{s4.3}
\inf_{u\in H^1_{0,1}(]0,a[)}I(u),\hbox{ where }I(u)=I_{]0,a[}(u)=\int
_0^a (u'^2-u^2)_+m^2ds.
\end{equation}
The corresponding problem appearing in (\ref{s4.1}) is
\begin{equation}\label{s4.4}
\inf_{\hat {u}\in H^1_{0,1}(]0,\hat {a}[)}\hat {I}(\hat {u}),\hbox{ where }\hat {I}(\hat {u})=\hat {I}_{]0,\hat {a}[}(\hat {u})=\int
_0^{\hat {a}}
((\hat {u}'/\hat {r})^2-\hat {u}^2)_+(e^{-\hat {\omega
  }\cdot }\hat {m})^2d\hat {s}.
\end{equation}
$u$ is a minimizer for (\ref{s4.3}) iff $\hat {u}$ is a minimizer
for (\ref{s4.4}) when $u$, $\hat {u}$ are related by
\begin{equation}\label{s4.5}
\hat {u}(\hat {s})=u(s).
\end{equation}

\par We have seen that a minimizer $u$ for (\ref{s4.3}) belongs to the
space
$$
{\cal H}_{0,1}(]0,a[)=\{u\in H^1(]0,a[);\, 0\le u\le u' \}.
$$
The corresponding space for (\ref{s4.4}) is then
$$
\hat {{\cal H}}_{0,1}(]0,\hat {a}[)=\{\hat {u}\in H^1(]0,\hat {a}[);\, 0\le \hat {u}\le \hat {u}'/\hat {r} \}.
$$
We have seen in Subsection \ref{stm} that $I$ has an associated global
minimizer $u$ which is $m$-harmonic with $u'>u$ on $]0,a^*[$ and when
$a^*<+\infty $ we have $u'(a^*)=u(a^*)$. Moreover $a^*$ is uniquely
determined, and up to multiplication with a positive constant, the same
holds for ${{u}_\vert}_{]0,a^*[}$. Similarly we have a global
minimizer $\hat {u}$ associated to $\hat {I}$, related to a
global minimizer $u$ via (\ref{s4.5}). The corresponding variational
equation on any open interval where $0\le
\hat {u}<\hat {u}'/\hat {r}$, is
\begin{equation}\label{s4.6}
\left(\frac{1}{\hat {r}}\partial _{\hat {s}}\circ
\left(e^{-\hat {\omega
    }\hat {s}}\hat {m}(\hat {s}) \right)^2
\frac{1}{\hat {r}}\partial _{\hat {s}}+
\left(e^{-\hat {\omega
    }\hat {s}}\hat {m}(\hat {s}) \right)^2\right)
\hat {u}=0 .
\end{equation}
This holds on $]0,\hat {a}^*[$, where
$a^*=\hat {r}\hat {a}^*$ and when $a^*<\infty $, we have
$$\hat {u}'(\hat {a}^*)/\hat {r}=\hat {u}(\hat {a}^*).$$

\par In Subsection \ref{sss3.4.2} we studied a Riccati equation for an
$m$-harmonic function $u$ in terms of the logarithmic derivative $\psi =u'/u$. In the case
of (\ref{s4.6}) with general $\hat {r}$, $\hat {\omega }$,
the natural logarithmic derivative is $\hat {\psi }=(\hat {u}'/\hat {r})/\hat {u}$. \\
 In conclusion  Theorem \ref{prop4.9bis} is a direct consequence of Theorem \ref{prop4.9}.

\subsection{ Other preliminaries}
We now  assume $\omega=0$ and $ r(0)=1$.
In this case, \eqref{LL.5} takes the form
\begin{equation}\label{LL.5w}
 || S (t)|| _{\mathcal L(\cal H)}\le  \frac{\| (\Phi^2-\Phi'^2)^{\frac{1}{2}}_-\,m\|_{L^2(]0,t[)} \, \| \Psi^2 -( \Psi ')^2)^{\frac{1}{2}}_-\, m\|_{L^2(]0,t[)}  }
{\int_0^t (\Phi^2-(\Phi ')^2)^{\frac{1}{2}}_{+}\,((\iota_t\Psi)^2-((\iota_t \Psi)
    ')^2)^{\frac{1}{2}}_{-} ds}\,.
\end{equation}
Replacing $(\Phi, \Psi)$ by $(\lambda \Phi,\mu \Psi)$ give for any $(\lambda,\mu)\in( \mathbb R\setminus \{0\})^2$ does not change the right hand side. Hence we may  choose a suitable normalization without loss of generality. We also choose $\Phi$ and $\Psi$ to be piecewise $C^1([0,t]))$ (see Footnote \ref{fnpiecewise} for the definition).\\
Given some $t>a + b $, we now give the conditions satisfied by $\Phi$:
\begin{property}[$P_{a,b}$]~
  \begin{enumerate}
  \item $\Phi = e^a u $ on $]0,a]$ and $u\in \mathcal H:= \mathcal H_{0,a}^{0,1}$ (cf \eqref{red.2})\footnote{Here is our choice of normalization}\,.
  \item On $[a,t - b  ]$, we take  $\Phi(s) =e^{s}$, so
  $\Phi'^2(s) -\Phi(s)^2=0\,.$
  \item  On $[t-  b  , t]$ we take $\Phi(s)=e^{t-b }\theta (t-s)$ with $\theta \in \mathcal G=\mathcal{G}_b^1$ \,.
  \end{enumerate}
  \end{property}
  Hence, we have 
  $$
  {\rm Supp} ( \Phi^2 -\Phi'^2)_+\subset [ t-b , t]\,.
  $$
 Similarly we assume that $\Psi$ satisfies property $(P_{b,a})$ but
 with $\theta=1$,  hence
 \begin{enumerate}
 \item 
 $
 \Psi(s) = e^{b} v(s) \mbox{ on } ]0,b[  \mbox{ with }  v  \in \mathcal H_b \,,
 $ 
 where 
 $
\mathcal H_b:=  \mathcal H_{0,b}^{0,1} \,.
$
\item    On $[b,t-a  ]$, we take  $\Psi(s) =e^{s}$.
\item On $[t-a ,t ]$,  $\Psi (s) = e^{t-a}\,.$
\end{enumerate}

 Recalling the definition of $\iota_t$, we get for $\iota_t\Psi$:
  \begin{enumerate}
  \item On $[0,a]$, $\iota_{t} \Psi= e^{(t- a )}$, satisfying 
  $$(\iota_t \Psi) '^2 -(\iota_t \Psi)^2 =  - e^{2(t-a)}\,.$$
  \item On $[a,t -  b  ]$, we have  $\iota_t \Psi (s) =e^{ t-s}$, hence 
  $$(\iota_t \Psi)'(s)^2 -\iota_t \Psi(s)^2=0\,.$$
  \item  On $]t-   b  , t[$, we have 
  $$(\iota_t \Psi) '(s)^2 -(\iota_t \Psi)^2(s) \geq  0 \,.
  $$
  \end{enumerate}
 Recall our choice of $\epsilon_1=+$ and $\epsilon_2=-$.  Assuming that $t>a + b $, we have  under these assumptions on $\Phi$ and $\Psi$
$$  
  \{ s;\Phi(s)^2-(\Phi '(s))^2 >0,\iota_t \Psi(s)^2 -(\iota_t \Psi '(s))^2 < 0  \} \subset [ t-b,b]\,.
  $$
 We now compute or estimate the various quantities appearing in \eqref{LL.5w}.\\
  
We have
\begin{equation}\label{eq:Iphi}
  \| (\Phi^2-\Phi'^2)^{\frac{1}{2}}_-\,m\| = e^{a}\left(  \int_0^a
    (u'(s)^2 - u^2(s)) m(s)^2 ds
  \right)^{1/2}\,,
\end{equation}
\begin{equation}\label{eq:Ipsi}
\|(\Psi^2-(\Psi ')^2)^{\frac{1}{2}}_-m\| =  e^{b}\left( \int_0^{b }
  (v'(s)^2 - v(s)^2) m(s)^2 ds\right)^{1/2}
  \,,
\end{equation}
and 
\begin{equation}\label{eq:Ichi}
\begin{split}
&\hskip -1cm\int_0^t(\Phi^2-\Phi'^2)^{\frac{1}{2}}_+ ((\iota_t \Psi)^2- (\iota_t \Psi')^2 )^{\frac{1}{2}}_-ds \\ 
&=  \int_{t-b} ^t(\Phi^2-\Phi'^2)^{\frac{1}{2}}_+ ((\iota_t \Psi)^2-
  (\iota_t \Psi')^2)^{\frac{1}{2}}_-ds \\ 
 &= e^{t-b } \int_{t-b }^t   (\theta(t-s) ^2- \theta'(t-s)^2)_+^{\frac 12}  \,   ((\iota_t \Psi)^2- (\iota_t \Psi')^2)^{\frac{1}{2}}_- ds
\\ 
& = 
  e^{t} \int_{0}^{b }   (\theta (s) ^2- \theta'(s)^2)^\frac 12  (v'(s)^2 - v (s)^2)^{\frac{1}{2}} ds \,.
  \end{split}
\end{equation}
So we get from \eqref{LL.5w}
\begin{equation}\label{LL.5z}
 || e^t  S (t)|| _{\mathcal L(\cal H)}\le  e^{a+ b}   \left (\int_0^a (u'(s)^2 - u^2(s)) m(s)^2 ds\right)^\frac 12 \, K (b,\theta,v) \,,
\end{equation}
where 
\begin{equation}\label{Kbthetav}
 K (b,\theta,v) := \frac{ \left(\int_0^{b } (v'(s)^2 - v(s)^2) m(s)^2
     ds\right)^{1/2} }{ \int_{0}^{b }   (\theta (s) ^2- \theta'(s)^2)^\frac 12  (v'(s)^2 - v (s)^2)^{\frac{1}{2}} ds }\,.
 \end{equation}
 We start by considering for a given $\theta \in \mathcal G$
 $$
K_{\mathrm{inf}} (b,\theta) := \inf_{ v \in \mathcal H_b} K(b,\theta ,v)\,,
 $$
and  get  the following:
  \begin{lemma}\label{prop3.12} If $\theta\in \mathcal G$ and $\theta
    -\theta '$ is not identically $0$ on $]0,b$[, we have
\begin{equation}\label{idkinfbtheta}
K_{\mathrm{inf}} (b,\theta)  = \frac{1}{\sqrt{\int_0^{b }   (\theta (s) ^2- \theta'(s)^2) \frac {1}{ m^2} ds}} \,.
\end{equation}
\end{lemma}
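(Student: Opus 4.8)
The plan is to read \eqref{idkinfbtheta} as the Cauchy--Schwarz inequality together with a discussion of its equality case, in the same spirit as the computation of \eqref{eq:gp3} and as Step~3 in the proof of Proposition~\ref{red1}. First I would record the sign information that makes all the square roots in $K(b,\theta,v)$ legitimate: every $v\in\mathcal H_b$ satisfies $0\le v\le v'$, so $v'^2-v^2\ge 0$, and every $\theta\in\mathcal G$ satisfies $|\theta'|\le\theta$, so $\theta^2-\theta'^2\ge 0$. In particular $v'^2-v^2$ is not a.e.\ zero for $v\in\mathcal H_b$ (otherwise $v'=v$ with $v(0)=0$ forces $v\equiv0$, contradicting $v(b)=1$).

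For the lower bound I would, for fixed $v\in\mathcal H_b$, factor the denominator of $K(b,\theta,v)$ as
$$\int_0^b (\theta^2-\theta'^2)^{1/2}(v'^2-v^2)^{1/2}\,ds=\int_0^b \frac{(\theta^2-\theta'^2)^{1/2}}{m}\cdot m\,(v'^2-v^2)^{1/2}\,ds,$$
and apply Cauchy--Schwarz in $L^2(]0,b[)$ to the two displayed factors:
$$\int_0^b (\theta^2-\theta'^2)^{1/2}(v'^2-v^2)^{1/2}\,ds\le\Bigl(\int_0^b \frac{\theta^2-\theta'^2}{m^2}\,ds\Bigr)^{1/2}\Bigl(\int_0^b m^2(v'^2-v^2)\,ds\Bigr)^{1/2}.$$
Dividing, this gives $K(b,\theta,v)\ge\bigl(\int_0^b(\theta^2-\theta'^2)/m^2\,ds\bigr)^{-1/2}$ for every $v\in\mathcal H_b$ (with the obvious conventions: if the denominator of $K(b,\theta,v)$ vanishes then $K(b,\theta,v)=+\infty$ and the inequality is trivial). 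Taking the infimum over $v\in\mathcal H_b$ yields ``$\ge$'' in \eqref{idkinfbtheta}.

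For the reverse inequality I would exhibit a $v\in\mathcal H_b$ realizing equality in Cauchy--Schwarz. If $\int_0^b(\theta^2-\theta'^2)/m^2\,ds=0$, then $\theta^2=\theta'^2$ a.e., so $K(b,\theta,v)=+\infty$ for every $v$ and both sides of \eqref{idkinfbtheta} equal $+\infty$; so assume this integral is positive, and set $\phi:=(\theta^2-\theta'^2)/m^4\ge 0$, which lies in $L^1(]0,b[)$ (as $m$ is bounded below by a positive constant on $[0,b]$ and $\theta$ is bounded, $\theta'\in L^2$) and satisfies $\int_0^b\phi\,m^2\,ds=\int_0^b(\theta^2-\theta'^2)/m^2\,ds>0$, hence $\phi\not\equiv0$. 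The equality case of Cauchy--Schwarz requires $v'^2-v^2$ proportional to $\phi$, so, exactly as in \eqref{red.5} and Step~3 of the proof of Proposition~\ref{red1}, I would solve the Cauchy problem $\widetilde v'=\sqrt{\widetilde v^2+\phi}$, $\widetilde v(0)=0$; since $v\mapsto\sqrt{v^2+\phi(s)}$ is Lipschitz in $v$ with constant $1$, there is a unique (Carathéodory) solution $\widetilde v\in H^1(]0,b[)$ on all of $[0,b]$. Then $\widetilde v'=\sqrt{\widetilde v^2+\phi}\ge\widetilde v\ge 0$, so $\widetilde v$ is nondecreasing with $0\le\widetilde v\le\widetilde v'$, and $\widetilde v\not\equiv0$ (else $\sqrt\phi=\widetilde v'\equiv0$), whence $\widetilde v(b)>0$. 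Put $v:=\widetilde v/\widetilde v(b)\in\mathcal H_b$; then $v'^2-v^2=\phi/\widetilde v(b)^2$ is proportional to $\phi$, Cauchy--Schwarz is an equality for this $v$, and $K(b,\theta,v)=\bigl(\int_0^b(\theta^2-\theta'^2)/m^2\,ds\bigr)^{-1/2}$. Together with the lower bound this gives \eqref{idkinfbtheta}.

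The only genuinely delicate point is this last construction — checking that the ODE $\widetilde v'=\sqrt{\widetilde v^2+\phi}$, with $\phi$ merely in $L^1$ and possibly vanishing on an initial interval, still produces a global solution lying in $\mathcal H_b$ with $\widetilde v(b)>0$ — but this is handled exactly as in Step~3 of the proof of Proposition~\ref{red1}, the new ingredient being only the elementary estimate $\int_0^b\phi\,m^2\,ds>0$ that guarantees $\phi\not\equiv0$ (the standing hypothesis $\theta-\theta'\not\equiv0$ serving to exclude the degenerate profile $\theta(s)=e^{s-b}$). Everything else is routine. If one prefers to avoid the Carathéodory subtleties altogether, it suffices — since we only need an infimum — to build a minimizing sequence in $\mathcal H_b$ by approximating $\phi$ from below by smooth strictly positive functions and integrating, again following the proof of Proposition~\ref{red1}.
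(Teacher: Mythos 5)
Your proof is correct and follows essentially the same route as the paper's: the identical Cauchy--Schwarz factorization $(\theta^2-\theta'^2)^{1/2}(v'^2-v^2)^{1/2}=\bigl((\theta^2-\theta'^2)^{1/2}/m\bigr)\cdot\bigl(m(v'^2-v^2)^{1/2}\bigr)$ for the lower bound, and the same Cauchy problem $w'=\sqrt{w^2+(\theta^2-\theta'^2)m^{-4}}$, $w(0)=0$, normalized by $w(b)>0$, to realize the equality case. Your extra care about the degenerate case $\int_0^b(\theta^2-\theta'^2)m^{-2}\,ds=0$ and about the regularity of the ODE's right-hand side is a welcome tightening of the same argument, not a different one.
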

\begin{proof}
Inspired by the proof  in Subsection \ref{ss3.6}, we consider
with $$h(s)=  (\theta (s) ^2- \theta'(s)^2) ^\frac 12  \geq 0 $$ the
denominator in (\ref{Kbthetav}), 
$$
\int_0^b h(s)  (v'(s) ^2 - v(s)^2)^{\frac{1}{2}} ds =\int_0^b ( h(s)/m(s) ) \,(   m(v'(s) ^2 - v(s)^2)^{\frac{1}{2}}) ds  \,.
$$
By the Cauchy-Schwarz inequality, we have
\begin{multline*}
\int_0^b h(s) (v'(s) ^2 - v(s)^2)^{\frac{1}{2}} ds \leq \\ \left(
  \int_0^b ( h(s)/m(s) )^2ds\right)^\frac{1}{2} \, \left(\int_0^b ( m(s)^2(v'(s) ^2 -
  v(s)^2) ds \right) ^\frac 12 \,,
\end{multline*}
which implies that $K_{\mathrm{inf}}(b,\theta )$ is
  bounded from below by the right hand side of (\ref{idkinfbtheta}).

 We have equality for some $ v$ in $\mathcal H_b$ if and only if 
$$
 m(s) (v'(s)^2 - v(s)^2)^{\frac{1}{2}} = c \,  h(s)/m(s)  \,.
 $$
 for some constant $c>0$. In order to get such a $v$,
we first consider $w \in H^1$ defined by
$$
w' = \sqrt{ w^2  + h^2m^{-4}}\,,\, w(0)=0\,,
$$
noticing that the right hand side of the differential
  equation is Lipschitz continuous in $w$, so that the
  Cauchy-Lipschitz theorem applies.
According to our assumption on $\theta$, we verify that $w(b) >0$ 
 and we choose
 $$
v = \frac{1}{w(b)} w\,,\,c = \frac{1}{w(b) }\,.
$$
For this pair $(c,v)$ we get
\begin{equation}\begin{split}
&\left(\int_0^{b } (v'(s)^2 - v(s)^2) m(s)^2 ds\right)^\frac 12 \Big/
  \int_{0}^{b }  \left(\theta (s) ^2- \theta '(s)^2 \right) ^\frac 12
  \left(v'(s)^2 - v(s)^2\right)^{\frac{1}{2}} ds\\
  &=1 \big/\left( \int_0^b h^2(s)  m^{-2}(s)  \, ds \right)^\frac 12   \,.
\end{split}
\end{equation}
Returning to the definition of $h$ shows that
  $K_\mathrm{inf}(b,\theta )$ is bounded from above by the right hand
  side of (\ref{idkinfbtheta}) and we get the announced result.
\end{proof}
 
 To conclude the proof of  Proposition \ref{propminmax}, we just
   combine Lemma \ref{prop3.12}  and   \eqref{LL.5z}.

    \appendix
\section{ Appendix: Optimization  with 
  $\epsilon_1=\epsilon_2=+$}\label{++}\setcounter{equation}{0}

In this section we let $\epsilon _1=\epsilon _2=+$ in Theorem
\ref{Th1.2} and assume that  
$\mathrm{supp\,}(r(\omega )^2-\phi '^2)_-\subset [0,a]$,
$\mathrm{supp\,}(r(\omega )^2-\psi '^2)_-\subset[0,b]$ for some
$a,b>0$, where $\Phi =e^\phi $, $\Psi =e^\psi $.  The results we
  get in this case seem less decisive, but perhaps still of some interest.
Assuming, to start with, that $\phi $ and $\psi $ are given on
$[0,a]$ and $[0,b]$ respectively, we shall discuss how to choose $\phi
$ on $]a,+\infty [$ and $\psi $ on $]b,+\infty [$, for every given $t>a+b$,  in order to optimize
the estimate on $\| S(t)\|$. A later problem 
will be to choose $a,b$ with $a+b<t$ and the restrictions of
$\phi $, $\psi $ to $[0,a]$ and $[0,b]$ respectively. 

\par From (\ref{LL.5}) we get with $r=r(\omega )$,
\begin{equation}\label{++.1}
\|S(t)\|\le e^{\omega t} \frac{\| (r^2-\phi '^2)_-^{1/2}m\|_{\phi
    -\omega \cdot } \|
  (r^2-\psi '^2)_-^{1/2}m\|_{\psi -\omega \cdot }}{I(\phi ,\psi )},
\end{equation}
where
\begin{equation}\label{++.2}
I(\phi ,\psi )=I_{a,b,t}(\phi ,\psi )=\int_a^{t-b} e^{\phi +\iota
  \psi }(r^2-\phi '^2)_+^{1/2}(r^2-\iota \psi '^2)_+^{1/2} ds.
\end{equation}
Here we put $\iota_t \psi (s)=\psi
(t-s)$,  and write simply $\iota $ when the choice of $t$ is clear.
 We try to choose $\phi (s) $ for $s\ge a$ and $\psi (s)$
for $s\ge b$ so that $I(\phi ,\psi )$ is as large as possible. 
 Write
$$
\phi (s)=\phi (a)+\widetilde{\phi }(s-a),\ \psi (s)=\psi
(b)+\widetilde{\psi }(s-b).
$$
For $s\in [a,t-b]$, set $s=a+\widetilde{s}$, $0\le \widetilde{s}\le
  t-a-b$. Then with $\widetilde{t}=t-a-b$,
\begin{multline*}
\phi (s)+\iota \psi (s)=\phi (s)+\psi (t-s)=\phi
(a+\widetilde{s})+\psi (t-a-\widetilde{s})\\
=\phi (a)+\psi (b)+(\phi (a+\widetilde{s})-\phi (a))+(\psi
(b+(t-a-b)-\widetilde{s})-\psi (b)\\
=\phi (a)+\psi (b)+\widetilde{\phi }(\widetilde{s})+\widetilde{\psi }
(t-a-b-\widetilde{s})\\=
\phi (a)+\psi (b)+\widetilde{\phi }(\widetilde{s})+\iota
_{\widetilde{t}}\widetilde{\psi }(\widetilde{s})
\end{multline*}
and we get with $\widetilde{\iota } =\iota _{\widetilde{t}}$,
\begin{equation}\label{++.3}\begin{split}
I(\phi ,\psi )=e^{\phi (a)+\psi (b)}&\int_0^{\widetilde{t}}
e^{\widetilde{\phi }(\widetilde{s })+\widetilde{\iota }
  \widetilde{\psi }(\widetilde{s})}
(r^2-\widetilde{\phi }'^2)_+^{1/2}
(r^2-\widetilde{\iota } \widetilde{\psi
} '^2)_+^{1/2}d\widetilde{s}\\
=& e^{\phi (a)+\psi (b)}I_{0,0,\widetilde{t}}(\widetilde{\phi
},\widetilde{\psi }).
\end{split}
\end{equation}
We wish to choose $\widetilde{\phi }$, $\widetilde{\psi }$ with
$\widetilde{\phi }(0)=\widetilde{\psi }(0)=0$ such that
$\widetilde{I}(\widetilde{\phi } ,\widetilde{\psi
})=I_{0,0,\widetilde{t}}(\widetilde{\phi },\widetilde{\psi })$ is as
large as possible. 

Drop the tildes for a while. The problem is then to choose $\phi $, $\psi $
with $\phi (0)=\psi (0)=0$ such that 
\begin{equation}\label{++.4}
I(\phi ,\psi )=\int_0^t e^{\phi +\iota \psi }(r^2-\phi '^2)_+^{1/2}
(r^2-\iota \psi '^2)_+^{1/2} ds
\end{equation}
is as large as possible. 

At this moment we do not know how to solve this
general problem, so we restrict the class of functions (satisfying
$\phi (0)=\psi (0)=0$) by requiring that
\begin{equation}\label{++ny.1}
\phi +\iota \psi =\mathrm{Const.}\hbox{ on }[0,t].
\end{equation}
In other words, we require that $(\iota \psi )'=-\phi '$. The constant
in (\ref{++ny.1}) is then equal to $\phi (t)=\int_0^t \phi
'(s)ds$. With $\|\cdot \|$ denoting the $L^2([0,t])$-norm, we get
\begin{equation}\label{++ny.2}
I(\phi ,\psi )=\exp \left(\int_0^t \phi '(s)ds\right)\int_0^t
(r^2-\phi '(s)^2)ds\le \exp (\sqrt{t}\|\phi '\|)(tr^2-\|\phi '\|^2),
\end{equation}
requiring also the $|\phi '|\le r$.
Here we have equality precisely when $\phi '$ is equal to some
constant $\alpha \in [0,r]$, so for any given value $\beta \in [0,r\sqrt{t}]$ of
$\|\phi '\|$, we should choose
\begin{equation}\label{++.5}
   \phi '=\alpha, \ \
\phi (s)=\psi (s)=\alpha s,\hbox{ with }
\sqrt{t}\alpha =\beta .
\end{equation}
The corresponding maximal value of
$I(\phi ,\psi )$ is given by
\begin{equation}\label{++.6}
J(\alpha, t)=te^{\alpha t}(r^2-\alpha ^2)
\end{equation}
We look for the maximum of this function of $\alpha $:
$$
\partial _\alpha J(\alpha ,t)=-t^2e^{\alpha t}\left(\alpha ^2+\frac{2}{t}\alpha -r^2 \right)
$$
The two critical points are given by a local maximum at 
\begin{equation}\label{++.7}
\alpha _+=\alpha _+(t)=\frac{1}{t}(\sqrt{1+(rt)^2}-1)\in ]0,r[
\end{equation}
and a local minimum at
$$
\alpha _-=\alpha _-(t)=-\frac{1}{t}(\sqrt{1+(rt)^2}+1)<0.
$$
We see that $\alpha _+$ is a global maximum. The corresponding maximal
value is given by
\begin{equation}\label{++.7.5}
J_{\mathrm{max}}(t)=J(\alpha _+,t)=e^{\sqrt{1+(rt)^2}-1}\frac{2}{t}(\sqrt{1+(rt)^2}-1)
\end{equation}

{}

\par Let us compute the asymptotic behaviour of  $J_{\mathrm{max}}(t)$ when $t\to +\infty $: We get
$$\sqrt{1+(rt)^2}=rt+{\cal O}\left( \frac{1}{rt}
\right),\
\alpha _+=r-\frac{1}{t}+{\cal O}\left(\frac{1}{trt} \right),
$$
\begin{equation}\label{++.8}
J_{\mathrm{max}}(t)=\left(1+{\cal O}\left(\frac{1}{rt} \right)
\right)\frac{2}{e}e^{rt}r,\ rt\to +\infty .
\end{equation}
Here we recall that the new $t=\widetilde{t}$ is equal to $t-a-b$ for
the original $t$. Returning to the original $I(\phi ,\psi
)=I_{a,b,t}(\phi ,\psi )$ (cf.\ (\ref{++.3})) with ${{\phi }_\vert}_{[0,a]}$, ${{\psi
  }_\vert}_{[0,b]}$ prescribed, we get with the choice
\begin{equation}\label{++.9}
\begin{cases}
\phi (s)-\phi (a)=\alpha _+(s-a),\ s\ge a,\\
\psi (s)-\psi (b)=\alpha _+(s-b),\ s\ge b,
\end{cases}
\end{equation}
\begin{equation}\label{++.10}
\alpha
_+=\alpha _+(\widetilde{t}),\ \ \widetilde{t}=t-a-b, \end{equation}
that 
\begin{equation}\label{++.11}
  \begin{split}
    &I(\phi ,\psi )=J_{\mathrm{max}}(t-a-b)e^{\phi (a)+\psi (b)}\\
    & =\left(1+{\cal O}\left(\frac{1}{t-a-b} \right)
\right) \frac{2}{e} e^{\phi (a)+\psi (b)}
\, r \, e^{r(t-a-b)},\ \ t-a-b\to +\infty ,
\end{split}
\end{equation}
when $r=r(\omega )>0$ is fixed.

\par Summing up the discussion so far, we get from (\ref{++.1}),
(\ref{++.11}):
\begin{prop}\label{++2}
Let $a,b>0$ and let $\Phi \in C([0,a]; \mathbb R)$, $\Psi \in C([0,b])$
be increasing, piecewise $C^1$ with $\Phi (0)=\Psi (0)=0$,
$$
\begin{cases}
r(\omega )^2\Phi ^2-\Phi '^2\le 0,\hbox{ on }[0,a],\\
r(\omega )^2\Psi ^2-\Psi '^2\le 0,\hbox{ on }[0,b].
\end{cases}
$$
Write $\Phi = e^\phi $, $\Psi =e^\psi $, with $\phi $, $\psi $
real. Then for $t>a+b$, with $r=r(\omega )$,
\begin{equation}\label{++.12}
e^{-\omega t}\| S(t)\|\le \frac{\| (r^2\Phi ^2-\Phi
  '^2)^{\frac{1}{2}}_-m\|_{e^{\omega \cdot }L^2([0,a])}\| (r^2\Psi ^2-\Psi
  '^2) ^{\frac{1}{2}}_-m\|_{e^{\omega \cdot }L^2([0,b])}}{J_{\mathrm{max}}(t-a-b)\Phi
(a)\Psi (b)},
\end{equation}
where $J_{\mathrm{max}}(\widetilde{t})$ is given in (\ref{++.7.5}) and
has the asymptotics (\ref{++.8}). In particular for large values of 
$t-a-b$, 
\begin{equation}\label{++.13}
\begin{split}
e^{-\omega t}\| S(t)\|\le & \left(1+{\cal
  O}\left(\frac{1}{r(t-a-b)} \right) \right) \frac{e}{2r}e^{-r(t-a-b)}\times \\
& \frac{\| (r^2\Phi ^2-\Phi
  '^2) ^{\frac{1}{2}}_-m\|_{e^{\omega \cdot }L^2([0,a])}\| (r^2\Psi ^2-\Psi
  '^2) ^{\frac{1}{2}}_-m\|_{e^{\omega \cdot }L^2([0,b])}}{\Phi
(a)\Psi (b)}.
 \end{split}
\end{equation}
\end{prop}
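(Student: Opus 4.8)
The plan is to read off the two displayed inequalities directly from the master estimate (\ref{LL.5}) of Theorem~\ref{Th1.2}, specialized to $\epsilon_1=\epsilon_2=+$ and written in the logarithmic form (\ref{++.1})--(\ref{++.2}), after extending the given data $\Phi$ (defined on $[0,a]$) and $\Psi$ (defined on $[0,b]$) to all of $[0,+\infty[$ as prepared in (\ref{++.9})--(\ref{++.11}). First I would observe that, by the support hypotheses on $(r^2\Phi^2-\Phi'^2)_-$ and $(r^2\Psi^2-\Psi'^2)_-$, the two numerator factors in (\ref{++.1}) depend only on $\Phi|_{[0,a]}$ and $\Psi|_{[0,b]}$, and that, once the exponentially weighted $L^2$-norms are unwound, they are exactly the norms $\| (r^2\Phi^2-\Phi'^2)^{1/2}_-m\|_{e^{\omega\cdot}L^2([0,a])}$ and $\| (r^2\Psi^2-\Psi'^2)^{1/2}_-m\|_{e^{\omega\cdot}L^2([0,b])}$ appearing in (\ref{++.12}). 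Hence the whole effect of the extension is felt only through the denominator $I(\phi,\psi)=I_{a,b,t}(\phi,\psi)$ of (\ref{++.2}), and it suffices to exhibit one admissible extension for which $I(\phi,\psi)$ attains the value claimed in (\ref{++.11}).

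Next I would make the explicit affine choice (\ref{++.9}): for the given $t>a+b$ set $\widetilde t=t-a-b$, let $\alpha_+=\alpha_+(\widetilde t)\in ]0,r[$ be the interior critical point (\ref{++.7}), and extend by $\phi(s)-\phi(a)=\alpha_+(s-a)$ for $s\ge a$ and $\psi(s)-\psi(b)=\alpha_+(s-b)$ for $s\ge b$. Since then $\phi'^2=\psi'^2=\alpha_+^2<r^2$ on the relevant half-lines, $(r^2\Phi^2-\Phi'^2)_+$ is supported in $[a,+\infty[$ and $(r^2(\iota_t\Psi)^2-(\iota_t\Psi')^2)_+$ in $[0,t-b]$, so the integrand in (\ref{++.2}) is supported exactly on $[a,t-b]$, where it equals $e^{\phi(s)+\iota_t\psi(s)}(r^2-\alpha_+^2)$. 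The key elementary point, already used in (\ref{++.3}), is that on $[a,t-b]$ the exponent $\phi(s)+\iota_t\psi(s)$ equals the constant $\phi(a)+\psi(b)+\alpha_+\widetilde t$; integrating over this interval of length $\widetilde t$ gives
\[
I(\phi,\psi)=e^{\phi(a)+\psi(b)}\,\widetilde t\,e^{\alpha_+\widetilde t}\,(r^2-\alpha_+^2)=\Phi(a)\,\Psi(b)\,J(\alpha_+,\widetilde t),
\]
with $J$ as in (\ref{++.6}).

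Finally I would invoke the one-variable optimization of (\ref{++.6})--(\ref{++.7.5}), where $\alpha_+(\widetilde t)$ is identified as the global maximizer of $\alpha\mapsto J(\alpha,\widetilde t)=\widetilde t\,e^{\alpha\widetilde t}(r^2-\alpha^2)$; thus $J(\alpha_+,\widetilde t)=J_{\mathrm{max}}(\widetilde t)$ and $I(\phi,\psi)=\Phi(a)\Psi(b)\,J_{\mathrm{max}}(t-a-b)$, which is (\ref{++.11}). Since $t>a+b$ we have $J_{\mathrm{max}}(t-a-b)>0$, so inserting this into (\ref{++.1}) and dividing by $e^{\omega t}$ gives (\ref{++.12}); feeding the large-$\widetilde t$ expansion (\ref{++.8}) of $J_{\mathrm{max}}$ into (\ref{++.12}) yields (\ref{++.13}). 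I do not expect any genuine obstacle here: the conceptual work --- optimizing the estimate over the extension --- has already been carried out in the paragraphs leading to (\ref{++.11}), so the proof is just an assembly, the only points deserving care being the support bookkeeping that pins the product $(\,\cdot\,)^{1/2}_+(\,\cdot\,)^{1/2}_+$ to $[a,t-b]$ and the routine matching of the weighted $L^2$-norms in (\ref{++.1}) with those in (\ref{++.12}).
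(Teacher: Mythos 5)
Your proposal is correct and follows essentially the same route as the paper: the statement is obtained by inserting the explicit affine extension (\ref{++.9})--(\ref{++.10}) into the master estimate (\ref{++.1})--(\ref{++.2}), using that $\phi+\iota_t\psi$ is constant on $[a,t-b]$ to evaluate $I(\phi,\psi)=\Phi(a)\Psi(b)J(\alpha_+,t-a-b)=\Phi(a)\Psi(b)J_{\mathrm{max}}(t-a-b)$, and then appealing to (\ref{++.7.5})--(\ref{++.8}). Your support bookkeeping and the identification of the weighted norms are exactly the "summing up" the paper performs before stating the proposition.
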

Here we meet the same quantities as in the previous
  section. Hence we obtain (cf Theorem \ref{prop4.9}), if $a^{*} (m)$
  is bounded,  $r=1$, $\omega=0$ and  $a, b \leq a^{*}  (m)$, $t> a+b$, 
 \begin{equation}
 || e^t S(t)|| \leq   \left(1+{\cal
  O}\left(\frac{1}{(t-a-b)} \right) \right) \frac{e}{2} m(a) m(b) e^{a+b}  \psi_0(a)^\frac 12  \psi_0 (b)^\frac 12  \,.
\end{equation}
As $t\rightarrow +\infty$, we have lost a factor $(e/2)$ in comparison with the statement of Theorem \ref{prop4.9}. However it is not excluded that for  some $t$ the estimate obtained by this approach is better.

\paragraph{Non optimality. Possible improvements?}
We have solved the optimization problem for $I(\phi ,\psi )$ in
(\ref{++.4}) for $(\phi ,\psi )$ varying in a restricted class. The
purpose of this remark is to show that the solution $(\phi ,\psi )$ in
(\ref{++.5}) with $\alpha =\alpha _+$ is not a critical point for
$I(\phi ,\psi )$ when $(\phi ,\psi )$ varies more freely and hence we
can perturbe our special solution slightly (leaving the restriced
class) and find an even larger value of $I(\phi ,\psi )$.

\par Write $f=\iota \psi $ for simplicity. We then want to find $\phi
,f\in C^2([0,1])$ with
\begin{equation}\label{++1.1}
\phi (0)=f(t)=0,
\end{equation}
$\phi$ increasing, $f$ decreasing (i.e.\ $\phi '\ge 0$, $f'\le 0$)
with
\begin{equation}\label{++1.2}
r^2-\phi '^2>0,\ \ r^2-f'^2>0,
\end{equation}
such that $I(\phi ,\iota f)$ is as large as possible and in particular
such that $(\phi ,f)$ is a critical point for $I$. We make a
variational calculation considering infinitessimal variations
$(\phi +\delta \phi ,f+\delta f)$ with
$\delta \phi (0)=\delta f(t)=0$. Then
$$
\delta I(\phi ,\iota f)=\mathrm{I}+\mathrm{II}+\mathrm{III},
$$
where with
$
K(\phi ,f)(s):=e^{\phi +f}(r^2-\phi '^2)^{\frac{1}{2}}(r^2-f '^2)^{\frac{1}{2}}:
$
$$
\mathrm{I}=\int_0^t K(\phi ,f)(s)(\delta \phi (s)+\delta f(s))ds,
$$
$$
\mathrm{II}=\int_0^t K(\phi ,f)(s) \frac{\delta \left( (r^2-\phi
    '^2)^{1/2} \right)}
{(r^2-\phi '^2)^{1/2}} ds,
$$
$$
\mathrm{III}=\int_0^t K(\phi ,f)(s) \frac{\delta \left( (r^2-f
    '^2)^{1/2} \right)}
{(r^2-f'^2)^{1/2}} ds.
$$
Here,
$$
\delta \left( (r^2-\phi '^2)^{1/2} \right)=-(r^2-\phi '^2)^{-1/2}\phi
'\delta \phi '
$$
and similarly for $f$, so
$$
\mathrm{II}=-\int_0^t K(\phi ,f)(s)(r^2-\phi '^2)^{-1}\phi '\delta
\phi ' ds,
$$
$$
\mathrm{III}=-\int_0^t K(\phi ,f)(s)(r^2-f '^2)^{-1}f '\delta
f ' ds.
$$
Here we integrate by parts, using that $\delta \phi (0)=\delta
f(t)=0$:
$$
\mathrm{II}=\int_0^t \left( \partial _s\circ K(\phi ,f)(r^2-\phi
  '^2)^{-1}\circ \partial _s\phi  \right)\delta \phi ds
-K(\phi ,f)(r^2-\phi '^2)\phi '\delta \phi (t),
$$
$$
\mathrm{III}=\int_0^t \left( \partial _s\circ K(\phi ,f)(r^2-f
  '^2)^{-1}\circ \partial _sf  \right)\delta f ds
+K(\phi ,f)(r^2-f '^2)f '\delta f (0).
$$
This gives,
\[
  \begin{split}
\delta (\phi ,\iota f)=&\int_0^t\left( K(\phi ,f)+\partial _s\circ K(\phi ,f)(r^2-\phi
  '^2)^{-1}\circ \partial _s\phi  \right) \delta \phi ds\\
&-K(\phi ,f)(r^2-\phi '^2)\phi '\delta \phi (t)+\\
&\int_0^t\left( K(\phi ,f)+\partial _s\circ K(\phi ,f)(r^2-f
  '^2)^{-1}\circ \partial _sf  \right) \delta f ds\\
&+K(\phi ,f)(r^2-f '^2)f'\delta f (0).
\\
  \end{split}
\]
The Assumption (\ref{++1.2}) implies that
$K(\phi ,f)(r^2-\phi '^2)>0$, $K(\phi ,f)(r^2-f '^2)>0$ and we see
that $(\phi ,f)$ is a critical point precisely when
\begin{equation}\label{++1.3}
\phi '(t)=0,\ \ f'(0)=0,
\end{equation}
(in addition to (\ref{++1.1})) and
\begin{equation}\label{++1.4}
  \begin{cases}
K(\phi ,f)+\partial _s\circ K(\phi ,f)(r^2-\phi
  '^2)^{-1}\circ \partial _s\phi=0\\
K(\phi ,f)+\partial _s\circ K(\phi ,f)(r^2-f
  '^2)^{-1}\circ \partial _sf=0
\end{cases} \hbox{ on }[0,t].
\end{equation}
We conclude that $(\phi ,\psi )$ in (\ref{++.5}) with $\alpha =\alpha
_+$, is not a critical point for $I(\cdot ,\cdot \cdot )$, since it
does not satisfy (\ref{++1.2}). Hence by modifying $\phi $ slightly
near $s=t$, we can increase $I(\phi ,\psi )$ further.\\

{\bf Acknowledgements.}\\
The IMB receives support from the EIPHI Graduate School (contract ANR-17-EURE-0002).\\

\end{document}